\numberwithin{equation}{section}
\newcommand{\sref}[1]{Section~\ref{#1}}
\newcommand{\dref}[1]{Definition~\ref{#1}}
\newcommand{\assref}[1]{Assumption~\ref{#1}}
\renewcommand{\Re}{{\rm{I\!R}}}
\renewcommand{\k}{p} % polynomial degree
\newcommand{\D}{\Omega}
\newcommand{\dD}{\partial\D}
\newcommand{\E}{E} % mesh element E
\newcommand{\dE}{\partial\E} % boundary of E
\newcommand{\e}{e} % single edge of E
\newcommand{\s}{s} % single side of E
\newcommand{\edge}{s}
\newcommand{\patch}[1]{\widetilde{#1}}
\newcommand{\vertex}{{\bf z}}
\newcommand{\Vh}{V_h} % Global VEM space
\newcommand{\VhE}{V_h^\E} % Local VEM space
\newcommand{\dimVhE}{N_{\E}}
\newcommand{\NE}{\dimVhE} % dimension of VhE
\newcommand{\Vhs}{V_h^\s}
\renewcommand{\P}[2]{\mathcal{P}_{#2}(#1)} % macro for polynomial of specific degree over a specific domain
\newcommand{\PE}[1]{\P{\E}{#1}} % macro for polynomial space over element E of a specified degree.
\newcommand{\bigspace}{\ensuremath{\mathcal{W}_h}}
\newcommand{\biglocalspace}{\ensuremath{\bigspace^{\E}}} % Enlarged local VEM space which includes degree k and k-1 moments as dofs
\newcommand{\biglocalspaceS}{\ensuremath{\bigspace^{\s}}} % Enlarged local VEM space which includes degree k and k-1 moments as dofs
\newcommand{\NO}[1]{\mathcal{N}^{#1}}
\newcommand{\MO}[2]{\mathcal{M}^{#1}_{#2}}
\newcommand{\mindex}{\boldsymbol{\alpha}}
\newcommand{\intE}{\int_\E} % over element E
\renewcommand{\d}{\operatorname{d}}
\newcommand{\dx}{\d\vx}
\newcommand{\ds}{\d\!s}
\newcommand{\ints}{\int_{\s}} % over single side s
\newcommand{\edgeint}{\ints}
\newcommand{\vx}{\bm{x}}
\renewcommand{\u}{u}
\renewcommand{\v}{v}
\newcommand{\w}{w} % a virtual element function
\newcommand{\interp}{\operatorname{I}} % to signify the interpolant of a function
\newcommand{\wI}{{\w_{\interp}}} % Interpolant of w
\newcommand{\wIs}{{\w_{\interp}^\s}} % Interpolant of w
\newcommand{\vI}{{\v_{\interp}}}
\newcommand{\vIs}{{\v_{\interp}^\s}}
\newcommand{\vIdE}{{\v_{\interp}^{\dE}}}
\newcommand{\vClem}{\v_{c}}
\newcommand{\uh}{\u_h}
\newcommand{\vh}{\v_h}
\newcommand{\wh}{\w_h} 
\newcommand{\m}{m} % monomial
\newcommand{\ma}{\m_{\alpha}} % monomial
\renewcommand{\(}{\left(} % Properly scaled left bracket
\renewcommand{\)}{\right)} % Properly scaled right bracket
\newcommand{\abs}[1]{{\lvert#1\rvert}} % Make the brackets the correct height
\newcommand{\norm}[1]{{\lVert#1\rVert}} % Make the brackets the correct height
\newcommand{\triplenorm}[1]{| \kern -.4mm \|{#1}\| \kern -.4mm |}
\newcommand{\jump}[1]{{\left\llbracket#1\right\rrbracket}}
\newcommand{\Id}{\operatorname{I}} % identity
\newcommand{\dof}{\operatorname{dof}}
\newcommand{\Po}[1]{\Pi^0_{#1}} % L2 projector. Argument should be the polynomial degree
\newcommand{\Pos}[1]{\Pi^{0,\s}_{#1}} % L2 projector on the edge e. Argument should be the polynomial degree
\newcommand{\spaceProj}[1]{\ensuremath{\Pi^{*}_{#1}}} % The abstract projector computable from low order dofs used in the space definitions
\newcommand{\apprx}[1]{{{#1}_h}}
\newcommand{\diff}{\boldsymbol{\kappa}} % Diffusion coefficient: if you change me, change accordingly \diffNB below
\newcommand{\diffh}{\apprx{\boldsymbol{\kappa}}} % Diffusion coefficient: if you change me, change accordingly \diffNB below
\newcommand{\diffNB}{\kappa} % Diffusion coefficient non bold!
\newcommand{\conv}{\boldsymbol{\beta}} % Convection coefficient
\newcommand{\convh}{\apprx{\boldsymbol{\beta}}} % Convection coefficient
\newcommand{\reac}{{\gamma}} % Reaction coefficient
\newcommand{\reach}{\apprx{\gamma}} % Reaction coefficient
\newcommand{\reacSym}{\mu} % the reaction coefficient with the divergence of the
\newcommand{\force}{f}
\newcommand{\forceh}{\apprx{f}}
\newcommand{\A}{A} % full bilinear form
\newcommand{\Ah}{\A_h} % VEM approx of full bilinear form
\renewcommand{\AE}{\A^\E} % bilinear form A on element E
\newcommand{\AhE}{\A_h^\E} % bilinear form A_h on element E
\renewcommand{\a}{a} % diffusion part of bilinear form
\newcommand{\aE}{\a^\E} % a on the element E
\newcommand{\ahE}{\a_h^\E} % a_h on the element E
\renewcommand{\b}{b} % convection part of bilinear form
\newcommand{\bE}{\b^\E} % bilinear form b on element E
\newcommand{\bhE}{\b_h^\E} % bilinear form b_h on element E
\newcommand{\Sta}{S} % stabilising term for diffusion term
\newcommand{\StaE}{\Sta^{\E}} % stabilising term for diffusion term on element E
\newcommand{\altStaE}{\mathcal{I}^{\E}}
\newcommand{\Th}{\mathcal{T}_h} % the triangulation
\newcommand{\Edges}{\mathcal{S}_h} % Edges
\newcommand{\InternalEdges}{\mathcal{S}_h^{\text{int}}} % boundary edges
\newcommand{\BoundaryEdges}{\mathcal{S}_h^{\text{bdry}}} % internal edges
\newcommand{\nuE}{\nu_{\E}}
\newcommand{\subtriang}{\widehat{\Th}} % global subtriangulation of the mesh
\newcommand{\meshReg}{\rho} % mesh regularity constant
\newcommand{\elipLower}{\diffNB_*} % lower const in the ellipticity condition for the diff coefficient
\newcommand{\elipLowerE}{\diffNB_*^{\E}} % lower const in the ellipticity condition for the diff coefficient
\newcommand{\elipUpper}{\diffNB^*} % upper const in the ellipticity condition for the diff coefficient
\newcommand{\elipUpperE}{\diffNB^*_{\E}} % upper const in the ellipticity condition for the diff coefficient
\newcommand{\reacLower}{\mu_0} % lower bound for the reac coeff
\newcommand{\reacLowerE}{\mu_0^{\E}} % lower bound for the reac coeff on E
\newcommand{\spacedim}{d} % space dimension
\newcommand{\n}{\bm{n}} % normal vector
\newcommand{\reg}{m}
\newcommand{\LTWO}{L^2}
\newcommand{\err}{e}
\newcommand{\elresid}{R_{\E}}
\newcommand{\elresidd}{R_{\E'}}
\newcommand{\edgeresid}{J_{\edge}}
\newcommand{\dataresid}{\theta}
\newcommand{\edgedataresid}{\dataresid_{\edge}}
\newcommand{\eldataresid}{\dataresid_{\E}}
\newcommand{\eldataresidd}{\dataresid_{\E'}}
\newcommand{\edgepatch}{\omega_{\edge}}
\newcommand{\elpatch}{\omega_{\E}}
\newcommand{\estimator}{\eta}
\newcommand{\dataest}{\Theta}
\newcommand{\virtualosc}{\Psi}
\newcommand{\nonPolyResid}{B}
\newcommand{\stabest}{\mathfrak{S}}
\newcommand{\Cbub}{C_{\operatorname{bub}}}
\newcommand{\Cproj}{C_{\operatorname{proj}}}
\newcommand{\Cclem}{C_{\operatorname{Clem}}}
\newcommand{\CclemT}{\hat{C}_{\operatorname{Clem}}}
\newcommand{\Ctrace}{C_{\operatorname{tr}}}
\newcommand{\stabConst}{s}
\newcommand{\Cstab}{C_{\operatorname{stab}}}
\newcommand{\CequivUp}{\widehat{C}_{\operatorname{equiv}}}
\newcommand{\Cequiv}{C_{\operatorname{equiv}}}
\newcommand{\Cinv}{C_{\operatorname{inv}}}
\newcommand{\Cpf}{C_{\operatorname{PF}}}
\newcommand{\Cp}{C_{\operatorname{P}}}
\newcommand{\bubEl}{\psi_{\E}}
\newcommand{\bubEdge}{\psi_{\edge}}
\newcommand{\pecletE}{\operatorname{Pe}_{\E}}
\newtheorem{remark}[theorem]{Remark}
\newtheorem{assump}[theorem]{Assumption}
\definecolor{gold}{rgb}{1.0, 0.84, 0.0}
\newcommand{\errClem}{\err_{\interp}}
\renewcommand{\Re}{\mathbb{R}}
\begin{document}

\title{A Posteriori Error Estimates for the Virtual Element Method}
\author{Andrea Cangiani \and Emmanuil H.~Georgoulis  \and Tristan Pryer \and Oliver J. Sutton}

\date{}

\maketitle

\begin{abstract}
  An posteriori error analysis for the virtual element method (VEM) applied to general elliptic problems is presented. The resulting error estimator is of residual-type and applies on very general polygonal/polyhedral meshes.
The estimator is fully computable as it relies only on quantities available from the VEM solution, namely its degrees of freedom and element-wise polynomial projection.
Upper and lower bounds of the error estimator with respect to the VEM approximation error are proven.
The error estimator is used to drive adaptive mesh refinement in a number of test problems.
Mesh adaptation is particularly simple to implement since elements with consecutive co-planar edges/faces are allowed and, therefore, locally adapted meshes do not require any local mesh post-processing.
\end{abstract}

\section{Introduction}

The virtual element method (VEM) is a numerical framework
introduced in~\cite{BasicsPaper} for the approximation of solutions
of partial differential equations (PDEs). Key attributes of
VEM are its ability to permit the
use of meshes with very general polygonal/polyhedral elements~\cite{EquivalentProjectors,ArbitraryRegularityVEM,Hitchhikers,MixedVEM,NonconformingVEM,General,UnifiedVEM} and the seamless incorporation of approximation spaces with arbitrary global regularity~\cite{ArbitraryRegularityVEM}.
There has been a strong interest in recent years in the development of numerical methods on general polygonal/polyhedral meshes~\cite{mimetic,PFEM,BasicsPaper,HHO,dg1,dg2,Giani-Houston:2014,Chen-Wang-Ye:2014,Cockburn-DiPietro-Ern:2015,Droniou:Eymard-Herbin:2015,Weisser:2015}, not least due to the potential appeal of such mesh generality in the context of Lagrangian and/or adaptive refinement/coarsening algorithms.
The virtual element method revolves around a \emph{virtual element space} of trial and test functions, defined implicitly
through local PDE problems on each element.
The local spaces are designed to contain a space of (physical frame) polynomials, ultimately responsible
for the accuracy of the method, as well as a complementary space of more general
non-polynomial functions.
In this respect, VEM belongs to the wide family of Generalised FEM~\cite{GFEM}, as do other approaches to general meshes such as the Polygonal FEM~\cite{PFEM},
numerical multiscale methods (see, e.g.~\cite{MFEM,HMM,LOD} and the references therein), and, so-called, Trefftz-type methods
in general~\cite{babuska_osborn,XFEM,hiptmair_moiola_perugia,Rjasanow-Weisser:2014}.  Quite differently from all the above
approaches, though, the virtual element method does \emph{not} require
the evaluation of non-polynomial functions, even
in a rough fashion.
Instead, to produce \emph{fully computable} and
accurate VEM formulations on general meshes, the method's degrees of
freedom are carefully chosen so that relevant projections of the
local virtual element functions into the local subspace of
polynomials are computable.
A crucial consequence of this approach is that the VEM computed solution is not available
in the form of a (virtual element) function. Rather, the
solution is represented via the values of its degrees of freedom, from which we can access, for instance, the piecewise polynomial projection of the corresponding complete virtual element function.

Given the virtual nature of the method, the design and analysis of
fully computable a posteriori error bounds for VEM is a
challenging task.  In~\cite{VEMApostM3NA},
a posteriori error bounds for the $C^1$-conforming VEM for the two-dimensional Poisson problem are proven. The
$C^1$-continuity of the VEM space was employed to circumvent the fact that the inter-element normal fluxes of the virtual basis functions are not computable in the more standard $C^0$-conforming method.
Furthermore, the analysis of~\cite{VEMApostM3NA} relies on a Cl\'ement-type interpolant construction requiring quadratic (or higher-order) virtual element spaces.
To the best of our knowledge,~\cite{VEMApostM3NA} is the only a
posteriori error analysis for VEM currently available in the
literature.

In this work, we present a new residual-type a posteriori error
analysis for the $C^0$-conforming VEM introduced in~\cite{UnifiedVEM} for the discretization of
second order linear elliptic reaction-convection-diffusion
problems with non-constant coefficients in two and three dimensions. We circumvent the
fact that the VEM solution normal fluxes are not computable by replacing them by a
suitable projection of the fluxes instead, resulting in the introduction of \emph{virtual inconsistency} terms in the a posteriori error estimator to account for this replacement error.
Moreover, the analysis is based on a new Cl\'ement-type VEM interpolant in two and three dimensions, which, crucially, allows for minimal regularity interpolation by \emph{linear} VEM functions. This new interpolant, which may be of independent interest, is constructed starting from the standard finite element Cl\'ement interpolant on a regular sub-triangulation; cf. also~\cite{SteklovVEM} for a related idea for a two-dimensional VEM interpolant. 
In two spatial dimensions, the resulting constants in the Cl\'ement interpolation estimate
are dependent on the respective FEM Cl\'ement interpolant on a regular sub-triangulation, which are in principle available~\cite{Verfurth:1999,veeser_verfurth:2012}, along with other computable quantities. In the three-dimensional case, when general polygonal element interfaces are present in the mesh, a second, not easily computable in general, constant appears; see Remark \ref{rem:constants} for a detailed discussion.
Once equipped with the above developments, a posteriori bounds are derived by careful treatment of the inconsistency terms, whereby appropriate projection operators are introduced into the discrete problem formulation; we refer to~\cite{Tristan:2016} for a related general framework for a posteriori analysis of inconsistent discontinuous Galerkin methods.
Lemma~\ref{lemma_lower_bound_incon} gives a lower bound for these inconsistency terms, indicating that they are of correct order up to data oscillation.
Although the focus of this work is the VEM introduced in~\cite{UnifiedVEM}, the proof of the a posteriori error bounds is quite general and can be adapted in a straightforward manner to other VEM approaches, such
as the VEM proposed in~\cite{General}, cf. the discussion in Section~\ref{sec:conclusion} below.

Adaptive mesh refinement driven by a posteriori error estimators is a
well established tool for the efficient numerical solution of PDEs exhibiting local, numerically challenging,
solution features.
In this context, the extreme mesh flexibility allowed by the VEM
approach offers a number of potential advantages. For instance,
locally adapted meshes do not require any local post-processing: very
general polygonal/polyhedral meshes are admissible due to the physical
frame polynomial subspaces included in the VEM space, therefore
removing any restrictions posed by maximum angle conditions or
mesh-distortion, as is the case for standard adaptive FEM. Moreover,
VEM avoids the need to introduce additional degrees of freedom for
hanging node/face removal (`green refinement') during mesh refinement: hanging nodes
introduced by the refinement of a neighbouring element are simply
treated as new nodes since adjacent co-planar elemental interfaces are
perfectly acceptable. Furthermore, in the VEM context, coarsening
becomes trivial and inexpensive to implement as node removal does not
necessitate any further local mesh modification. The latter is
particularly attractive in the context of numerical solution of
evolution PDEs where mesh-coarsening is standard practice to track evolving fronts and singularities efficiently. Indeed, apart from making
mesh change straightforward to implement, the mesh flexibility offered
by VEM may have the potential to provide complexity reduction with
respect to standard FEM on traditional simplicial or box-type
meshes. At the time of writing, no results in this direction are available.

The remainder of this work is structured as follows. In
Section~\ref{sec:contProb}, we describe the model problem and in \sref{sec:vemFramework} we introduce the virtual element method.
Some fundamental approximation results are presented in Section~\ref{subsec:approximation-properties}, which are used to prove upper and lower bounds for an a posteriori error estimator in \sref{sec:aposteriori}.
This estimator is then used in \sref{sec:numerics} within an automatic mesh adaptivity algorithm in a series of numerical examples, confirming numerically its optimality. Finally, in \sref{sec:conclusion} we give some concluding remarks.

Below, we shall use standard notation for the relevant function
spaces. For a Lipschitz domain $\omega \subset {\mathbb R}^d$,
$d=2,3$, we denote by $H^s(\omega)$ the Hilbert space of index $s\ge
0$ of real--valued functions defined on $\omega$, endowed with the
seminorm $|\cdot |_{s,\omega}$ and norm $\|\cdot\|_{s,\omega}$;
further $(\cdot,\cdot)_\omega$ stands for the standard $L^2$
inner-product. Finally, $|\omega|$ denotes the $d$--dimensional
Hausdorff measure of $\omega$.

\section{The Continuous Problem}
\label{sec:contProb}
Let $\D \subset \Re^\spacedim$ be a polygonal domain for $\spacedim=2$
or a polyhedral domain for $\spacedim=3$ and consider the linear
second order elliptic boundary value problem
\begin{equation}
\label{eq:pde}
\begin{split}
  -\nabla\cdot(\diff(\vx)\nabla \u)
  +
  \conv(\vx) \cdot \nabla \u
  +
  \reac(\vx) \u
  &=
  \force(\vx)\phantom{0} \quad\text{in}~\D,
  \\
  \u &= 0\phantom{f(\vx)}\quad\text{on}~\dD.
\end{split}
\end{equation}
We assume that $ \reac, \force\in L^{\infty}(\D)$, $\conv\in
[W^{1,\infty}(\D)]^d$ and that $\diff\in [L^{\infty}(\D)]^{d\times d}$
is a strongly elliptic symmetric diffusion tensor, i.e. there exist
$\elipLower,\elipUpper > 0$, independent of $\vec{v}$ and $\vx$, such
that
\begin{equation}
  \elipLower \abs{\vec{v}(\vx)}^2
  \leq
  \vec{v}(\vx) \cdot \diff(\vx) \vec{v}(\vx)
  \leq
  \elipUpper \abs{\vec{v}(\vx)}^2,
  \label{eq:diffEllipticity}
\end{equation}
for almost every $\vx \in \D$ and for any $\vec{v} \in
[H^1_0(\D)]^{\spacedim}$, with $\abs{\cdot}$ denoting the standard
Euclidean norm on $\Re^\spacedim$.  Finally, we assume that, for
almost every $\vx \in \D$, there exists a constant $\reacLower$ such
that
\begin{equation}
	\reacSym(\vx) := \reac(\vx) - \frac{1}{2} \nabla \cdot \conv(\vx) \geq \reacLower > 0.
	\label{eq:reacBound}
\end{equation}
Problem~\eqref{eq:pde} can be written in variational form:
Find $\u \in H^1_0(\D)$ such that
\begin{equation}
  (\diff \nabla \u, \nabla \v)
  +
  (\conv \cdot \nabla \u, \v)
  +
  (\reac \u, \v)
  =
  (\force, \v)
  \quad \quad
  \forall \v \in H^1_0(\D),
  \label{eq:origVariationalForm}
\end{equation}
with $(\cdot,\cdot)$ denoting the (standard) $\LTWO$ inner-product
over $\D$.  Following~\cite{UnifiedVEM}, we split the bilinear form on
the left-hand side of~\eqref{eq:origVariationalForm} into its
symmetric and skew-symmetric parts
\begin{subequations}
  \begin{align}
    \a(\u,\v)
    &:=
    (\diff \nabla \u, \nabla \v)+\(\reacSym \u, \v\), \label{eq:a:def}
    \\
    \b(\u,\v)
    &:=
    \frac{1}{2} \left[ \(\conv \cdot \nabla \u, \v\)
      -
      \(\u, \conv \cdot \nabla \v\) \right],\label{eq:b:def}
  \end{align}
\end{subequations}
and we consider the problem written in the equivalent form: find $\u
\in H^1_0(\D)$ such that
\begin{equation}
  A(u,v)
  :=\,
  \a(\u,\v) + \b(\u,\v)
  =
  (\force, \v) \quad \quad
  \forall \v \in H^1_0(\D).
  \label{eq:newVariationalForm}
\end{equation}
Rewriting the bilinear form in this fashion
is a useful step in view of preserving the coercivity of $A$ at the virtual (discrete) level,  independently of the mesh size.
An alternative VEM based on the original variational form~\eqref{eq:origVariationalForm} and \emph{without} assuming coercivity is presented in~\cite{General}, whose well-posedness relies on selecting sufficiently small mesh size. The a posteriori error analysis presented below can be also applied with to the method of~\cite{General}, with minor modifications, cf. Section~\ref{sec:conclusion}.

\section{The Virtual Element Method}
\label{sec:vemFramework}

\subsection{Polygonal and polyhedral partitions}
Let $\{\Th\}$ be a family of partitions of the domain $\D$ into
non-overlapping \emph{simple polygonal/polyhedral elements} with
maximum size $h$; a polygon/polyhedron is termed simple when its
boundary is not self-intersecting.  We further assume that the
boundary $\partial E$ of each element $\E\in\Th$ is made of a
uniformly bounded number of \emph{interfaces}: line segments if $d=2$ and planar polygons with a uniformly bounded number of straight edges if $d=3$.
Elemental interfaces are either part of the boundary of $\Omega$ or shared
with another element in the decomposition.
By $\s$ we shall denote the generic $(\spacedim-1)$-dimensional mesh
interface (either an edge when $\spacedim = 2$, or a face when
$\spacedim = 3$) of a mesh element $\E\in\Th$; the set of all mesh
interfaces in $\Th$ will be denoted by $\Edges$, which is subdivided
into the set of boundary interfaces $\BoundaryEdges := \{ \s \in
\Edges : \s \subset \dD \}$ and the set of internal interfaces
$\InternalEdges := \Edges \setminus \BoundaryEdges$. Also, $\nuE$ will
be the (uniformly bounded) number of interfaces $\s\in\dE$.

We note that, in particular, partitions including non-convex elements
are allowed, as also are elements with consecutive co-planar
edges/faces, such as those typical of locally refined meshes with
hanging nodes. We also make the following mesh regularity assumptions
which are standard in this context,
cf.~\cite{EquivalentProjectors,UnifiedVEM}.
\begin{assump}{(Mesh regularity).}
  \label{ass:meshReg}
  We assume the existence of a constant $\meshReg > 0$
  such that
  \begin{enumerate}
  \item Every element $\E$ of $\Th$ is star-shaped with respect
    to a ball of radius $\meshReg h_{\E}$;
  \item For every element $\E$ of $\Th$ and every interface $\s$
    of $\E$, $h_{\s} \geq \meshReg h_{\E}$;
  \item For $\spacedim=3$, every interface $\s \in \Edges$
    viewed as a $2$-dimensional element satisfies assumptions 1 and 2 above.
  \end{enumerate}
\end{assump}

\begin{remark}[Global shape regularity]
  \label{rem:shapeRegSubTri}
As in the a priori setting~\cite{BasicsPaper,UnifiedVEM}, 
the a posteriori error analysis presented below extends in a straightforward fashion to the case of polygonal/polyhedral elements which result from simply connected finite union of sub-elements  each satisfying Assumption~\ref{ass:meshReg}.
Moreover, the extension of the VEM a priori error analysis recently presented in~\cite{VEM-stability} for the $\spacedim=2$ case, indicates that it may be possible to relax the condition on the size of the interfaces.
This hypothesis is explored  numerically in Section~\ref{sec:numerics}. 
Therein, by not imposing any restrictions on the size of the edges in the mesh, we show that the performance of the method or the estimators are not affected in practice.
\\
  An immediate consequence of the above, simplifying, mesh regularity 
assumptions is
  that each element $\E$ admits a sub-triangulation 
$\Th^{\E}$, a partition of $\E$ into triangles when 
$\spacedim=2$ and tetrahedra when $\spacedim=3$, in such a 
way that the resulting global triangulation 
$\subtriang := \bigcup_{\E\in \Th} \Th^{\E}$ is shape 
regular.  
For $\spacedim=2$ this is obtained by joining each vertex 
of $\E$ with a point with respect to which $\E$ is 
starred. For $\spacedim=3$ the same procedure can be
applied starting from the corresponding triangulation of 
each face.
\end{remark}

Throughout the paper, we denote by $\Po{\ell} : \LTWO(\E) \rightarrow
\PE{\ell}$ the $\LTWO(\E)$-orthogonal projection onto the
space $\PE{\ell}$ of polynomials with total degree $\ell$, for any $\E\in\Th$ and $\ell
\in\mathbb{N}\cup\{0\}$.

\subsection{Virtual element spaces}
\label{sec:vemSpaces}
We begin by recalling the construction of the conforming virtual
element space from~\cite{UnifiedVEM}. For each $\Th$ and $\k\in {\mathbb N}$, we shall construct a \emph{virtual element space} $\Vh\subset H^1_0(\D)$ of order $\k$ in an
element-wise fashion; $\Vh$ will be of order $\k\in {\mathbb N}$ if,
for each element $\E \in \Th$, the space $\VhE := \Vh|_{\E}$ contains
the space $\PE{\k}$ of polynomials of degree $\k$ on $\E$.  In
general, the space $\VhE$ will also contain non-polynomial functions.
However, the distinctive idea of VEM is that of \emph{computability} based on degrees of freedom, which stems from the
view that the complement of $\PE{\k}$ in $\VhE$ is made up of
functions which are deemed expensive to evaluate.
\begin{definition}[Computability]
  \label{def:computable}
  A term is \emph{computable} if it may be evaluated using the data of
  the problem, the degrees of freedom, and the polynomial component of
  the virtual element space only.
\end{definition}

We shall consider two types of degrees of freedom: nodal values and
polynomial
moments.
\begin{definition}[Degrees of freedom]
  \label{def:dofs}
  Let $\omega\subset\Re^\spacedim$, $1\le \spacedim\le 3$, be an $\spacedim$-dimensional
  polytope, that is, a line segment, polygon, or polyhedron,
  respectively. For any regular enough function $v$ on $\omega$, we define the following sets of \emph{degrees of freedom}:
  \begin{itemize}
  \item $\NO{\omega}$ are the \emph{nodal values}. For a vertex $\vertex$
    of $\omega$, $\NO{\omega}_\vertex (v):=v(\vertex)$ and
    $\NO{\omega}:=\{\NO{\omega}_\vertex: \vertex \text{ is a
      vertex}\}$;
  \item $\MO{\omega}{l}$ are the \emph{polynomial moments} up to order
    $l$. For $l\ge
    0$,
    \begin{equation*}
      \MO{\omega}{\mindex}(v)=\frac{1}{\abs{\omega}} (v, \ma)_\omega
      \quad \text{ with}\quad m_{\mindex}
      :=
      \(\frac{ \vx - \vx_{\omega}}{h_{\omega}}\)^{\mindex} \text{ and}\quad \abs{\mindex}\le l,
    \end{equation*}
    where $\mindex$ is a multi-index with $\abs{\mindex} := \alpha_1
    +\cdots +\alpha_\spacedim$ and $x^{\mindex} := x_1^{\alpha_1} \dots
    x_\spacedim^{\alpha_\spacedim}$ in a local coordinate system, and $x_\omega$
    denoting the barycentre of $\omega$. Further,
    $\MO{\omega}{l}=\{\MO{\omega}{\mindex}:\abs{\mindex}\le l\}$. The
    definition is extended to $l=-1$ by setting
    $\MO{\omega}{-1}:=\emptyset$.
  \end{itemize}
\end{definition}

The local virtual element space is constructed recursively in space
dimensions. We first consider the case $\spacedim=2$.
On each edge interface $s\in\dE$ we take
$\Vhs:=\P{\s}{\k}$ and we define the auxiliary space $\biglocalspace$
as
\begin{equation}\label{eq:biglocalspace}
  \biglocalspace
  :=
  \left\{ \vh \in H^1(\D) : \Delta \vh \in \PE{\k} \text{ and } \vh|_{\s} \in \Vhs \text{ for all } \s\in\dE\right\},
\end{equation}
noting that $\PE{\k} \subset\biglocalspace\subset C^0(\overline{\E})$. The elements of $\biglocalspace$ can be uniquely identified by the
following set of degrees of freedom~\cite{EquivalentProjectors,UnifiedVEM}:
\begin{equation*}
\text{DoF}(\biglocalspace)
:=
\NO{\E}
\cup
\{\MO{\s}{\k-2} \text{ for each edge interface } \s\in\dE\}
\cup
\MO{\E}{\k}\}.
\end{equation*}
These degrees of freedom make the terms
$\Po{\k} \vh$ and $\Po{\k-1} \nabla \vh$ computable for any $\vh \in
\biglocalspace$~\cite{UnifiedVEM}; for instance, the projection $\Po{\k} \vh$ is given \emph{directly} by the internal degrees of freedom $\MO{\E}{\k}$.

The crucial property $\PE{\k} \subset\biglocalspace$ would still be satisfied by the smaller space obtained by requiring $ \Delta \vh \in \PE{\k-2}$ instead of
$ \Delta \vh \in \PE{\k}$ in~\eqref{eq:biglocalspace}. This is, indeed, the \emph{original} virtual element space introduced in~\cite{BasicsPaper}. However, since the elemental projection $\Po{\k}$ is \emph{not} computable in the original VEM space of~\cite{BasicsPaper}, a \emph{different} subspace of $\biglocalspace$  with the same dimension as the original space of~\cite{BasicsPaper} was introduced~\cite{EquivalentProjectors}. In the latter subspace definition, the $\LTWO$-projection onto $\PE{\k}$ is computable using the extra higher-order moments.

The crucial observation is that, in the polynomial space
$\PE{\k}\subset\biglocalspace$, the moments in $\MO{\E}{\k}\setminus \MO{\E}{\k-2}$ are redundant.
Hence, it is possible to construct $L^2$-stable projection operators
$\spaceProj{\k}:\biglocalspace\rightarrow \PE{\k}$ which \emph{only} depend
on the \emph{reduced} set of degrees of freedom
\begin{equation}\label{eq:dofs}
\text{DoF}(\VhE)
:=
\NO{\E}
\cup
\{\MO{\s}{\k-2} \text{ for each edge interface } \s\in\dE\}
\cup
\MO{\E}{\k-2}.
\end{equation}
In particular, and following~\cite{UnifiedVEM},  this can simply be taken as the  projection corresponding to the Euclidean inner product on $\text{DoF}(\VhE)$. This is, indeed, the choice used in all numerical tests presented in Section~\ref{sec:numerics}.

Given any such projection operator $\spaceProj{\k}$, we can
define a local virtual element space $\VhE\subset\biglocalspace$ by
clamping the internal higher-order moments:
\begin{equation}
  \label{eq:localVEMSpace}
  \VhE := \Big\{ \vh \in \biglocalspace :
\(\vh, m_\alpha\)_{\E}
  =
  \(\spaceProj{\k}\vh, m_\alpha\)_{\E}  \,\,\, \forall m_\alpha \text{ with } \k-1\le|\alpha|\le \k \Big\}.
\end{equation}
By construction, the space $\VhE$ is identified by the  degrees of
freedom
$\text{DoF}(\VhE)$ of~\eqref{eq:dofs},
used to define $\spaceProj{\k}$.
A counting argument shows that the cardinality of the above sets of
degrees of freedom is $\NE = \nuE + \nuE N_{1,\k-2} + N_{2,\k-2}$, where $N_{\spacedim, k} := \dim \P{\Re^{\spacedim}}{k}$.
Representative examples are illustrated in
Fig.~\ref{fig:dofs:hexa:conf-VEM}.
\begin{figure}[!t]
  \centering
  \begin{tabular}{cccc}
    \includegraphics[width=0.2\textwidth]{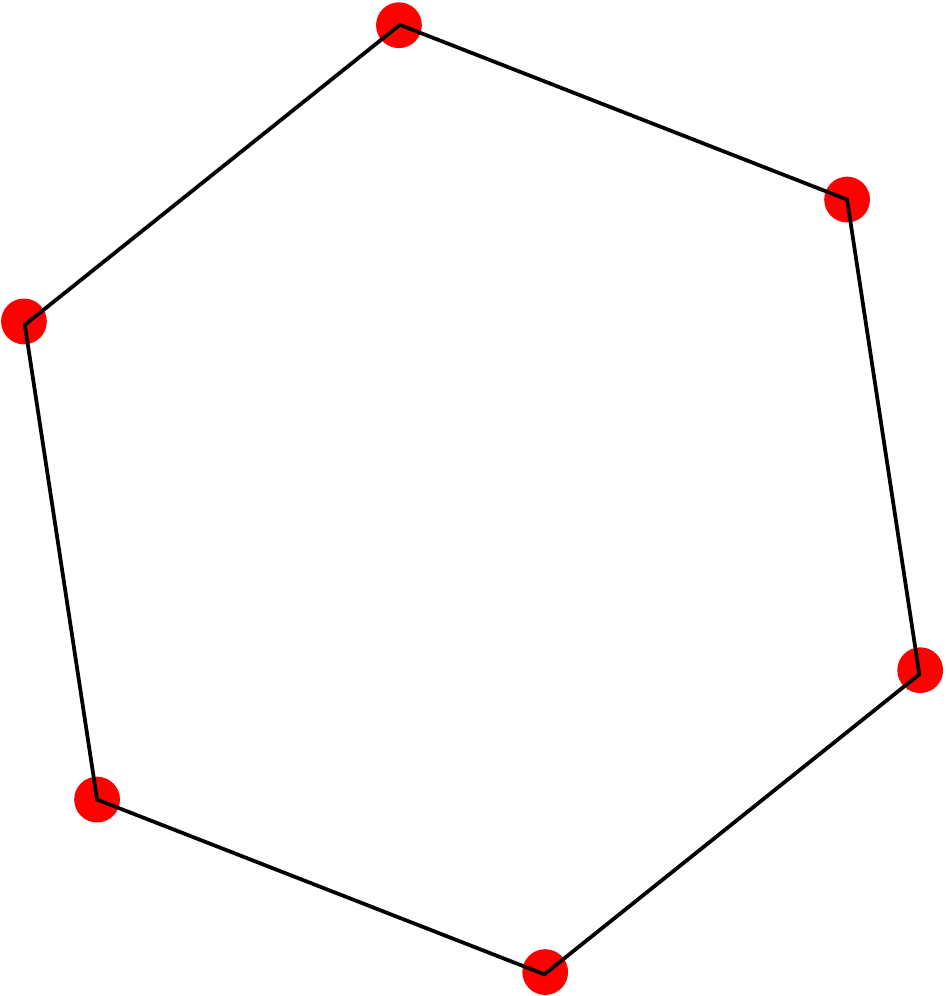} &\quad
    \includegraphics[width=0.2\textwidth]{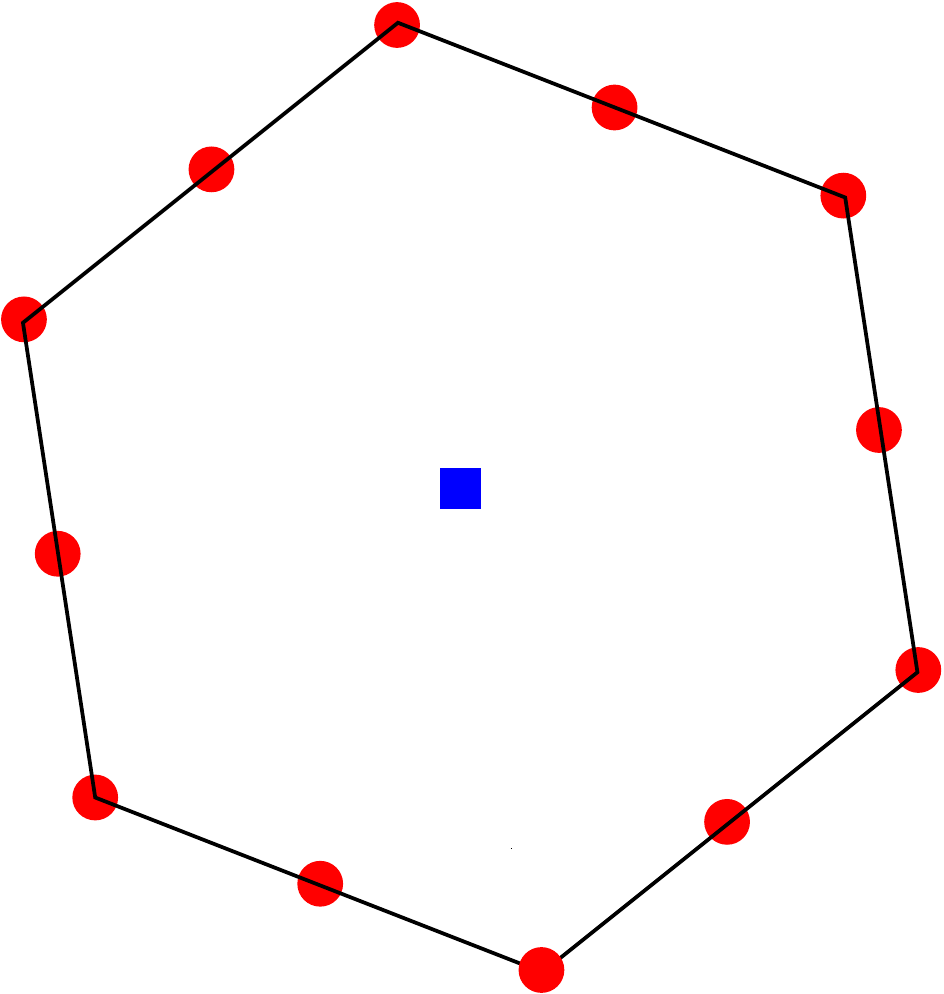} &\quad
    \includegraphics[width=0.2\textwidth]{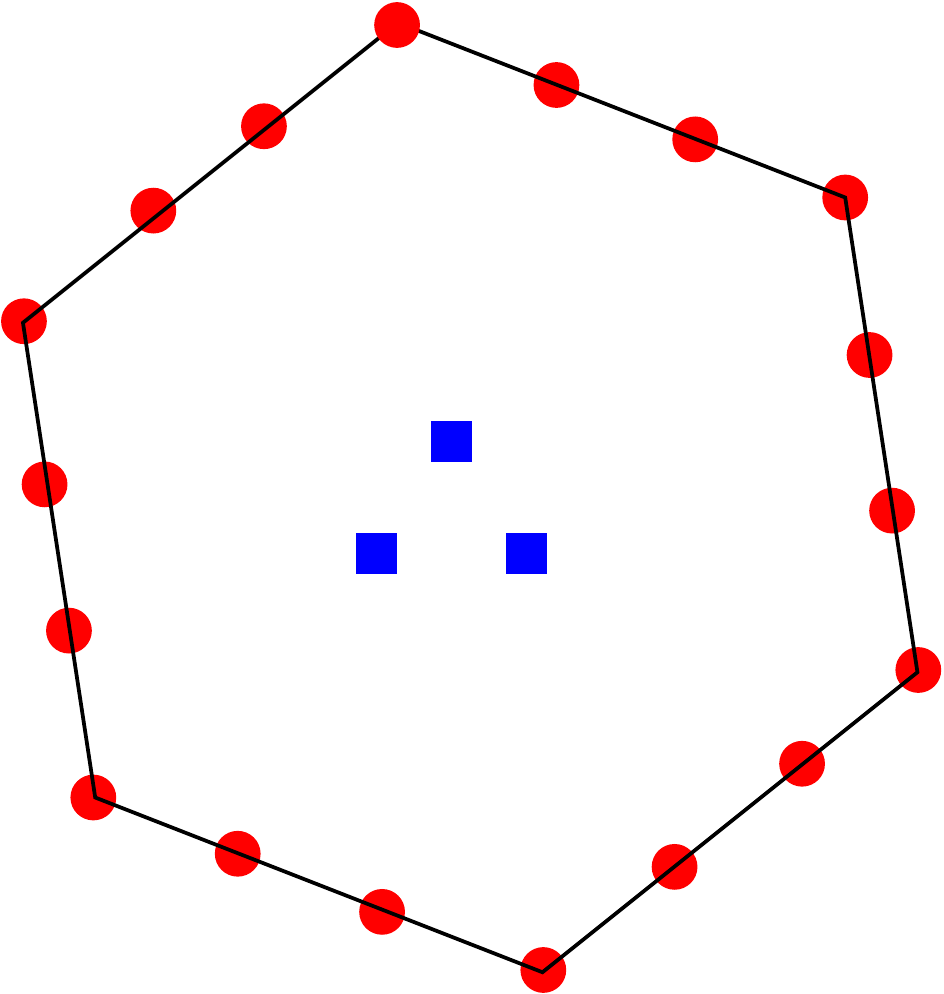} &\quad
    \includegraphics[width=0.2\textwidth]{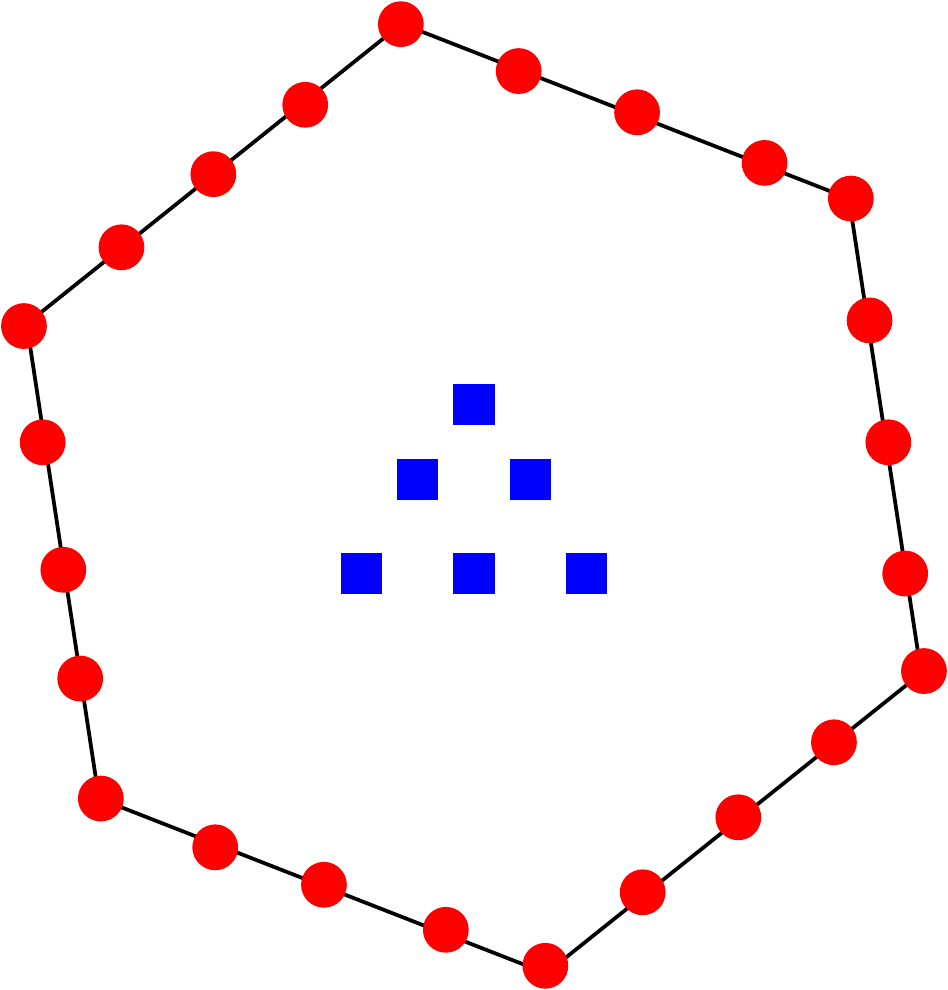} \\
    $\mathbf{\k=1}$ & $\mathbf{\k=2}$ & $\mathbf{\k=3}$ & $\mathbf{\k=4}$
  \end{tabular}
  \caption{The VEM degrees of freedom for a hexagonal mesh element for
    $\k=1,2,3,4$; nodal values and edge moments are marked by a
    circle; internal moments are marked by a square. The number of moments per edge is given by $N_{1,\k-2}=0,1,2,3$ and the number of internal moments is given by  $N_{2,\k-2}=0,1,3,6$ for $\k=1,2,3,4$, respectively.}
  \label{fig:dofs:hexa:conf-VEM}
\end{figure}
Further, we note that it is also possible to compute $\Po{\k} \vh$ and
$\Po{\k-1} \nabla \vh$ for each $\vh\in\VhE$
just from the reduced set of degrees of freedom since we can access the higher order moments through $\spaceProj{\k}$; we refer to~\cite[Section 4.1]{UnifiedVEM} for the details.

For the case $\spacedim=3$, we first (re-)define $\Vhs$ on each face
$s\in\dE$ to be the $2$-dimensional virtual element space given
by~\eqref{eq:localVEMSpace}. The construction of the local virtual
element space on $\E$ now follows by defining the
auxiliary space $\biglocalspace$ of~\eqref{eq:biglocalspace} and final
space $\VhE$ of~\eqref{eq:localVEMSpace} in exactly the same way as the 2-dimensional case.
In the 3-dimensional case, $\VhE$ is identified by the following set of degrees of freedom~\cite{UnifiedVEM}:
\begin{equation}\label{eq:dofs3d}
\text{DoF}(\VhE)
:=
\NO{\E}
\cup
\{\MO{\e}{\k-2} \text{ for each edge } \e\in\dE\}
\cup
\{\MO{\s}{\k-2} \text{ for each face } \s\in\dE\}
\cup
\MO{\E}{\k-2}.
\end{equation}
Therefore, the dimension of the
local space for $\spacedim = 3$ is $\NE = \nuE''+\nuE'N_{1,\k-2}+\nuE
N_{2,\k-2}+N_{3,\k-2}$ where $\nuE''$ and $\nuE'$ denote,
respectively, the number of vertices and edges of $\E$, cf.~\cite{UnifiedVEM}.
Representative examples are illustrated in
Fig.~\ref{fig:dofs:cube:conf-VEM}.
\begin{figure}[!t]
  \centering
  \begin{tabular}{cccc}
    \includegraphics[width=0.2\textwidth]{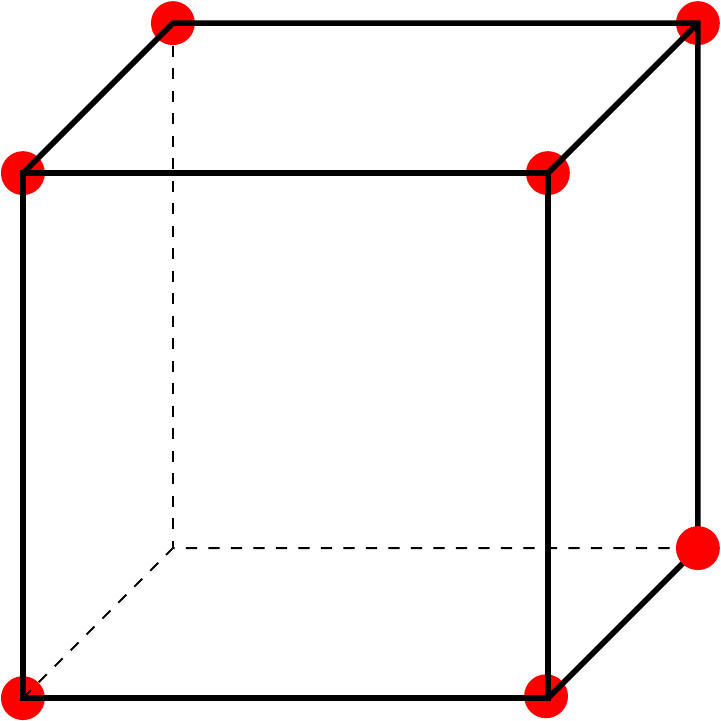} &\quad
    \includegraphics[width=0.2\textwidth]{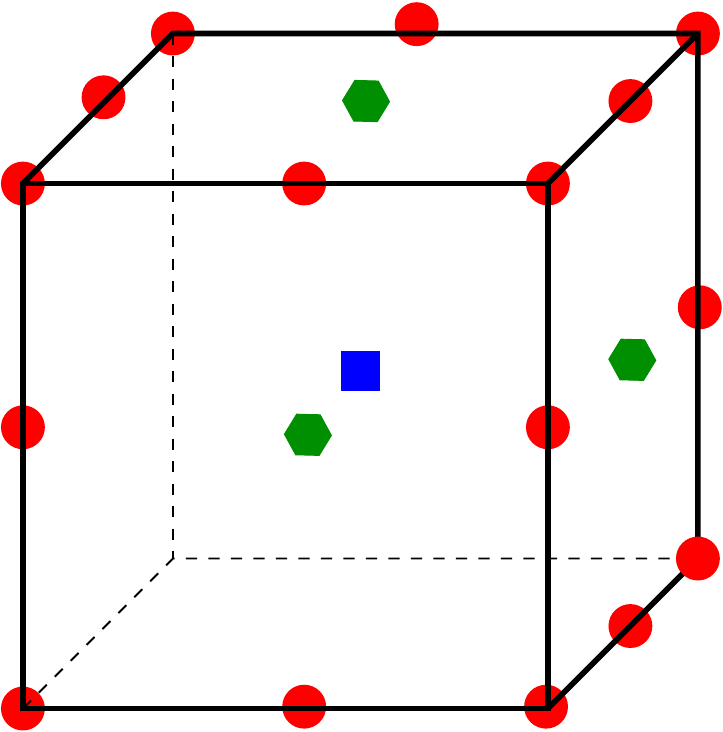} &\quad
    \includegraphics[width=0.2\textwidth]{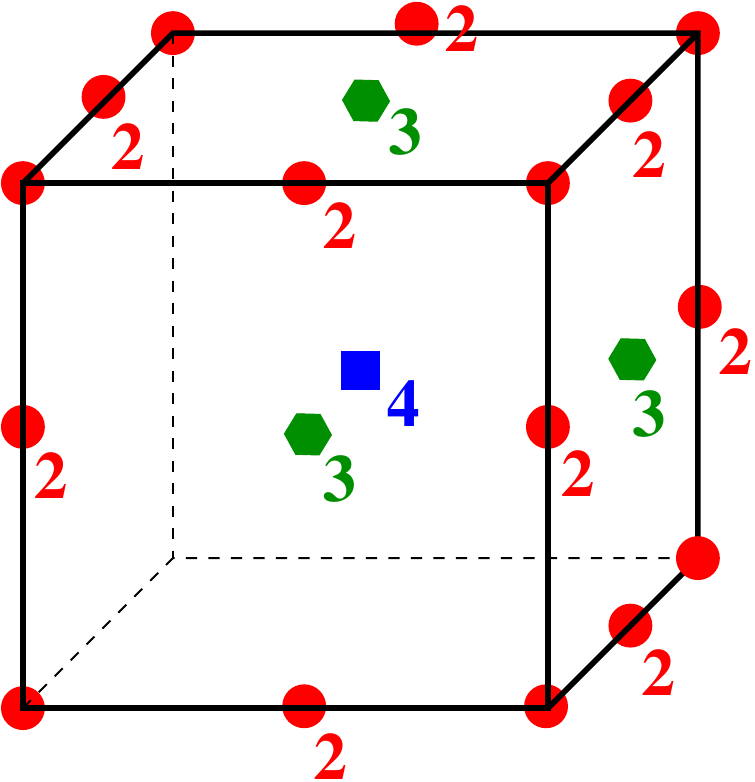} &\quad
    \includegraphics[width=0.2\textwidth]{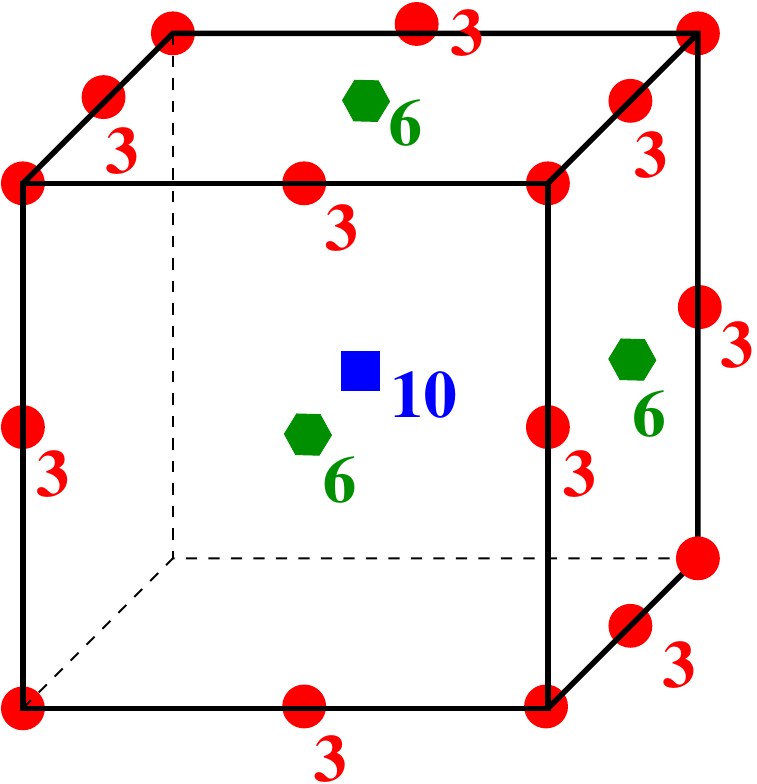} \\
    $\mathbf{\k=1}$ & $\mathbf{\k=2}$ & $\mathbf{\k=3}$ & $\mathbf{\k=4}$
  \end{tabular}
  \caption{The VEM degrees of freedom for a cubic mesh
	element for $\k=1,2,3,4$; nodal values and edge moments are marked
	by a circle; face moments are marked by a hexagon; internal moments are
    marked by a square. Only the internal degrees of freedom and those
	of the visible faces and edges are marked. The numeric labels
    indicate the number of degrees of freedom when there are more than
    $1$. The number of moments per edge is given by $N_{1,\k-2}=0,1,2,3$, the number of interface moments is given by  $N_{2,\k-2}=0,1,3,6$, and the number of internal moments is given by  $N_{3,\k-2}=0,1,4,10$ for $\k=1,2,3,4$, respectively.}
  \label{fig:dofs:cube:conf-VEM}
\end{figure}

Finally, the global space is constructed from these local spaces as
\begin{equation}\label{eq:GlobalSpace}
  \Vh := \left\{ \vh \in H^1_0(\D) : \vh |_{\E} \in \VhE \quad \forall \E \in \Th \right\},
\end{equation}
and the global degrees of freedom are obtained by
collecting the local ones, with the nodal and interface degrees of
freedom corresponding to internal entities counted only once.
Those on the boundary are fixed to be equal to zero in accordance with the
ambient space $H^1_0(\D)$.

\subsection{Discrete formulation}
We shall now recall the VEM for~\eqref{eq:newVariationalForm} introduced in~\cite{UnifiedVEM}.
For every $\E\in\Th$, let $\aE$ and $\bE$ be the elemental continuous
forms obtained by restriction of the forms in~\eqref{eq:a:def}
and~\eqref{eq:b:def} onto the element $\E$, respectively.  A virtual
bilinear form $\Ah: \Vh \times \Vh \rightarrow \Re$, is constructed
elementwise as
\begin{equation*}
  \Ah(\uh, \vh) = \sum_{\E \in \Th} \AhE(\uh, \vh)=\sum_{\E \in \Th}\ahE(\uh, \vh) + \bhE(\uh, \vh),
\end{equation*}
for any $\uh, \vh \in \Vh$. Here, $\AhE$ is a bilinear form over the
space $\VhE$, which is split into  the symmetric and
skew-symmetric discrete bilinear forms $\ahE$ and $\bhE$
corresponding to the continuous forms $\aE$ and $\bE$,
respectively.
To define $\AhE$ precisely, we begin by introducing the concept of \emph{admissible stabilising forms}.

\begin{definition}[Admissible stabilising forms]
  \label{def:admissibleStabilisingTerm}
  Let $\E\in\Th$. Two computable (in the sense of \dref{def:computable}),
  symmetric, and positive definite bilinear forms
  $\StaE_{1},\StaE_{0}:\VhE / \PE{\k}\times\VhE / \PE{\k}\rightarrow
  \Re$ are said to be local \emph{admissible} bilinear forms for
  stabilising the diffusion and reaction terms in~\eqref{eq:a:def} if
  they respectively satisfy
  \begin{align*}
    \Cstab^{-1} \intE |\sqrt{\diff} \nabla \vh|^2 \dx \le  &\StaE_{1}(\vh, \vh) \le \Cstab \intE |\sqrt{\diff} \nabla \vh|^2 \dx, \\%\qquad
    \Cstab^{-1} \intE\reacSym \vh^2 \dx \le &\StaE_{0}(\vh, \vh)\le \Cstab \intE \reacSym \vh^2 \dx,
  \end{align*}
  for all $\vh\in \VhE / \PE{\k}$ for some constant $\Cstab$
  independent of $\E$ and $h$.
\end{definition}
A practical choice of admissible stabilising bilinear forms is given
in~\eqref{eq:scaling}. We note here a
trivial consequence of the above definition which will be useful in
the analysis: for all $\vh\in \VhE/ \PE{\k}$, we have
\begin{align}
  \norm{\nabla
    \vh }_{0,\E}^2
  &\le \frac{\Cstab}{\elipLowerE }  \StaE_{1}(
  \vh,
  \vh),
  \label{eq:bounddiff}
  \qquad
  \norm{
    \vh }_{0,\E}^2 \le \frac{\Cstab}{\reacLowerE }  \StaE_{0}(
  \vh,
  \vh),
\end{align}
where $\elipLowerE, \reacLowerE$ are the local counterparts of
$\elipLower, \reacLower$, respectively.

A \emph{virtual element stabilising term} $\StaE$ may then be defined as
the linear combination of any pair of admissible diffusion and
reaction stabilising forms:
\begin{align*}
  \StaE:= \stabConst_1 \StaE_{1} + \stabConst_0 \StaE_{0},
\end{align*}
with $\StaE_{1}$, $\StaE_{0}$ admissible stabilising bilinear forms
and $\stabConst_1$, $\stabConst_0$ positive constants. We prefer to
keep the dependence of the stabilising form on
the constants $\stabConst_1$ and $\stabConst_0$ explicit, to be able to study
their influence on the constants in the a posteriori bounds below.

For every $\E\in\Th$, the \emph{local symmetric} and
\emph{skew-symmetric discrete bilinear forms} $\ahE$ and $\bhE$ are
defined by
\begin{align}
  \ahE(\uh, \vh) &:= (\diff \Po{\k-1} \nabla
  \uh, \Po{\k-1} \nabla \vh)_{\E} + (\reacSym \Po{\k} \uh, \Po{\k}
  \vh) + \StaE((\Id - \Po{\k})\uh, (\Id - \Po{\k})\vh),
  \label{eq:ahE:def}
  \\
  \bhE(\uh, \vh) &:= \frac{1}{2} \left[ (\conv \cdot \Po{\k-1}\nabla \uh, \Po{\k}\vh) - (\Po{\k} \uh, \conv \cdot \Po{\k-1} \nabla \vh)\right],
  \label{eq:bhE:def}
\end{align}
respectively, for all $\uh, \vh\in \VhE$. Notice that all of the terms
in~\eqref{eq:ahE:def} and~\eqref{eq:bhE:def} are computable since
$\Po{\k}\vh$ and $\Po{\k-1}\nabla \vh$ are computable for any $\vh \in
\VhE$, and $\StaE$ is computable by assumption.

\begin{remark}[Polynomial consistency and stability]
If $p,q\in\PE{\k}\subset\VhE$, then $\ahE(p, q)=\aE(p, q)$
and $\bhE(p, q)=\bE(p, q)$. This property is referred to as
\emph{polynomial consistency} in the VEM
literature~\cite{BasicsPaper}. Furthermore,
Definition~\ref{def:admissibleStabilisingTerm} ensures the following
\emph{stability} property: there exists a positive constant $\Cstab$,
independent of $h$ and the mesh element $\E$, such that
\begin{equation}\label{eq:stability}
  (\Cstab)^{-1} \aE(\vh, \vh) \leq \, %&
  \ahE(\vh, \vh) \leq \Cstab \aE(\vh, \vh),
\end{equation}
for all $\vh \in \VhE$. This, together with
the obvious identity $\bhE(\vh, \vh) = 0$ for all $\vh\in\VhE$ yields
the coercivity of $\Ah$. We refer to~\cite{UnifiedVEM} for the
details.
\end{remark}

The \emph{virtual element method} (VEM) then reads:
find $\uh \in \Vh$ such that
  \begin{equation}\label{eq:VEMproblem}
    \Ah(\uh, \vh)
	=(\forceh, \vh)\quad \forall \vh \in \Vh.
  \end{equation}
 where $\forceh := \Po{\k-1} \force$.

It may be shown that the problem~\eqref{eq:VEMproblem} possesses a
unique solution whenever~\eqref{eq:stability} is satisfied~\cite[Theorem 1]{UnifiedVEM}, along with optimal order a priori error bounds for the VEM solution in the $H^1$ and $\LTWO$ norms~\cite[Theorems 5 \& 6]{UnifiedVEM}.

\section{Approximation properties}
\label{subsec:approximation-properties}
The conforming virtual element space introduced above satisfies optimal
properties for approximating sufficiently smooth functions.
In particular, the theory in~\cite{BrennerScott} for star-shaped
domains may be used to prove the following theorem regarding the
approximation properties of the $L^2(\E)$-orthogonal projection to
polynomials.
\begin{theorem}[Approximation using polynomials]
  \label{thm:polynomialApproximation}
  Suppose that Assumption~\ref{ass:meshReg} is
  satisfied.
  Let $\E\in\Th$ and let $\Po{\ell} : \LTWO(\E) \rightarrow
  \PE{\ell}$, for $\ell\ge 0$, denote the $\LTWO(\E)$-orthogonal
  projection onto the polynomial space $\PE{\ell}$.
  Then, for any $\w \in H^{\reg}(\E)$, with $1 \leq \reg \leq \ell+1$,
  it holds
  \begin{equation*}
    \norm{\w - \Po{\ell}\w}_{0,\E} + h_{\E} \abs{\w - \Po{\ell}\w}_{1,\E} \leq \Cproj h_{\E}^\reg \abs{\w}_{\reg,\E}.
  \end{equation*}
  The positive constant $\Cproj$ depends only on the polynomial degree
  $\ell$ and the mesh regularity.
\end{theorem}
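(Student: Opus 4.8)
The plan is to reduce this polygonal/polyhedral estimate to the classical Bramble--Hilbert machinery, exploiting the star-shapedness guaranteed by Assumption~\ref{ass:meshReg}. The key structural fact is that each $\E$ is star-shaped with respect to a ball of radius $\meshReg h_{\E}$, which is precisely the hypothesis under which the theory in~\cite{BrennerScott} produces averaged Taylor polynomial estimates with constants depending only on the chord ratio (the ratio of $h_{\E}$ to the radius of the star-shaped ball). Since that ratio is controlled uniformly by $\meshReg$ and $h_{\E} \geq h_{\s} \geq \meshReg h_{\E}$, the constants will be uniform across the mesh family. I will therefore not work with $\Po{\ell}$ directly at first, but instead introduce the degree-$\ell$ averaged Taylor polynomial $Q^{\ell}\w$ over the star-shaped ball as a computationally convenient comparison object.

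First I would invoke the Bramble--Hilbert lemma in the form stated in~\cite{BrennerScott} for star-shaped domains: for $\w \in H^{\reg}(\E)$ with $1 \le \reg \le \ell+1$, the averaged Taylor polynomial $Q^{\ell}\w \in \PE{\ell}$ satisfies
\begin{equation*}
  \abs{\w - Q^{\ell}\w}_{j,\E} \le \Cproj\, h_{\E}^{\reg - j} \abs{\w}_{\reg,\E}, \qquad j = 0, 1,
\end{equation*}
with $\Cproj$ depending only on $\ell$, $\reg$, $\spacedim$, and the chord ratio (hence only on $\ell$ and $\meshReg$). Second, I would pass from $Q^{\ell}\w$ to $\Po{\ell}\w$ using the optimality of the $L^2$-projection: since $\Po{\ell}\w$ is the best $L^2(\E)$ approximation from $\PE{\ell}$ and $Q^{\ell}\w \in \PE{\ell}$, we have immediately
\begin{equation*}
  \norm{\w - \Po{\ell}\w}_{0,\E} \le \norm{\w - Q^{\ell}\w}_{0,\E} \le \Cproj\, h_{\E}^{\reg}\abs{\w}_{\reg,\E},
\end{equation*}
which disposes of the $L^2$ term directly.

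The main obstacle is the $H^1$-seminorm term $\abs{\w - \Po{\ell}\w}_{1,\E}$, because $\Po{\ell}$ is orthogonal in $L^2$, not in $H^1$, so the naive best-approximation argument does not transfer to the gradient. My plan here is a triangle inequality splitting,
\begin{equation*}
  \abs{\w - \Po{\ell}\w}_{1,\E} \le \abs{\w - Q^{\ell}\w}_{1,\E} + \abs{Q^{\ell}\w - \Po{\ell}\w}_{1,\E},
\end{equation*}
where the first term is controlled directly by Bramble--Hilbert. For the second term, $Q^{\ell}\w - \Po{\ell}\w \in \PE{\ell}$ is a polynomial, so I would apply an inverse inequality on $\E$ of the form $\abs{q}_{1,\E} \le C h_{\E}^{-1}\norm{q}_{0,\E}$ valid for $q \in \PE{\ell}$ with constant depending on $\ell$ and $\meshReg$ (this inverse estimate is itself a consequence of the star-shapedness and scaling, and is standard for shape-regular polytopes). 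Then
\begin{equation*}
  \norm{Q^{\ell}\w - \Po{\ell}\w}_{0,\E} \le \norm{Q^{\ell}\w - \w}_{0,\E} + \norm{\w - \Po{\ell}\w}_{0,\E} \le 2\Cproj\, h_{\E}^{\reg}\abs{\w}_{\reg,\E},
\end{equation*}
so multiplying through by $h_{\E}^{-1}$ yields $h_{\E}\abs{Q^{\ell}\w - \Po{\ell}\w}_{1,\E} \le C h_{\E}^{\reg}\abs{\w}_{\reg,\E}$, and combining the two pieces gives the stated bound after renaming constants. The only point requiring care is verifying that the inverse inequality constant depends solely on $\ell$ and the mesh regularity $\meshReg$ and not on $h_{\E}$; this follows by mapping to the reference star-shaped ball and using that all norms on the finite-dimensional space $\PE{\ell}$ are equivalent with scale-invariant constants once the chord ratio is fixed.
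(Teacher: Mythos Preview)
Your proposal is correct and is precisely the argument the paper has in mind: the paper does not give a proof at all but simply states that ``the theory in~\cite{BrennerScott} for star-shaped domains may be used'' to establish the result, and your reduction via the averaged Taylor polynomial, $L^2$-optimality, and a polynomial inverse estimate is exactly that theory spelled out.
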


We shall make use of standard bubble functions on polygons/polyhedra below. A
bubble function $\psi_E\in H^1_0(E)$ for a polygon/polyhedron $\E$ can be
constructed piecewise as the sum of the (polynomial) barycentric
bubble functions (cf.~\cite{Verfurth:1996,Ainsworth-Oden:2000}) on
each $\spacedim$-simplex of the shape-regular sub-triangulation of the
mesh element $\E$ discussed in Remark~\ref{rem:shapeRegSubTri}.
\begin{lemma}[Interior bubble functions]
  \label{lem:bubEl}
  Let $\E\in\Th$ and let $\bubEl$ be the corresponding bubble
  function. There exists a constant $\Cbub$, independent of
  % $p$ and
  $h_{\E}$ such that for all $q \in \PE{\k}$
  \begin{equation*}
    \Cbub^{-1} \norm{q}_{0,\E}^2 \leq \intE \bubEl q^2 \dx \leq \Cbub \norm{q}_{0,\E}^2,
  \end{equation*}
  and
  \begin{equation*}
    \Cbub^{-1} \norm{q}_{0,\E} \leq \norm{\bubEl q}_{0,\E} + h_{\E} \norm{\nabla (\bubEl q)}_{0,\E} \leq \Cbub \norm{q}_{0,\E}.
  \end{equation*}
\end{lemma}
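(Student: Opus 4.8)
The goal is to establish two norm equivalences for the polygonal bubble function $\bubEl$, with constants independent of the polynomial degree $p$ and the local mesh size $h_\E$. The plan is to reduce everything to the shape-regular sub-triangulation $\Th^\E$ of $\E$ furnished by Remark~\ref{rem:shapeRegSubTri}, on which the classical simplicial bubble function estimates are available, and then to sum over the simplices. Since $\bubEl$ is assembled piecewise from the barycentric simplex bubbles $\psi_T$ on each $T\in\Th^\E$, we have $\bubEl|_T = \psi_T$, and each $\psi_T$ satisfies $0\le\psi_T\le 1$ with $\max_T \psi_T$ bounded below by a shape-regularity-dependent constant.

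For the first estimate I would argue on each simplex $T$. The upper bound $\intT \psi_T p^2\,\dx \le \norm{p}_{0,T}^2$ is immediate from $\psi_T\le 1$. For the matching lower bound, the standard trick (cf.~\cite{Verfurth:1996,Ainsworth-Oden:2000}) is to map $T$ to a reference simplex $\hat T$, where $p\mapsto\hat p$ lives in the finite-dimensional space $\mathbb{P}_k(\hat T)$; on $\hat T$ the two quantities $\intThat \hat\psi\,\hat p^2$ and $\norm{\hat p}_{0,\hat T}^2$ are both norms (squared) on $\mathbb{P}_k(\hat T)$, hence equivalent with constants depending only on $k$. Scaling back via the affine map introduces Jacobian factors that are controlled by shape regularity, so the constants remain $h_\E$-independent. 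Summing the per-simplex bounds over $T\in\Th^\E$ and using $\norm{p}_{0,\E}^2=\sum_T\norm{p}_{0,T}^2$ yields the first displayed inequality with a uniform $\Cbub$.

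For the second estimate the $L^2$ term $\norm{\bubEl p}_{0,\E}$ is handled exactly as above, since $\int_\E \psi_\E^2 p^2$ is again equivalent to $\norm{p}_{0,\E}^2$ by the same reference-simplex norm-equivalence argument applied to the positive definite form $\int \psi^2(\cdot)^2$. The genuinely new ingredient is the gradient term $h_\E\norm{\nabla(\bubEl p)}_{0,\E}$. Here I would work again simplex-by-simplex: on the reference element the map $\hat p\mapsto\norm{\nabla(\hat\psi\,\hat p)}_{0,\hat T}$ is a seminorm on the finite-dimensional space $\mathbb{P}_k(\hat T)$ which vanishes only when $\hat\psi\,\hat p$ is constant, i.e.\ only when $\hat p=0$ (as $\hat\psi$ vanishes on $\partial\hat T$), so it is in fact a norm and is bounded above by $C_k\norm{\hat p}_{0,\hat T}$. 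Transforming back to $T$, the gradient picks up a factor scaling like $h_T^{-1}$ which, after multiplication by the $h_\E$ prefactor and using $h_T\sim h_\E$ from shape regularity, produces the clean bound $h_\E\norm{\nabla(\psi_T p)}_{0,T}\le\Cbub\norm{p}_{0,T}$. Combining with the $L^2$ bound gives the upper inequality, and the lower inequality follows already from the $L^2$ term alone.

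The main obstacle to watch is the $p$-uniformity: the reference-element norm-equivalence constants a priori depend on the polynomial degree $k$, so I would need to verify that the estimates as stated are claimed only for fixed $\k$ (which the lemma's phrasing ``for all $p\in\PE{\k}$'' supports), and that the \emph{standard} simplicial bubble estimates quoted are indeed degree-robust in the form we use; the piecewise, non-smooth assembly of $\bubEl$ across inter-simplex interfaces also requires a brief check that $\bubEl\in H^1_0(\E)$, which holds because the barycentric bubbles vanish on $\partial T$ and hence match continuously (with value zero) across internal faces of $\Th^\E$.
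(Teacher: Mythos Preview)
The paper does not actually prove this lemma; it is stated as a standard result, with the surrounding text citing \cite{Verfurth:1996,Ainsworth-Oden:2000} and describing $\bubEl$ as the piecewise sum of the simplicial barycentric bubbles on the shape-regular sub-triangulation of Remark~\ref{rem:shapeRegSubTri}. Your proposal follows exactly this classical route---reduce to each simplex, invoke norm equivalence on the finite-dimensional space $\mathbb{P}_k(\hat T)$ on a reference element, scale back using shape regularity, and sum---so it is fully aligned with what the paper intends.

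One small clarification on your closing worry: in the lemma's phrasing ``independent of $p$ and $h_\E$'', the symbol $p$ denotes the polynomial $p\in\PE{\k}$, not the degree; the constant $\Cbub$ is permitted to depend on the fixed order $\k$, which is precisely the reading you arrive at and which makes the reference-element norm-equivalence argument go through without any degree-robustness issue.
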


\begin{lemma}[Edge bubble functions]
  \label{lem:bubEdge}
  For $\E\in\Th$, let $\edge \subset \dE$ be a mesh interface and let
  $\bubEdge$ be the corresponding interface bubble function.  There exists
  a constant $\Cbub$, independent of
  % $p$ and
  $h_{\E}$ such that for
  all $q \in \P{\edge}{\k}$
  \begin{equation*}
    \Cbub^{-1} \norm{q}_{0,\edge}^2 \leq \edgeint \bubEdge q^2 \ds \leq \Cbub \norm{q}_{0,\edge}^2,
  \end{equation*}
  and
  \begin{equation*}
    h_{\E}^{-1/2}\norm{\bubEdge q}_{0,\E} + h_{\E}^{1/2} \norm{\nabla (\bubEdge q)}_{0,\E} \leq \Cbub \norm{q}_{0,\edge}.
  \end{equation*}
Here, with slight abuse of notation, the symbol $q$ is also used to denote the constant prolongation of $q$ in the direction normal to $\edge$.
\end{lemma}

We shall first use the above two results to prove an inverse inequality for virtual element functions, made possible by the fact that functions in $\biglocalspace$ and $\VhE$ have polynomial Laplacians.
\begin{lemma}[Inverse inequality]
  \label{lem:laplacianInverseEstimate}
  Suppose that Assumption~\ref{ass:meshReg} is satisfied. Let
  $\E\in\Th$ and let $\w \in H^1(\E)$ be such that $\Delta
  w\in\PE{\k}$.  There exists a constant $\Cinv$, independent of $\w$,
  $h$ and $\E$, such that
  \begin{equation*}
    \norm{\Delta \w}_{0,\E} \leq \Cinv h_{\E}^{-1} \abs{\w}_{1,\E}.
  \end{equation*}
\end{lemma}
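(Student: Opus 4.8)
The plan is to test the polynomial Laplacian against its own interior bubble and integrate by parts, exploiting that the bubble-weighted polynomial vanishes on $\dE$. Write $p := \Delta \w \in \PE{\k}$; if $p = 0$ the bound is trivial, so assume $p \neq 0$. The function $\bubEl p$ belongs to $H^1_0(\E)$ by construction of the interior bubble, so it is an admissible test function for the distributional Laplacian of $\w$.

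First I would use the lower bound in the first estimate of Lemma~\ref{lem:bubEl} to recover the full $\LTWO$ mass of $p$ from its bubble-weighted integral, namely $\norm{p}_{0,\E}^2 \leq \Cbub \intE \bubEl p^2 \dx = \Cbub \intE (\bubEl p) \Delta \w \dx$. Since $\bubEl p \in H^1_0(\E)$, integrating by parts (equivalently, pairing the distributional Laplacian with an $H^1_0$ test function) eliminates the boundary term and gives $\intE (\bubEl p) \Delta \w \dx = -\intE \nabla(\bubEl p) \cdot \nabla \w \dx$. Applying the Cauchy--Schwarz inequality then yields $\norm{p}_{0,\E}^2 \leq \Cbub \norm{\nabla(\bubEl p)}_{0,\E} \abs{\w}_{1,\E}$.

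It remains to control $\norm{\nabla(\bubEl p)}_{0,\E}$ by the mass of $p$. The second estimate of Lemma~\ref{lem:bubEl} bounds the combination $\norm{\bubEl p}_{0,\E} + h_{\E} \norm{\nabla(\bubEl p)}_{0,\E}$ above by $\Cbub \norm{p}_{0,\E}$; discarding the first (nonnegative) term gives the scaled inverse bound $\norm{\nabla(\bubEl p)}_{0,\E} \leq \Cbub h_{\E}^{-1} \norm{p}_{0,\E}$. Substituting this into the previous display produces $\norm{p}_{0,\E}^2 \leq \Cbub^2 h_{\E}^{-1} \norm{p}_{0,\E} \abs{\w}_{1,\E}$, and dividing by $\norm{p}_{0,\E} > 0$ delivers the claim with $\Cinv = \Cbub^2$.

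The computation is essentially routine once the bubble estimates are in hand; the only point requiring care is the integration-by-parts step. Here I would stress that $\w$ is merely $H^1(\E)$, so $\Delta \w \in \PE{\k}$ must be read in the distributional sense, and the identity $\intE (\bubEl p) \Delta \w \dx = -\intE \nabla(\bubEl p)\cdot\nabla \w \dx$ is precisely the definition of that distributional Laplacian tested against $\bubEl p \in H^1_0(\E)$, so no classical regularity of $\w$ is needed. The $h_{\E}$-uniformity of $\Cinv$ then follows directly from that of $\Cbub$ asserted in Lemma~\ref{lem:bubEl}, which in turn rests on the shape-regularity of the sub-triangulation from Remark~\ref{rem:shapeRegSubTri}.
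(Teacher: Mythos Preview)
Your proof is correct and follows essentially the same approach as the paper: test $\Delta w$ against the bubble-weighted polynomial $\bubEl p$, integrate by parts using that $\bubEl p\in H^1_0(\E)$, and close with the two estimates of Lemma~\ref{lem:bubEl}. The only cosmetic difference is that the paper factors the argument through an explicit intermediate polynomial inverse inequality $\norm{q}_{0,\E}\le C h_\E^{-1}\norm{q}_{H^{-1}(\E)}$ before bounding $\norm{\Delta w}_{H^{-1}(\E)}\le |w|_{1,\E}$, whereas you combine these two steps into one direct chain.
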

\begin{proof}
  We first require an auxiliary polynomial inverse inequality
  $\norm{q}_{0,\E} \leq \Cinv h_{\E}^{-1} \norm{q}_{H^{-1}(\E)}$,
  valid for all $q \in \PE{\k}$.  This may be proven by selecting $\v
  = q \bubEl$ in the definition of the dual norm, viz.
  \begin{align}
    \norm{q}_{H^{-1}(\E)}
    :=
    \sup_{0\neq\v \in H^1_0(\E)} \frac{ \intE q \v \dx}{\norm{\nabla \v}_{0,\E}} \ge \frac{ \intE \bubEl q^2  \dx}{\norm{\nabla (\bubEl q)}_{0,\E}},
    \label{eq:inv:dualNormDef}
  \end{align}
  and using Lemma~\ref{lem:bubEl}. Applying this to
  $\Delta \w \in \PE{\k}$, we find that
  \begin{align*}
    \norm{\Delta \w}_{0,\E} &\leq \Cinv h_{\E}^{-1} \norm{\Delta \w}_{H^{-1}(\E)}.
  \end{align*}
  Now, using~\eqref{eq:inv:dualNormDef}, along with an integration by parts, we deduce
  \begin{align*}
    \norm{\Delta \w}_{H^{-1}(\E)} &= \sup_{0\neq\v \in H^1_0(\E)} \frac{ \intE\Delta \w  \v \dx}{\norm{\nabla \v}_{0,\E}}
    = \sup_{0\neq\v \in H^1_0(\E) } \frac{ -\intE \nabla \w \cdot \nabla \v \dx}{\norm{\nabla \v}_{0,\E}}.
  \end{align*}
  The result then follows by applying the Cauchy-Schwarz inequality.
\end{proof}

The above inverse estimate will be used to prove an approximation theorem (Theorem~\ref{thm:spaceApproximation} below) for the virtual element spaces considered in this work. The proof of Theorem~\ref{thm:spaceApproximation} is inspired by~\cite[Prop. 4.2]{SteklovVEM}, where a related result is obtained in the much simpler setting of the original virtual element space of~\cite{BasicsPaper} for $\spacedim=2$ only. As the construction in~\cite[Prop. 4.2]{SteklovVEM} does not appear to generalize to $d=3$, we use a different construction for the Cl\'ement-type interpolant below.

We begin by recalling some classical polynomial interpolation results
on simplicial triangulations. \assref{ass:meshReg} implies the existence of a globally
shape-regular sub-triangulation $\subtriang$ of $\Th$, cf. Remark~\ref{rem:shapeRegSubTri}.
We use this to define $\vClem$ as the classical Cl{\'e}ment
interpolant~\cite{Clement:1975} of $\v$ of degree $\k$ over the
sub-triangulation $\subtriang$.  Then, the following approximation
estimates hold~\cite{Clement:1975}
 for any
$\v \in H^1(\D)$:
\begin{align}
  \norm{\v - \vClem}_{0,T}+ h\abs{\v - \vClem}_{1,T} \leq
  \CclemT
  h \abs{\v}_{1,\patch{T}},
  \label{eq:subtriClementBound}
\end{align}
for all $T\in\subtriang$, with $\CclemT$ a positive constant depending only on the polynomial degree $\k$ and on the mesh regularity.  Here, $\patch{T}$ denotes the usual finite element patch relative to $T$.

\begin{theorem}[Approximation using virtual element functions]
	\label{thm:spaceApproximation}
	Suppose that Assumption~\ref{ass:meshReg} is satisfied and let $\Vh$ denote the virtual element space~\eqref{eq:GlobalSpace}.
	For $\v \in H^1(\D)$, there exists a $\vI \in \Vh$, such that, for all elements $\E\in\Th$, we have
	\begin{equation*}
		\norm{\v - \vI}_{0,\E} +h_{\E}\abs{\v - \vI}_{1,\E} \leq \Cclem h_{\E} \abs{\v}_{1,\patch{\E}},
	\end{equation*}
 $\Cclem$ being a positive constant, depending only on the polynomial degree $\k$ and the mesh regularity.
\end{theorem}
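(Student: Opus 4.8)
The plan is to build $\vI$ from the classical finite element Cl\'ement interpolant $\vClem$ of $\v$ on the shape-regular sub-triangulation $\subtriang$, and then to estimate the discrepancy between $\vClem$ and its virtual element interpolant. Concretely, I would define $\vI\in\Vh$ to be the virtual element function whose degrees of freedom coincide with those of $\vClem$. This is well posed because $\vClem$ is $C^0$ and piecewise polynomial, so every nodal value, interface moment and interior moment in $\text{DoF}(\VhE)$ is meaningful; the shared nodal and interface degrees of freedom make $\vI$ globally continuous, and the boundary degrees of freedom are taken in accordance with the homogeneous condition so that $\vI\in\Vh\subset H^1_0(\D)$. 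Like the space itself, the construction is recursive in $\spacedim$: in three dimensions the trace $\vI|_{\s}$ on each face $\s\in\dE$ is the two-dimensional virtual interpolant of $\vClem|_{\s}$, so the two-dimensional statement will feed into the three-dimensional one by induction.

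By the triangle inequality the task splits into bounding $\norm{\v-\vClem}_{0,\E}+h_{\E}\abs{\v-\vClem}_{1,\E}$ and $\norm{\vClem-\vI}_{0,\E}+h_{\E}\abs{\vClem-\vI}_{1,\E}$. The first is handled immediately by summing the simplexwise Cl\'ement estimate \eqref{eq:subtriClementBound} over the simplices $T\in\subtriang$ composing $\E$, giving $\norm{\v-\vClem}_{0,\E}+h_{\E}\abs{\v-\vClem}_{1,\E}\le C h_{\E}\abs{\v}_{1,\patch{\E}}$, with $\patch{\E}$ the union of the corresponding simplicial patches. All the real work lies in estimating the virtual interpolation error $w:=\vClem-\vI$.

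In two dimensions the decisive observation is that $w$ vanishes on $\partial\E$: on each edge $\s\in\dE$ the traces $\vClem|_{\s}$ and $\vI|_{\s}$ are both polynomials of degree $\k$ sharing the two endpoint values and the $\k-1$ moments $\MO{\s}{\k-2}$, so the $\k+1$ coincident degrees of freedom force $\vClem|_{\s}=\vI|_{\s}$. Hence $w\in H^1_0(\E)$, and the matched interior moments give $\Po{\k-2}w=0$. Testing with $w$ and integrating by parts against $\vI$, whose Laplacian is polynomial, $\Delta\vI\in\PE{\k}$, I obtain
\begin{equation*}
  \abs{w}_{1,\E}^2 = \intE \nabla\vClem\cdot\nabla w\dx + \intE (\Delta\vI)\,w\dx .
\end{equation*}
The first term is at most $\abs{\vClem}_{1,\E}\abs{w}_{1,\E}$. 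In the second, the degree $\le\k-2$ part of $\Delta\vI$ is annihilated by $\Po{\k-2}w=0$, while the remaining degree $\k-1,\k$ part pairs only with the top-order moments of $w$, which the clamping condition defining $\VhE$ equates to those of the computable projection $\spaceProj{\k}\vI$; this bounds the term by $\norm{\Delta\vI}_{0,\E}\norm{\vClem-\spaceProj{\k}\vI}_{0,\E}$. Using the inverse inequality of Lemma~\ref{lem:laplacianInverseEstimate}, $\norm{\Delta\vI}_{0,\E}\le\Cinv h_{\E}^{-1}\abs{\vI}_{1,\E}$, a polynomial-approximation bound $\norm{\vClem-\spaceProj{\k}\vI}_{0,\E}\lesssim h_{\E}\abs{\vClem}_{1,\E}$, and $\abs{\vI}_{1,\E}\le\abs{\vClem}_{1,\E}+\abs{w}_{1,\E}$, a Young inequality then absorbs the $\abs{w}_{1,\E}^2$ contribution and yields $\abs{w}_{1,\E}\lesssim\abs{\vClem}_{1,\E}$. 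The $L^2$ bound follows from Poincar\'e, $\norm{w}_{0,\E}\le C h_{\E}\abs{w}_{1,\E}$, and combining with the Cl\'ement estimate on $\vClem$ closes the two-dimensional case.

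The delicate point is exactly this second term. Since the space of \cite{UnifiedVEM} enforces $\Delta\vI\in\PE{\k}$ rather than $\PE{\k-2}$, the pairing $\intE(\Delta\vI)\,w$ does not simply vanish under $\Po{\k-2}w=0$, as it would for the original space exploited in \cite{SteklovVEM}; on its own the inverse inequality links $\norm{\Delta\vI}_{0,\E}$ and $\abs{\vI}_{1,\E}$ only circularly, and it is precisely the enhancement/clamping condition that supplies the independent small factor $\norm{\vClem-\spaceProj{\k}\vI}_{0,\E}\lesssim h_{\E}\abs{\vClem}_{1,\E}$ needed to break the circle. A second difficulty is genuinely three-dimensional: on a face $\s\in\dE$ the local space $\Vhs$ is itself virtual, so $\vClem|_{\s}\neq\vI|_{\s}$ in general and $w$ no longer vanishes on $\partial\E$. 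I would dispatch this by induction on $\spacedim$, splitting $w$ into an $H^1_0(\E)$ part treated as above and a lifting of the face discrepancy, the latter being controlled by applying the already-proven two-dimensional estimate on each face together with standard trace inequalities, so that the face contributions are again bounded by $\abs{\v}_{1,\patch{\E}}$.
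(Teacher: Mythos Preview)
Your approach is the same in spirit as the paper's: build $\vI$ from the Cl\'ement interpolant $\vClem$, exploit that their edge traces coincide in two dimensions, integrate by parts, and control the leftover pairing via the inverse inequality of Lemma~\ref{lem:laplacianInverseEstimate} together with the clamping identity in the definition of $\VhE$. The three-dimensional argument by induction through faces, using trace/lifting estimates, is also what the paper does.

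Where you differ from the paper is organizational but not merely cosmetic. The paper inserts an \emph{auxiliary} interpolant $\wI\in\biglocalspace$, defined by $-\Delta\wI=-\Delta\Po{\k}\vClem$ in $\E$ with $\wI=\vClem$ on $\partial\E$, and then constructs $\vI\in\VhE$ from $\wI$ (so the paper's $\vI$ inherits its interior moments from $\wI$, not from $\vClem$ as in your construction). The payoff is twofold. First, since both $\wI$ and $\vI$ lie in $\biglocalspace$ and agree on $\partial\E$, the identity $\abs{\wI-\vI}_{1,\E}^2=-(\Delta(\wI-\vI),\wI-\vI)$ lets the inverse inequality produce a factor $\abs{\wI-\vI}_{1,\E}$ that cancels cleanly against the left-hand side, avoiding your Young-inequality step. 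Second, the remaining factor is $\norm{\wI-\spaceProj{\k}\wI}_{0,\E}$, and since $\wI\in\biglocalspace$ the $L^2$-stability of $\spaceProj{\k}$ stated in the paper applies directly.

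In your route the analogous factor is $\norm{\vClem-\spaceProj{\k}\vI}_{0,\E}=\norm{(\Id-\spaceProj{\k})\vClem}_{0,\E}$, and the bound $\lesssim h_{\E}\abs{\vClem}_{1,\E}$ you invoke is not free: $\vClem\notin\biglocalspace$, so the $L^2$-stability of $\spaceProj{\k}$ as formulated in the paper does not apply. It \emph{can} be recovered for the DOF-Euclidean choice of $\spaceProj{\k}$ because $\vClem$ is piecewise polynomial on the shape-regular sub-triangulation: inverse estimates control all degrees of freedom of $\vClem-c$ by $h_{\E}^{-d/2}\norm{\vClem-c}_{0,\E}$, whence $\norm{\spaceProj{\k}(\vClem-c)}_{0,\E}\le C\norm{\vClem-c}_{0,\E}$ and Poincar\'e finishes. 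With this extra argument your Young-inequality closure goes through. So your proof is correct, but the step you label ``a polynomial-approximation bound'' is precisely where the paper's detour through $\wI$ buys a cleaner, self-contained justification.
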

\begin{proof}
We denote by $\vClem$ the Cl{\'e}ment interpolant defined over a sub-triangulation $\subtriang$ and satisfying~\eqref{eq:subtriClementBound}. It is assumed that all edges of the polygonal/polyhedral mesh $\Th$ are also edges of the sub-triangulation $\subtriang$, cf. Remark~\ref{rem:shapeRegSubTri}.

	{\bf Case $\spacedim=2$.}
	We start by interpolating $\vClem$ into the enlarged virtual element space $\bigspace$. More specifically,
we define $\wI$ elementwise as the solution of the problem
	\begin{align}\label{eq:wIprob}
		\begin{cases}
			-\Delta \wI = -\Delta \Po{\k}\vClem &\text{ in } \E, \\
			\wI = \vClem &\text{ on } \partial \E.
		\end{cases}
	\end{align}
	Then, since $\Delta \Po{\k}\vClem  \in \PE{\k-2} \subset \PE{\k}$ and $\vClem$ is a polynomial of degree $\k$ on each edge of $\E$, we may conclude that $\wI|_{\E} \in \biglocalspace$.
	Moreover, since $\vClem$ is continuous on $\D$, it follows that $\wI \in \bigspace$.

	Arguing as in~\cite[Proposition 4.2]{SteklovVEM}, we may show that
	\begin{align}\label{eq:wIPbound}
		\abs{\wI - \Po{\k}\vClem}_{1,\E} &\leq \abs{\vClem - \Po{\k}\vClem }_{1,\E},
	\end{align}
	and, therefore,
	\begin{align}\label{eq:wIbound}
			\abs{\vClem - \wI}_{1,\E} \leq 2\abs{\vClem - \Po{\k}\vClem }_{1,\E}.
	\end{align}
	Now, $\wI$ allows us to construct an interpolant $\vI\in\Vh$ using the definition of $\VhE$ (given in~\eqref{eq:localVEMSpace}) on each $\E\in\Th$.
	By definition, the two interpolants $\vI$ and $\wI$ are equal on the mesh skeleton $\Edges$ and for all $\E\in\Th$,
	$\MO{\E}{\mindex}(\vI)=\MO{\E}{\mindex}(\wI)$ if $\abs{\mindex}\le \k-2$, while $\MO{\E}{\mindex}(\vI)=\MO{\E}{\mindex}(\spaceProj{\k}\wI)$ if $\k-1\le\abs{\mindex}\le \k$.
	Consider, now, $\abs{\wI-\vI}_{1,\E}$ on each $\E\in\Th$.  Integration by parts yields
	\begin{align}
		\abs{\wI - \vI}_{1,\E}^2
			&= -(\Delta (\wI - \vI), \wI - \vI)_{\E},
			\label{eq:clement:integratedByParts}
	\end{align}
	as $\wI$ and $\vI$ coincide on $\dE$.
	Since $\wI - \vI \in \biglocalspace$, we have $\Delta (\wI - \vI) \in \PE{\k}$. Let $q_{\k, \k-1} \in \PE{\k} / \PE{\k-2}$ be defined by $q_{\k, \k-1}=\Delta (\wI - \vI) - \Po{\k-2} \Delta (\wI - \vI)$.
	Identity~\eqref{eq:clement:integratedByParts} can then be rewritten as
	\begin{align*}
		\abs{\wI - \vI}_{1,\E}^2 &= -(q_{\k, \k-1}, \wI - \vI)_{\E} = -(q_{\k, \k-1}, \wI - \spaceProj{\k} \wI)_{\E},
	\end{align*}
	since $\vI$ and $\wI$ have the same moments of up to degree $\k-2$, while $\vI$ and $\spaceProj{\k}\wI$ share the same moments of degree $\k$ and $\k-1$.
	The Cauchy-Schwarz inequality then implies that
	\begin{align*}
		\abs{\wI - \vI}_{1,\E}^2 &\leq \norm{q_{\k, \k-1}}_{0,\E} \norm{\wI - \spaceProj{\k} \wI}_{0,\E}.
	\end{align*}
	Further, from the stability of the $L^2$ projection we get
	\begin{align*}
		\norm{q_{\k,\k-1}}_{0,\E} = \norm{(\Id - \Po{\k-2}) \Delta (\wI - \vI)}_{0,\E} \leq \norm{\Delta (\wI - \vI)}_{0,\E},
	\end{align*}
	where $\Id$ denotes the identity operator on the space $\PE{\k}$.
	Thus,
	\begin{align*}
		\abs{\wI - \vI}_{1,\E}^2 &\leq \norm{\Delta (\wI - \vI)}_{0,\E} \norm{\wI - \spaceProj{\k} \wI}_{0,\E}
		 \leq \Cinv h_{\E}^{-1} \abs{\wI - \vI}_{1,\E} \norm{\wI - \spaceProj{\k} \wI}_{0,\E},
	\end{align*}
	by Lemma~\ref{lem:laplacianInverseEstimate}.
	Further, adding and subtracting $\Po{\k}\wI$ and using the stability of $\spaceProj{\k}$ and then using the Poincar\'e inequality (either on each  $2$-simplex of the shape-regular sub-triangulation $\E$  or directly on $\E$, cf.~\cite{veeser_verfurth:2012}), we obtain
	\begin{align*}
		\abs{\wI - \vI}_{1,\E} \leq \Cinv (1+C_0^*) h_{\E}^{-1} \norm{\wI - \Po{\k} \wI}_{0,\E}\leq \Cp \Cinv (1+C_0^*) \abs{\wI - \Po{\k} \wI}_{1,\E},
	\end{align*}
	for some uniform constants $C_0^*$ and $\Cp>0$ which depend on the shape regularity constant.
	Then, the triangle inequality, the stability of $\Po{\k}$ with constant, say, $C_0$, and~\eqref{eq:wIbound} imply that
	\begin{align*}
		\abs{\wI - \vI}_{1,\E}
			&\leq \Cp\Cinv (1+C_0^*)
			(\abs{\wI - \Po{\k}\vClem}_{1,\E} + \abs{\Po{\k}\vClem - \Po{\k} \wI}_{1,\E}) \\
			&= \Cp\Cinv (1+C_0^*)
			(\abs{\wI - \Po{\k}\vClem}_{1,\E} + C_0\abs{\vClem - \wI}_{1,\E}) \\
			&\leq \Cp\Cinv(1+C_0^*)(1+2C_0)
			\abs{\vClem - \Po{\k}\vClem}_{1,\E}.
	\end{align*}
	Finally, the triangle inequality, the above bound, and~\eqref{eq:wIbound}, imply
	\begin{align}
		\abs{\vClem - \vI}_{1,\E} \leq \abs{\vClem - \wI}_{1,\E}+\abs{\wI - \vI}_{1,\E}
		\leq C_1\abs{\vClem - \Po{\k}\vClem}_{1,\E},
		\label{eq:vCbound}
	\end{align}
	with $C_1:= (2+\Cp\Cinv(1+C_0^*)(1+2C_0))$.
	Since $\vI$ and $\vClem$ are equal on $\dE$, we may apply the Poincar\'e inequality to this to obtain a bound on $\norm{\vClem - \vI}_{0,\E}$, with an extra power of $h_\E$.

	The required bounds of $\abs{\v - \vI}_{r,\E}$, $r=0,1$, now follow by the triangle inequality,  adding and subtracting $\v$ and $\Po{\k}\v$ to the right-hand side of~\eqref{eq:vCbound}, using once again the triangle inequality, and applying the bounds~\eqref{eq:subtriClementBound} and Theorem~\ref{thm:polynomialApproximation}.

	{\bf Case $\spacedim=3$.}
    The proof in this case is based on using on each face $\s\in\dE$ the construction just considered for $\spacedim=2$ and then extending this inside $\E$.

	Let $\biglocalspaceS$ and $\Vhs$ be the interface spaces respectively defined by~\eqref{eq:biglocalspace} and~\eqref{eq:localVEMSpace} applied to the interface.
	For each $\s\in\dE$, we consider $\wIs\in\biglocalspaceS$ as the solution of the $2$-dimensional boundary value problem~\eqref{eq:wIprob} set on $\s$, with $\vClem$ representing the three-dimensional Cl\'ement interpolant of $\v$ with respect to the 3-dimensional sub-triangulaiton $\subtriang$.

	Further, from $\wIs$ we may use the $2$-dimensional construction to obtain $\vIs\in\Vhs$. This face interpolant satisfies~\eqref{eq:vCbound} with $\s$ in place of $\E$, namely:
	\begin{align}
		\abs{\vClem - \vIs}_{1,\s}\leq C_1\abs{\vClem - \Pos{\k}\vClem}_{1,\s},
		\label{eq:vCbound_face}
	\end{align}
	with $\Pos{\k}\vClem$ denoting the $L^2$-projection of the restriction of $\vClem$ to $\s$.
	Collecting the face-wise definitions we obtain a continuous interpolant $\vIdE$ on $\dE$. With this, we first construct $\wI$ on $\E$ as the solution of the problem
		\begin{align*}
		\begin{cases}
			-\Delta \wI = -\Delta \Po{\k}\vClem \text{ in } \E, \\
			\wI = \vIdE \text{ on } \partial \E,
		\end{cases}
	\end{align*}
	so that $\wI\in\biglocalspace$ by definition, as in the case $\spacedim = 2$ (cf.~\eqref{eq:biglocalspace}).

	In view of bounding $\abs{\wI - \Po{\k}\vClem}_{1,\E}$, it is convenient to first split the trace $(\wI- \Po{\k}\vClem)|_{\dE}=(\vIdE-\vClem|_{\dE})+(\vClem -\Po{\k}\vClem)|_{\dE}$.
	Recall that, for all $\s\in\dE$, we have $(\vIs-\vClem)|_{\partial\s}=0$. Moreover, by Assumption~\ref{ass:meshReg}, over $s$ we may construct  a shape-regular pyramid  $P_s\subset\E$  with $|P_s|\ge \meshReg |\E|$.   By the Trace Theorem applied to $\s\in\partial P_s$, there exists ${\varphi}_s\in H^1(P_s)$ with ${\varphi}_s |_{\partial P_s \setminus s} = 0$ and a constant $C_{{\rm T}}>0$ such that
	\begin{align*}
	\abs{{\varphi}_s}_{1,P_s}\le C_{{\rm T}}\norm{\vIs-\vClem}_{1/2,s}.
	\end{align*}
	The constant ${C}_{\rm T}$ can be bounded uniformly over all $\s$ by a generalised scaling argument, cf.~\cite{UnifiedVEM} and the references therein.
	Hence, defining ${\varphi}=\sum_{\s\in\dE}{\varphi}_s+\vClem-\Po{\k}\vClem$, where each  ${\varphi}_s$ should be interpreted as its extension to zero on $\E$, we have by construction that ${\varphi}|_{\dE}=(\wI- \Po{\k}\vClem)|_{\dE}$. Thus, as in the case $\spacedim=2$, we have
	\begin{align}\label{eq:iccc_bound}
	\abs{\wI - \Po{\k}\vClem}_{1,\E} &\le \abs{{\varphi}}_{1,\E}\le \sum_{\s\in\dE}\abs{{\varphi}^s}_{1,P_s}+\abs{\vClem-\Po{\k}\vClem}_{1,\E}\le C_{{\rm T}}\sum_{\s\in\dE}\norm{\vIs-\vClem}_{1/2,s}+\abs{\vClem-\Po{\k}\vClem}_{1,\E}.
	\end{align}
	It just remains to bound the first term on the right-hand side. To this end, we use the Sobolev Interpolation Theorem and Poincar\'e inequality (facewise, cf. the case $\spacedim=2$ above):
	\begin{align}
	\nonumber
	\norm{\vIs - \vClem }_{1/2,\s}^2&\le \norm{\vIs - \vClem }_{0,\s}^2+C_{\rm S}\norm{\vIs - \vClem }_{0,\s}\abs{\vIs - \vClem }_{1,\s}\\
	\nonumber
	&\le (1+C_{\rm S}h_\E^{-1})\norm{\vIs - \vClem }_{0,\s}^2+C_{\rm S}h_\E\abs{\vIs - \vClem }_{1,\s}
	\\
	\nonumber
	&\le (\Cp^2 (h_\E+C_{\rm S})+ C_{\rm S}) h_\E \abs{\vIs - \vClem }_{1,\s}^2
	\\&
	\label{eq:vic_bound}
	\le (\Cp^2 (h_\E+C_{\rm S})+ C_{\rm S}) C_1 h_\E \abs{\vClem - \Pos{\k}\vClem}_{1,\s}^2,
	\end{align}
	for some constant ${C}_{\rm S}>0$
	which, again, can be bounded uniformly over all $s$ by a generalised scaling argument.
	To obtain the last bound above  we used~\eqref{eq:vCbound_face} applied to $\s\in\dE$.
	The interface terms above are further bounded by applying Theorem~\ref{thm:polynomialApproximation}, yielding
	\begin{align*}
	\abs{\vClem - \Pos{\k}\vClem}_{1,\s}
	\le
	\Cproj \abs{\vClem}_{1,\s}\le
	 \Cproj h_{\E}^{-1/2}\abs{\vClem}_{1,\E}.
	\end{align*}
	Using this bound in~\eqref{eq:vic_bound} and the latter in~\eqref{eq:iccc_bound}, we finally obtain
	\begin{align}
	\label{eq:wI_bound3}
	\abs{\wI - \Po{\k}\vClem}_{1,\E}&\leq C_2 \abs{\vClem}_{1,\E},
	\end{align}
	with $C_2>0$ depending on the (uniformly bounded) number $\nuE$ of interfaces of $\E$  and on the constants $C_{{\rm T}}$, $\Cp, C_{{\rm S}}, \Cinv$, and  $\Cproj$.

	Now, given $\wI$, we can construct an interpolant $\vI\in\Vh$ exactly as in the $2$-dimensional case and following the same (dimension-independent) argument derive the bound~\eqref{eq:vCbound}. This latter bound, combined with~\eqref{eq:wI_bound3}, yields
		\begin{align}
		\abs{\vClem - \vI}_{1,\E} \leq 2\abs{\vClem-\Po{\k}\vClem}_{1,\E}+\abs{\wI - \Po{\k}\vClem}_{1,\E}
		\leq C_3 \abs{\vClem }_{1,\E},
		\label{eq:vCbound3}
	\end{align}
	for some $C_3>0$ depending on $C_1$ and $C_2$.

	From~\eqref{eq:vCbound3} we can derive the required bound in the $L^2$-norm by resorting to the scaled Poincar\'e-Friedrichs inequality~\cite{Brenner-PF} and recalling~\eqref{eq:vCbound3}:
	\begin{align}
		\nonumber
		\norm{\vClem - \vI}_{0,\E} &\leq \Cp\left(h_\E\abs{\vClem - \vI}_{1,\E}+h_\E^{-1/2}\abs{\int_{\dE} (\vClem - \vI)ds} \right)\\
		 &\leq \Cp(C_3 h_\E\abs{\vClem }_{1,\E}+h_\E^{1/2}\sum_{s\in\dE}\norm{\vClem - \vIs}_{0,\s})
		\label{eq:L2bound1}
	\end{align}
	The interface terms on the right-hand side can be further bounded using the Poincar\'e inequality once more and~\eqref{eq:vCbound_face}:
	\begin{align*}
		\nonumber
		\norm{\vClem - \vIs}_{0,\s} &\leq \Cp h_\E\abs{\vClem - \vIs}_{1,\s} \leq \Cp C_1 h_\E \abs{\vClem - \Pos{\k}\vClem}_{1,\s}\\
		&\leq  \Cp C_1 \Cproj h_\E \abs{\vClem}_{1,\s}\leq \Cp C_1 \Cproj h_\E^{1/2}\abs{\vClem}_{1,\E}.
	\end{align*}
	Finally, combining this bound with~\eqref{eq:L2bound1} yields
	\begin{align*}
		\norm{\vClem - \vI}_{0,\E} &\leq C_4 h_\E \abs{\vClem}_{1,\E},
	\end{align*}
	with $C_4>0$ depending on $\Cp, C_1,$ $C_3$, $\Cproj$, and $\nuE$.

	The statement of the theorem now follows, as in the case $\spacedim=2$.
\end{proof}

\begin{remark}\label{rem:constants}
	For $\spacedim=3$, the proof of the above VEM approximation result makes use of both the Trace Theorem and Sobolev Interpolation Theorem applied to each mesh interface. This was necessitated by the hierarchical construction of the local virtual element spaces with respect to spactial dimension. The associated constants are uniformly bounded but depend on the polygonal shape of the mesh interfaces, and as such are not easily accessible  in general.
	However, if the mesh interfaces are triangular or the method is constructed on the sub-triangulation of each mesh interface, the proof does only depend on easily computable quantities. 
\end{remark}

\section{A posteriori error analysis}
\label{sec:aposteriori}
We shall now derive a residual-type a posteriori error bound for the
error in the standard energy norm:
\begin{equation}
  \label{eq:energyNorm}
  \triplenorm{\v}_{\omega}^2
  :=
  \norm{\sqrt{\diff} \nabla \v}_{0,\omega}^2 + \norm{\sqrt{\reacSym} \v}_{0,\omega}^2,
\end{equation}
for $\v \in H^1(\omega)$, for any $\omega \subseteq \D$.  The
coercivity and continuity of the bilinear form $\A$ in this
norm follow from the assumptions on the coefficients $\diff$ and $\reacSym$, which
ensure that for $\v \in H^1_0(\D)$,
\begin{align*}
  (\Cequiv)^{-1} \norm{\v}_{1,\D} \leq  \triplenorm{\v}_{\D} \leq  \CequivUp \norm{\v}_{1,\D},
\end{align*}
where $\Cequiv := \sqrt{(1 + \Cpf)/{\elipLower}}$, with
$\Cpf$ the Poincar\'e-Friedrichs constant, and $\CequivUp :=\sqrt{
\max\{\elipUpper, \norm{\reacSym}_{\infty}\}}$,
cf.~\cite{UnifiedVEM}.
The coercivity and continuity of $\Ah$ in this norm are then inherited from $\A$ through the virtual element stability property~\eqref{eq:stability}.

To account for the effects of data oscillation, we
introduce the following piecewise-polynomial approximations of the
PDE coefficients:
\begin{align}
  \diffh \approx \diff, \qquad \convh \approx \conv, \qquad \reach \approx \reac.
  \label{eq:dataApproximation}
\end{align}

For quantities which may be discontinuous across the mesh skeleton, we
define the jump operator $\jump{\cdot}$ across a mesh interface $\s\in
\Edges$ as follows. If $\s \in \InternalEdges$, then there exist
$\E^+$ and $\E^-$ such that $\s\subset \partial\E^+\cap\partial\E^-$.
Denote by $\vec{v}^{\pm}$ the trace of the vector-valued function $\vec{v}|_{\E^{\pm}}$ on $\s$ from within
$\E^{\pm}$ and by $\n_\s^{\pm}$ the unit outward normal on $\s$ from
$\E^{\pm}$. Then, $\jump{\vec{v}}:=\vec{v}^+ \cdot \n_{\s}^{+} + \vec{v}^- \cdot \n_{\s}^{-}$.  If, on
the other hand, $\s \in \BoundaryEdges$, then $\jump{\vec{v}}:=\vec{v} \cdot \n_{\s}$,
with $\vec{\v}$ representing the trace of $\vec{\v}$ from within the element $\E$
having $\s$ as an interface and $\n_{\s}$ is the unit outward normal
on $\s$ from $\E$.

\subsection{The residual equation}
Define $\err := \u - \uh \in H^1_0(\D)$, and let $\v\in H^1_0(\D)$.
Then, we have, respectively,
\begin{align}
	\A(\err, \v) &=(\force, \v) - \A(\uh, \chi)-\A(\uh, v-\chi)\notag\\
&=(\force, \v)- ( \forceh, \chi) + \A_h(\uh, \chi)- \A(\uh, \chi)-\A(\uh, v-\chi)	\notag\\
&=(\force-\forceh, \chi)+  (\force, \v-\chi) +\A_h(\uh, \chi)-\A(\uh, \chi)- \A(\uh, \v-\chi)\label{eq:realibility:originalAPostSplitting}
\end{align}
for any $\chi\in \Vh$, since $\u$ satisfies the weak form of the PDE
problem and $\uh$ is the virtual element solution. Notice that, in contrast to a posteriori bounds for standard
finite element approximations, additional terms appear in the virtual
element residual equation. These terms represent the \emph{virtual inconsistency} of the VEM.

\subsection{A posteriori error bound}
We shall estimate each term on the right-hand side
of~\eqref{eq:realibility:originalAPostSplitting} separately, to arrive
to a computable error bound. To this end, an integration by parts and
straightforward manipulation yields the identity
\begin{align*}
  \A(\uh, w)
  &=
  \left( - \nabla \cdot \diff \Po{\k-1} \nabla \uh +\conv \cdot\Po{\k-1}\nabla \uh + \reac \Po{\k} \uh, w \right) + \sum_{\s \in \Edges} \ints \jump{ \diff \Po{\k-1} \nabla \uh} w \ds +
  \\
  &\qquad +
  \left(\diff (\Id - \Po{\k-1}) \nabla \uh, \nabla w) + (\conv \cdot (\Id - \Po{\k-1}) \nabla \uh, w) + (\reac (\Id - \Po{\k}) \uh, w \right),
\end{align*}
for any $w \in H^1_0(\D)$. Using this and the data approximations introduced
in~\eqref{eq:dataApproximation},
\eqref{eq:realibility:originalAPostSplitting} may be rewritten as
\begin{align}
  \A(\err, \v)
  =&
  \sum_{\E \in \Th} \left(
 	\left( \elresid, \v-\chi \right)
  	+ \left( \eldataresid, \v-\chi \right)
  	+ \nonPolyResid^{\E}(\uh,\v-\chi)
  	\right)
  - \sum_{\s \in \Edges} \left(
  	\left( \edgeresid, \v -\chi\right)_{0,\s}
  	+ \left( \edgedataresid, \v-\chi \right)_{0,\s}
  	\right) \notag\\
  &\qquad\qquad + (\force-\forceh, \chi)
  +\A_h(\uh, \chi)-\A(\uh, \chi)\label{eq:residualEquation}
\end{align}
for any $\v \in H^1_0(\D)$, $\chi \in \Vh$, where
\begin{align}
  \elresid
  &:=
  (\forceh + \nabla \cdot \diffh \Po{\k-1} \nabla \uh -\convh \cdot \Po{\k-1} \nabla \uh - \reach \Po{\k} \uh)|_{\E}, \label{eq:elResidual}\\
  \edgeresid
  &:=
  \left.\jump{ \diffh \Po{\k-1} \nabla \uh}\right|_{\s}, \label{eq:edgeResidual}\\
  \eldataresid
  &:=
  (\force - \forceh + \nabla \cdot (\diff - \diffh) \Po{\k-1} \nabla \uh - (\conv - \convh) \cdot \Po{\k-1} \nabla \uh - (\reac - \reach) \Po{\k} \uh)|_{\E}, \label{eq:elDataResidual}\\
  \edgedataresid
  &:=
  \left.\jump{ (\diff - \diffh) \Po{\k-1} \nabla \uh}\right|_{\s}, \label{eq:edgeDataResidual}
\end{align}
are the element and edge residuals, and the element and edge data
oscillation terms, respectively, and
\begin{align*}
  \nonPolyResid^{\E}(\wh,\v)
  :=
  (\diff (\Id - \Po{\k-1}) \nabla \wh, \nabla \v)_{\E} + (\conv \cdot (\Id - \Po{\k-1}) \nabla \wh, \v)_{\E} + (\reac (\Id - \Po{\k}) \wh, \v )_{\E},
\end{align*}
is the `virtual' residual.

\begin{theorem}[Upper bound]
	\label{thm:reliability}
	Let $\uh \in \Vh$ be the virtual element solution to problem~\eqref{eq:VEMproblem}.
	Then, there exists a constant $C$, independent of $h$, $\u$ and $\uh$, such that
	\begin{align*}
		\triplenorm{\u - \uh}^2_{\D} \leq C \sum_{\E \in \Th} (\estimator^{\E} + \dataest^{\E} + \stabest^{\E} + \virtualosc^{\E}),
	\end{align*}
	where
	\begin{align*}
		\estimator^{\E}&:= h_{\E}^2 \norm{\elresid}^2_{0,\E}+ \sum_{\s \subset \dE} h_{\s}\norm{\edgeresid}_{0,\s}^2,\\
		\dataest^{\E} &:=h_{\E}^2 \norm{\eldataresid}^2_{0,\E} + h_{\E}^2 \norm{\force - \forceh}_{0,\E}^2 + \sum_{\s \subset \dE} h_{\s}\norm{\edgedataresid}_{0,\s}^2, \\
		\stabest^{\E} &= \StaE((\Po{\k} - \Id) \uh,(\Po{\k} - \Id)\uh ),
	\end{align*}
	and $\virtualosc^{\E}$ encompasses the \emph{virtual inconsistency} terms, defined as the sum of
	\begin{align*}
		\virtualosc_1^{\E} &= \norm{(\Po{\k-1} - \Id) (\diff \Po{\k-1} \nabla \uh)}_{0,\E}^2, \\
		\virtualosc_2^{\E} &= h_{\E}^2 \norm{(\Po{\k} - \Id) (\conv \cdot \Po{\k-1} \nabla \uh)}_{0,\E}^2, \\
		\virtualosc_3^{\E} &= \norm{(\Po{\k-1} - \Id) (\conv \Po{\k} \uh)}_{0,\E}^2, \\
		\virtualosc_4^{\E} &= h_{\E}^2 \norm{(\Po{\k} - \Id) (\reacSym \Po{\k} \uh)}_{0,\E}^2.%, \\
	\end{align*}
\end{theorem}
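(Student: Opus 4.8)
The plan is to test the residual equation~\eqref{eq:residualEquation} against the error itself and to exploit coercivity. Because the skew-symmetric component of $\A$ vanishes on the diagonal, $\A(\err,\err)=\triplenorm{\err}_{\D}^2$, so it suffices to bound the right-hand side of~\eqref{eq:residualEquation} with the choice $\v=\err$. For the free discrete function $\chi\in\Vh$ I would take the Cl\'ement-type virtual element interpolant of $\err$ furnished by Theorem~\ref{thm:spaceApproximation}, so that on each element $\norm{\err-\chi}_{0,\E}+h_{\E}\abs{\err-\chi}_{1,\E}\le\Cclem h_{\E}\abs{\err}_{1,\patch{\E}}$ and, in particular, $\abs{\chi}_{1,\E}\le C\abs{\err}_{1,\patch{\E}}$ after a triangle inequality.

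The element and edge residual terms, together with their data-oscillation counterparts, would be treated by the standard residual-method machinery. First I apply the Cauchy--Schwarz inequality elementwise; the volumetric terms $(\elresid,\err-\chi)_{\E}$ and $(\eldataresid,\err-\chi)_{\E}$ are controlled using the interpolation estimate above, producing the factors $h_{\E}\norm{\elresid}_{0,\E}$ and $h_{\E}\norm{\eldataresid}_{0,\E}$, while the interface terms additionally require the multiplicative trace inequality to pass from $\norm{\err-\chi}_{0,\s}$ to $h_{\s}^{1/2}\abs{\err}_{1,\patch{\E}}$, yielding the factors $h_{\s}^{1/2}\norm{\edgeresid}_{0,\s}$ and $h_{\s}^{1/2}\norm{\edgedataresid}_{0,\s}$. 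For the load term I use that $\force-\forceh=(\Id-\Po{\k-1})\force$ is $L^2$-orthogonal to $\PE{\k-1}$, so that $(\force-\forceh,\chi)_{\E}=(\force-\forceh,(\Id-\Po{\k-1})\chi)_{\E}$ and Theorem~\ref{thm:polynomialApproximation} gives $\norm{(\Id-\Po{\k-1})\chi}_{0,\E}\le C h_{\E}\abs{\err}_{1,\patch{\E}}$. Summing over the mesh, invoking the finite overlap of the patches $\patch{\E}$ and the norm equivalence $\abs{\err}_{1,\D}\le C\triplenorm{\err}_{\D}$, these contributions are bounded by $C(\sum_{\E}(\estimator^{\E}+\dataest^{\E}))^{1/2}\triplenorm{\err}_{\D}$.

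The delicate part, and what I expect to be the main obstacle, is the virtual inconsistency: the remaining terms $\A_h(\uh,\chi)-\A(\uh,\chi)$ together with the virtual residual $\nonPolyResid^{\E}(\uh,\err-\chi)$, which have no counterpart in standard FEM analysis. The strategy is to regroup them by component (diffusion, reaction, convection) and, in each, to insert the appropriate $L^2$-projection. For the diffusion part, combining the contributions of $\A_h(\uh,\chi)-\A(\uh,\chi)$ and of $\nonPolyResid^{\E}(\uh,\err-\chi)$ the cross terms cancel, leaving $(\diff\Po{\k-1}\nabla\uh,(\Po{\k-1}-\Id)\nabla\chi)_{\E}-(\diff(\Id-\Po{\k-1})\nabla\uh,\nabla\err)_{\E}$. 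Using the self-adjointness and idempotency of $\Po{\k-1}$, the first term equals $-((\Id-\Po{\k-1})(\diff\Po{\k-1}\nabla\uh),(\Id-\Po{\k-1})\nabla\chi)_{\E}$ and is bounded by $\sqrt{\virtualosc_1^{\E}}\,\abs{\err}_{1,\patch{\E}}$. The second term is controlled by the key observation that $\nabla\Po{\k}\uh\in\PE{\k-1}$, whence $(\Id-\Po{\k-1})\nabla\uh=(\Id-\Po{\k-1})\nabla(\Id-\Po{\k})\uh$ sees only the non-polynomial part of $\uh$; estimate~\eqref{eq:bounddiff} then bounds $\norm{\nabla(\Id-\Po{\k})\uh}_{0,\E}$ by the stabilisation, delivering a factor $\sqrt{\stabest^{\E}}\,\triplenorm{\err}_{\E}$. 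The reaction and convection parts are handled in the same spirit: each inner product of a coefficient times a projected quantity is split, by orthogonality, into a genuinely computable oscillation ($\virtualosc_2^{\E}$, $\virtualosc_3^{\E}$, $\virtualosc_4^{\E}$) and a remainder involving $(\Id-\Po{\k})\uh$ or $(\Id-\Po{\k-1})\nabla\uh$ absorbed into $\stabest^{\E}$ via Definition~\ref{def:admissibleStabilisingTerm} and~\eqref{eq:bounddiff}; the explicit stabilisation term $\StaE((\Id-\Po{\k})\uh,(\Id-\Po{\k})\chi)$ is bounded by the Cauchy--Schwarz inequality for the inner product $\StaE$ and admissibility, giving $\sqrt{\stabest^{\E}}\,\triplenorm{(\Id-\Po{\k})\chi}_{\E}\le C\sqrt{\stabest^{\E}}\,\triplenorm{\err}_{\patch{\E}}$.

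Finally, I would collect all the elementwise bounds, apply the discrete Cauchy--Schwarz inequality over $\Th$, and use the bounded overlap of the patches to replace $\sum_{\E}\triplenorm{\err}_{\patch{\E}}^2$ by $C\triplenorm{\err}_{\D}^2$. This yields $\triplenorm{\err}_{\D}^2\le C\big(\sum_{\E}(\estimator^{\E}+\dataest^{\E}+\stabest^{\E}+\virtualosc^{\E})\big)^{1/2}\triplenorm{\err}_{\D}$, and dividing by $\triplenorm{\err}_{\D}$ gives the claim. The hard part is precisely the bookkeeping of the inconsistency terms: one must verify that every non-computable contribution (those pairing $\diff(\Id-\Po{\k-1})\nabla\uh$, $\reacSym(\Id-\Po{\k})\uh$, and the analogous convective quantities against $\nabla\err$ or $\err$) is genuinely controlled either by one of the oscillations $\virtualosc_i^{\E}$ or by the stabilisation $\stabest^{\E}$, rather than producing an uncontrolled multiple of $\triplenorm{\err}_{\D}$.
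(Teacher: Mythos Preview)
Your proposal is correct and follows essentially the same route as the paper's proof: both test the residual equation with $\v=\err$ and $\chi$ the Cl\'ement-type virtual interpolant of Theorem~\ref{thm:spaceApproximation}, treat the residual, edge, and data-oscillation terms by the standard Cauchy--Schwarz/trace/interpolation argument, and reduce the non-computable pieces of the inconsistency to $\stabest^{\E}$ via the key identity $(\Id-\Po{\k-1})\nabla\uh=(\Id-\Po{\k-1})\nabla(\Id-\Po{\k})\uh$ together with~\eqref{eq:bounddiff}. The only organisational difference is that you merge the virtual residual $\nonPolyResid^{\E}(\uh,\err-\chi)$ and the consistency defect $\A_h(\uh,\chi)-\A(\uh,\chi)$ and regroup by coefficient, which produces a small cancellation; the paper instead bounds these two terms separately (its terms $IV$ and $V$), but arrives at exactly the same collection of $\virtualosc_i^{\E}$ and $\stabest^{\E}$ contributions.
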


\begin{proof}
	Let $\errClem \in \Vh$ be the interpolant of $\err$ satisfying the bounds of Theorem~\ref{thm:spaceApproximation}.
	Then, upon setting $\v=\err$ and $\chi=\errClem$ in~\eqref{eq:residualEquation}, coercivity yields
	\begin{align*}
		\triplenorm{\err}_{\D}^2 &= \sum_{\E \in \Th} \Big( \left( \elresid, \err - \errClem \right)_{\E} + \left( \eldataresid, \err - \errClem \right)_{\E} + (\force-\forceh, \errClem)_{\E}+ \nonPolyResid^{\E}(\uh, \err - \errClem)  \\
			&\qquad + \left( \AhE( \uh, \errClem) - \AE( \uh, \errClem) \right)  \Big) - \sum_{\s \in \Edges} \left(\left( \edgeresid, \err - \errClem \right)_{0,\s} + \left( \edgedataresid, \err - \errClem \right)_{0,\s} \right)\\
			&=: \sum_{\E \in \Th} \Big( I +II+III+IV+V\Big) - \sum_{\s \in \Edges} \Big(VI+VII\Big).
	\end{align*}

	For $I$ and $II$, we use the Cauchy-Schwarz inequality and the bounds of Theorem~\ref{thm:spaceApproximation} to find that
	\begin{align*}
		I= \left(\elresid, \err - \errClem \right)_{\E}
			&\leq \Cclem h_{\E}\norm{\elresid}_{0,\E} \norm{\err}_{1,\E}, \\
				II= \left(\eldataresid, \err - \errClem \right)_{\E}
			&\leq \Cclem h_{\E}\norm{\eldataresid}_{0,\E} \norm{\err}_{1,\E}.
	\end{align*}
		For $III$, we use the properties of the $L^2$-projection to find that
	\begin{align*}
		III &= (\force - \forceh, \errClem - \Po{\k-1} \errClem)_{\E}
			\leq \Cproj h_{\E} \norm{\err}_{1,\E} \norm{\force - \forceh}_{0,\E}.
	\end{align*}

	Bounding the edge terms $VI$ and $VII$ requires the use of the scaled trace inequality $\norm{v}_{0,\s}^2 \leq \Ctrace ( h_{\E}^{-1} \norm{\v}_{0,\E}^2 + h_{\E} \norm{\nabla \v}_{0,\E}^2)$, for $\v \in H^1(\E)$,
	along with the bounds of Theorem~\ref{thm:spaceApproximation} and the mesh regularity assumption, yielding
	\begin{align*}
		VI &\leq \norm{\edgeresid}_{0,\s} \norm{\err - \errClem}_{0,\s}
			\leq \Ctrace\Cclem \meshReg^{-\frac{1}{2}} \norm{\err}_{1,\edgepatch} h_{\s}^{\frac{1}{2}} \norm{\edgeresid}_{0,\s},\\
			VII &\leq \norm{\edgedataresid}_{0,\s} \norm{\err - \errClem}_{0,\s}
			\leq \Ctrace\Cclem \meshReg^{-\frac{1}{2}} \norm{\err}_{1,\edgepatch} h_{\s}^{\frac{1}{2}} \norm{\edgedataresid}_{0,\s},
	\end{align*}
	where $\edgepatch = \E^+ \cup \E^-$ with $\E^+$ and $\E^-$ the elements meeting at the edge $\s$.%, and $h_{\edgepatch} = \max\{h_{\E^+}, h_{\E^-}\}$.

Noting that
    \begin{align}
    \label{eq:externalToInternalProjection}
    \norm{(\Id - \Po{\k-1}) \nabla \uh}_{0,\E}=\norm{(\Id - \Po{\k-1}) \nabla (\Id - \Po{\k})\uh }_{0,\E}\le
    \norm{ \nabla (\Id - \Po{\k})\uh }_{0,\E},
    \end{align}
we can bound $IV$ as
	\begin{align}
		IV \label{eq:nonPolyResidUpperBound}
			&\leq \Cclem\left(\norm{\diff (\Id - \Po{\k-1}) \nabla \uh }_{0,\E} + h_{\E}\norm{\conv \cdot (\Id - \Po{\k-1}) \nabla \uh }_{0,\E} + h_{\E}\norm{\reac(\Id - \Po{\k}) \uh }_{0,\E} \right) \norm{\err}_{1,\E} \notag\\
			&\leq \Cclem\left(\left(\elipUpperE + h_{\E}\norm{\conv}_{\infty,\E} \right)\norm{(\Id - \Po{\k-1}) \nabla \uh }_{0,\E} + h_{\E}\norm{\reac}_{\infty,\E} \norm{(\Id - \Po{\k}) \uh }_{0,\E} \right) \norm{\err}_{1,\E} \notag\\
			&\leq \Cclem\left(\left(\elipUpperE + h_{\E}\norm{\conv}_{\infty,\E} \right)\norm{\nabla(\Id - \Po{\k})  \uh }_{0,\E} + h_{\E}\norm{\reac}_{\infty,\E} \norm{(\Id - \Po{\k}) \uh }_{0,\E} \right) \norm{\err}_{1,\E}.
	\end{align}
Using now~\eqref{eq:bounddiff},
and introducing the mesh Pecl\'et number by $\pecletE := h_{\E}\norm{\conv}_{\infty,\E}/\elipLowerE$, we arrive to
\begin{align*}
		IV &\leq \sqrt{2}\Cclem \Cstab \norm{\err}_{1,\E} \bigg( \left(\frac{\elipUpperE}{\elipLowerE} + \pecletE \right)^2 \elipLowerE \StaE_{1}((\Id - \Po{\k})\uh,(\Id - \Po{\k})\uh)  \\
				&\qquad\qquad\qquad\qquad\qquad + \left( \frac{\norm{\reac}_{\infty,\E}}{\reacLowerE} \right)^2 h_{\E}^2 \reacLowerE \StaE_{0}((\Id - \Po{\k})\uh,(\Id - \Po{\k})\uh)  \bigg)^{1/2}.
	\end{align*}

	Focussing on $V$, we begin by observing the identity (due to the properties of the $L^2$-projection)
	\begin{align*}
		\aE(\uh, \errClem) - \ahE(\uh, \errClem)
			&= (\diff (\Id - \Po{\k-1}) \nabla \uh, \nabla \errClem)_{\E} + \left((\Id - \Po{\k})\uh, \reacSym \errClem \right)_{\E}  \\
				&\quad + ((\Id - \Po{\k-1}) \diff \Po{\k-1} \nabla \uh, \nabla \errClem)_{\E} + \left((\Id - \Po{\k}) \reacSym \Po{\k} \uh, (\Id - \Po{\k}) \errClem \right)_{\E}  \\
				&\quad - \StaE((\Id - \Po{\k})\uh, (\Id - \Po{\k})\errClem)\\
				&\leq \norm{\errClem}_{1,\E} \left( \elipUpperE \norm{ \nabla (\Id - \Po{\k})\uh }_{0,\E} + \norm{\reacSym}_{\infty,\E}\norm{(\Id - \Po{\k})\uh}_{0,\E} \right.
			\\
				&\quad\quad\quad\quad\quad \left.+ \norm{(\Id - \Po{\k-1}) \diff \Po{\k-1} \nabla \uh}_{0,\E} + \Cproj h_{\E} \norm{(\Id - \Po{\k}) \reacSym \Po{\k} \uh}_{0,\E} \right) \\
				&\quad\quad  - \StaE((\Id - \Po{\k})\uh, (\Id - \Po{\k})\errClem),
	\end{align*}
	with the last bound resulting from the Cauchy-Schwarz inequality and Theorem~\ref{thm:polynomialApproximation}.
	From Definition~\ref{def:admissibleStabilisingTerm} and Theorem~\ref{thm:polynomialApproximation}, we may bound the final term by
	\begin{align*}
		\StaE&((\Id - \Po{\k})\uh, (\Id - \Po{\k})\errClem)  \\
			&\leq  ( \stabConst_1^2 \StaE_1((\Id - \Po{\k})\uh, (\Id - \Po{\k})\uh))^{1/2} \Big( \Cstab \intE \diff  \nabla ((\Id - \Po{\k}) \errClem) \cdot  \nabla ((\Id - \Po{\k}) \errClem) \dx \Big)^{1/2}  \\
				&\qquad +  ( \stabConst_0^2 \StaE_0((\Id - \Po{\k})\uh, (\Id - \Po{\k})\uh))^{1/2} \Big(\Cstab \intE \reacSym ((\Id - \Po{\k}) \errClem)^2 \dx \Big)^{1/2} \\
			&\leq \Cstab^{1/2} \norm{\errClem}_{1,\E} \left( (\stabConst_1^2 \elipUpperE \StaE_1((\Id - \Po{\k})\uh, (\Id - \Po{\k})\uh))^{1/2}   \right. \\
				&\qquad\qquad\qquad\qquad \left. + (\stabConst_0^2 \norm{\reacSym}_{\infty,\E} h_{\E}^2 \StaE_0((\Id - \Po{\k})\uh, (\Id - \Po{\k})\uh))^{1/2} \right).
	\end{align*}
Combining the last two bounds, along with~\eqref{eq:bounddiff}, we conclude that
	\begin{align*}
		\aE(\uh, \errClem) - \ahE(\uh, \errClem)
			&\leq \norm{\errClem}_{1,\E} \Big(\norm{(\Id - \Po{\k-1}) \diff \Po{\k-1} \nabla \uh}_{0,\E} + \Cproj h_{\E} \norm{(\Id - \Po{\k}) \reacSym \Po{\k} \uh}_{0,\E}   \\
				&\qquad\qquad + \Cstab^{1/2}  \bigg( \Big( \Big( \frac{\elipUpperE}{\elipLowerE}  + \stabConst_1^2 \Big) \elipUpperE \StaE_1((\Id - \Po{\k})\uh, (\Id - \Po{\k})\uh) \Big)^{1/2}  \\
				&\qquad\qquad  + \Big( \Big( \frac{\norm{\reacSym}_{\infty,\E}}{\reacLowerE} + \stabConst_0^2 h_{\E}^2 \Big) \norm{\reacSym}_{\infty,\E} \StaE_0((\Id - \Po{\k})\uh, (\Id - \Po{\k})\uh) \Big)^{1/2} \bigg)\bigg).
	\end{align*}
	The skew-symmetric terms can be treated completely analogously, yielding
	\begin{align*}
		\bE(\uh, \errClem) - \bhE(\uh, \errClem) &\leq \frac{1}{2} \left( \left( \Cpf h_{\E} \norm{\conv}_{\infty,\E} + \Cproj h_{\E} \norm{\conv}_{1,\infty,\E} \right) \norm{\nabla (\Id - \Po{\k}) \uh}_{0,\E}  \right. \\
				&+ \left. \Cproj h_{\E}  \norm{(\Id - \Po{\k}) \conv \Po{\k-1} \nabla \uh}_{0,\E}+ \norm{(\Id - \Po{\k-1}) \conv \Po{\k} \uh}_{0,\E} \right) \norm{\nabla \errClem}_{0,\E},
	\end{align*}
	as $ \norm{(\Id - \Po{\k}) \uh}_{0,\E} \le \Cpf h_{\E} \norm{\nabla(\Id - \Po{\k}) \uh}_{0,\E}$ since $(\Id - \Po{\k}) \uh$ has zero average, and using~\eqref{eq:externalToInternalProjection}.
	The stability bound~\eqref{eq:bounddiff}, further implies
	\begin{align*}
		\bE(\uh, \errClem) - \bhE(\uh, \errClem) &\leq \frac{1}{2} \left( \Cstab^{1/2} ( \Cpf + \Cproj ) h_{\E} \norm{\conv}_{1,\infty,\E} \left(  \StaE_1((\Id - \Po{\k}) \uh, (\Id - \Po{\k}) \uh) \right)^{1/2}  \right. \\
				&+ \left. \Cproj h_{\E}  \norm{(\Id - \Po{\k}) \conv \Po{\k-1} \nabla \uh}_{0,\E}+ \norm{(\Id - \Po{\k-1}) \conv \Po{\k} \uh}_{0,\E} \right) \norm{\nabla \errClem}_{0,\E}.
	\end{align*}
	 Combining the bounds for the symmetric and skew-symmetric terms above, we deduce
	\begin{align*}
		\AE( \uh, \errClem) - \AhE(\uh, \errClem)
			&\leq \norm{\errClem}_{1,\E} \bigg(\norm{(\Id - \Po{\k-1}) \diff \Po{\k-1} \nabla \uh}_{0,\E} + \frac{1}{2} \norm{(\Id - \Po{\k-1}) \conv \Po{\k} \uh}_{0,\E}  \\
				& \qquad+ \Cproj h_{\E} \Big( \frac{1}{2} \norm{(\Id - \Po{\k}) \conv \Po{\k-1} \nabla \uh}_{0,\E} + \norm{(\Id - \Po{\k}) \reacSym \Po{\k} \uh}_{0,\E} \Big)  \\
				&\qquad+ \Cstab^{1/2}  \Big(\tilde{s}_1 \StaE_1((\Id - \Po{\k})\uh, (\Id - \Po{\k})\uh) \Big)^{1/2}  \\
				&\qquad + \Big( \Big( \frac{\norm{\reacSym}_{\infty,\E}}{\reacLowerE} + \stabConst_0^2 h_{\E}^2 \Big) \norm{\reacSym}_{\infty,\E} \StaE_0((\Id - \Po{\k})\uh, (\Id - \Po{\k})\uh) \Big)^{1/2} \bigg),
	\end{align*}
	with
	\begin{align*}
	\tilde{s}_1&:=  \Big( \frac{\elipUpperE}{\elipLowerE} \Big)^2  + \stabConst_1^2 \frac{\elipUpperE}{\elipLowerE} + ( \Cpf + \Cproj )^2 h_{\E}^2 \norm{\conv}_{1,\infty,\E}^2 .
	\end{align*}

	The result then follows by combining the individual bounds above and using the equivalence of the energy- and $H^1$-norms.
\end{proof}

The terms of $\estimator^\E$ echo the standard element and edge residual terms while $\dataest^\E$ are data oscillation terms, familiar from the residual a posteriori analysis of finite element methods~\cite{Verfurth:1996,Ainsworth-Oden:2000,Verfurth:2005}.
In the present virtual context, however, these terms involve only the polynomial part of $\uh$, as they would not be computable for $\uh$ itself.
As a result, remainder terms also appear in the estimator, collected in $\virtualosc^\E$.
The term $\stabest^\E$, on the other hand, provides a computable estimate for the quality of the approximation $\Po{\k} \uh$ of $\uh$.

\begin{remark}
We note that the term $\virtualosc_3^{\E}$ does not vanish when the PDE coefficients are constant, as $\virtualosc_i^{\E}$, $i=1,2,4$ do. It is possible to circumvent this quite easily by modifying the skew-symmetric bilinear form $\bhE$ to use the degree $\k$ projection of the gradient.
The resulting method and the respective (modified) estimators are still computable in just the same manner as the current method (cf.~\cite{UnifiedVEM}), since the virtual element functions are polynomials of degree $\k$ on each edge.
\end{remark}

The estimator of Theorem~\ref{thm:reliability} is also an estimator for the error between $\u$ and the projection of $\uh$, and we have the following result.
\begin{corollary}[Bound for the projected solution]
\label{cor:projectedSolutionBound}
	Let $\eta^{\E}, \virtualosc^{\E}, \stabest^{\E}$ and $\dataest^{\E}$ be the terms of the estimator in Theorem~\ref{thm:reliability}.
	Then,
	\begin{equation*}
		\triplenorm{\u - \Po{\k} \uh}_{\D}^2 \leq C \sum_{\E \in \Th} (\eta^{\E} + \virtualosc^{\E} + \stabest^{\E} + \dataest^{\E}).
	\end{equation*}
\end{corollary}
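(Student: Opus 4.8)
The plan is to reduce the statement to Theorem~\ref{thm:reliability} by a single application of the triangle inequality in the (broken) energy norm, paying only for the replacement of $\uh$ by its elementwise polynomial projection $\Po{\k}\uh$. Writing $\u - \Po{\k}\uh = (\u - \uh) + (\uh - \Po{\k}\uh)$ and reading $\triplenorm{\cdot}_{\D}^2$ as the sum of elemental contributions $\sum_{\E\in\Th}\triplenorm{\cdot}_{\E}^2$ (which is necessary, since $\Po{\k}\uh$ is only a \emph{broken} polynomial and not an element of $H^1_0(\D)$), I would first obtain
\begin{equation*}
  \triplenorm{\u - \Po{\k}\uh}_{\D}^2 \leq 2\,\triplenorm{\u - \uh}_{\D}^2 + 2\,\triplenorm{\uh - \Po{\k}\uh}_{\D}^2 .
\end{equation*}
Theorem~\ref{thm:reliability} already controls the first term by $C\sum_{\E}(\estimator^{\E} + \dataest^{\E} + \stabest^{\E} + \virtualosc^{\E})$, so the entire task is to absorb the second term into the stabilisation contribution $\sum_{\E}\stabest^{\E}$.

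For this I would argue elementwise. The key observation is that $(\Id - \Po{\k})\uh \in \VhE/\PE{\k}$, hence it is an admissible argument for the stabilising forms of Definition~\ref{def:admissibleStabilisingTerm}. Expanding the elemental energy norm and using the sign cancellation in the squares,
\begin{equation*}
  \triplenorm{\uh - \Po{\k}\uh}_{\E}^2 = \intE \nabla(\Id-\Po{\k})\uh \cdot \diff\, \nabla(\Id-\Po{\k})\uh \,\dx + \intE \reacSym\, \big((\Id-\Po{\k})\uh\big)^2 \,\dx .
\end{equation*}
Applying the admissibility bounds of Definition~\ref{def:admissibleStabilisingTerm} to each integral separately then yields
\begin{equation*}
  \triplenorm{\uh - \Po{\k}\uh}_{\E}^2 \leq \Cstab\left( \StaE_{1}((\Id-\Po{\k})\uh,(\Id-\Po{\k})\uh) + \StaE_{0}((\Id-\Po{\k})\uh,(\Id-\Po{\k})\uh) \right).
\end{equation*}

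Finally, since $\StaE = \stabConst_1 \StaE_{1} + \stabConst_0 \StaE_{0}$ with $\StaE_{1},\StaE_{0}$ positive definite and $\stabConst_0,\stabConst_1>0$, and since bilinearity gives $\StaE((\Id-\Po{\k})\uh,(\Id-\Po{\k})\uh) = \StaE((\Po{\k}-\Id)\uh,(\Po{\k}-\Id)\uh) = \stabest^{\E}$, I would bound $\StaE_{1}(\cdot,\cdot) + \StaE_{0}(\cdot,\cdot) \leq (\min\{\stabConst_0,\stabConst_1\})^{-1}\,\stabest^{\E}$, so that $\triplenorm{\uh - \Po{\k}\uh}_{\E}^2 \leq C\,\stabest^{\E}$. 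Summing over $\E\in\Th$ and combining with the triangle inequality above completes the proof. I do not expect any genuine obstacle: the argument is essentially a triangle inequality together with the defining equivalence of the stabilisation. The only point requiring mild care is the one already flagged, namely that $\Po{\k}\uh$ is a discontinuous piecewise polynomial, so the global energy norm must be interpreted as the sum of elemental energy norms rather than the norm of a single conforming function; with that understood, no further regularity or conformity of $\Po{\k}\uh$ is needed.
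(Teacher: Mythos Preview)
Your proposal is correct and follows essentially the same route as the paper: triangle inequality, then bound $\triplenorm{(\Id-\Po{\k})\uh}_{\E}^2$ by $\Cstab(\min\{\stabConst_0,\stabConst_1\})^{-1}\stabest^{\E}$ via Definition~\ref{def:admissibleStabilisingTerm}, and invoke Theorem~\ref{thm:reliability}. Your exposition is in fact more detailed than the paper's, which compresses the whole argument into a single displayed inequality; the observation that $\Po{\k}\uh$ is only piecewise polynomial (so the global energy norm must be read as a sum of elemental contributions) is a valid caveat worth flagging.
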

\begin{proof}
	Using the triangle inequality and the definition of the stabilising term, we have
	\begin{align*}
		\triplenorm{\u - \Po{\k} \uh}_{\D}^2
			&\leq 2 \triplenorm{\u - \uh}_{\D}^2 + \sum_{\E \in \Th} \frac{2 \Cstab}{ \min\{ \stabConst_1, \stabConst_0 \}} \StaE(\uh - \Po{\k} \uh, \uh - \Po{\k} \uh).
	\end{align*}
	The result follows by Theorem~\ref{thm:reliability}.
\end{proof}

\subsection{Lower bound}
We now prove local lower bounds of the error in the energy norm by the a posteriori error estimate.
To this end, we make use of element and edge bubble functions satisfying the bounds of Lemma~\ref{lem:bubEl} and Lemma~\ref{lem:bubEdge} respectively.

\begin{theorem}[Local lower bound]
	\label{thm:localLowerBound}
	Let $\estimator^{\E}, \stabest^{\E}$ and $\dataest^{\E}$ be as in Theorem~\ref{thm:reliability}. Then,
	\begin{align*}
		\estimator^{\E} \leq C\sum_{\E' \in \elpatch} \left(\triplenorm{\u - \uh}_{\E'}^2 + \stabest^{\E'} + \dataest^{\E'} \right),
	\end{align*}
	where $\elpatch := \{\E' \in \Th : \mu_{\spacedim-1}(\dE' \cap \dE) \neq 0\}$ is the patch consisting of the element $\E$ and its neighbours, and $\mu_{\spacedim-1}$ denotes the $(\spacedim-1)$-dimensional measure.
	The constant $C$ depends on $\Cstab, \CequivUp, \Cbub$, $\meshReg$, $\D$ and the PDE coefficients, but is independent of $h$, $\u$ and $\uh$.
	\end{theorem}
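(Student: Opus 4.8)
The plan is to follow the classical residual-based lower-bound strategy of Verf\"urth, driving everything from the residual equation~\eqref{eq:residualEquation} with the choice $\chi=0$ and testing against bubble functions supported on a single element or edge patch, so that only a handful of terms survive. The one genuinely new feature compared to standard finite-element residual estimates is the \emph{virtual inconsistency} term $\nonPolyResid^{\E}(\uh,\cdot)$, which must be absorbed into the stabilisation estimator $\stabest^{\E}$ rather than left on the error side; this is the crux of the argument. Continuity of $\A$ in the energy norm (with constant controlled by $\CequivUp$ and the coefficient bounds) will be used repeatedly, together with the bubble estimates of Lemma~\ref{lem:bubEl} and Lemma~\ref{lem:bubEdge} and the mesh regularity $h_{\E}\simeq h_{\s}$ from Assumption~\ref{ass:meshReg}.

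First, for the element residual I would set $w_\E:=\bubEl\elresid$, extended by zero outside $\E$. Since $\elresid$ is a polynomial (the data have been replaced by polynomial approximations and $\forceh=\Po{\k-1}\force$) and $w_\E$ vanishes on $\dE$, all edge terms and all contributions from neighbouring elements drop out of~\eqref{eq:residualEquation}, leaving
\begin{equation*}
 (\elresid, w_\E)_{\E} = \A(\err, w_\E) - (\eldataresid, w_\E)_{\E} - \nonPolyResid^{\E}(\uh, w_\E).
\end{equation*}
The left-hand side is bounded below by $\Cbub^{-1}\norm{\elresid}_{0,\E}^2$ via Lemma~\ref{lem:bubEl}. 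On the right, continuity of $\A$ controls the first term by $\triplenorm{\err}_{\E}$, Cauchy--Schwarz controls the second by $\norm{\eldataresid}_{0,\E}$, and the inconsistency term is treated by writing $(\Id-\Po{\k-1})\nabla\uh=(\Id-\Po{\k-1})\nabla(\Id-\Po{\k})\uh$ and $(\Id-\Po{\k})\uh=\uh-\Po{\k}\uh$, then invoking~\eqref{eq:externalToInternalProjection} and the admissible-stabiliser bounds~\eqref{eq:bounddiff} to replace $\norm{\nabla(\Id-\Po{\k})\uh}_{0,\E}$ and $\norm{(\Id-\Po{\k})\uh}_{0,\E}$ by $(\stabest^{\E})^{1/2}$. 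Using $\norm{\nabla w_\E}_{0,\E}\le\Cbub h_{\E}^{-1}\norm{\elresid}_{0,\E}$ from Lemma~\ref{lem:bubEl}, dividing by $\norm{\elresid}_{0,\E}$ and multiplying by $h_{\E}$ gives $h_{\E}^2\norm{\elresid}_{0,\E}^2\le C(\triplenorm{\err}_{\E}^2+\stabest^{\E}+\dataest^{\E})$.

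Next, for the edge residual I would take $w_\s:=\bubEdge$ times a fixed polynomial extension of $\edgeresid$ into $\edgepatch=\E^+\cup\E^-$, where $\bubEdge$ vanishes on $\partial\edgepatch$. Then~\eqref{eq:residualEquation} collapses to the single face $\s$ and the two adjacent elements. Isolating $(\edgeresid,w_\s)_{0,\s}$, bounding it below by $\Cbub^{-1}\norm{\edgeresid}_{0,\s}^2$ (Lemma~\ref{lem:bubEdge}), and estimating the remaining terms as above --- now with $\norm{w_\s}_{0,\E}\le\Cbub h_{\E}^{1/2}\norm{\edgeresid}_{0,\s}$ and $\norm{\nabla w_\s}_{0,\E}\le\Cbub h_{\E}^{-1/2}\norm{\edgeresid}_{0,\s}$, the element-residual bound just obtained for $\E^{\pm}$, and $h_{\E}\simeq h_{\s}$ --- yields $h_{\s}\norm{\edgeresid}_{0,\s}^2\le C\sum_{\E'\in\edgepatch}(\triplenorm{\err}_{\E'}^2+\stabest^{\E'}+\dataest^{\E'})$. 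Summing the element contribution for $\E$ and the edge contributions over the at most $\nuE$ interfaces of $\E$, whose element patches union to $\elpatch$, produces the asserted bound.

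The step I expect to be the main obstacle is exactly the handling of $\nonPolyResid^{\E}$, which has no counterpart in standard FEM lower bounds; the observation that unlocks it is that each of its three pieces carries a factor $(\Id-\Po{\k-1})\nabla\uh$ or $(\Id-\Po{\k})\uh$ that, by~\eqref{eq:externalToInternalProjection} and~\eqref{eq:bounddiff}, is dominated by $(\stabest^{\E})^{1/2}$, so it contributes to the estimator rather than polluting the lower bound. A secondary technical point is that Lemma~\ref{lem:bubEl} and Lemma~\ref{lem:bubEdge} are stated for $\PE{\k}$, whereas $\elresid$ and $\edgeresid$ lie in $\PE{\k}$ only if the polynomial data approximations in~\eqref{eq:dataApproximation} are taken of degree at most $\k$ (as they are for constant coefficients); otherwise one simply invokes the bubble estimates for the relevant fixed polynomial degree, the constants then depending only on that degree and the mesh regularity, which is already permitted by the stated dependence of $C$.
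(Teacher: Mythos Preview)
Your proposal is correct and follows essentially the same route as the paper: test the residual equation~\eqref{eq:residualEquation} with $\chi=0$ against interior and edge bubble functions, invoke Lemmas~\ref{lem:bubEl} and~\ref{lem:bubEdge}, and absorb the virtual inconsistency term $\nonPolyResid^{\E}$ into $(\stabest^{\E})^{1/2}$ via~\eqref{eq:externalToInternalProjection} and~\eqref{eq:bounddiff}, exactly as in the derivation of~\eqref{eq:nonPolyResidUpperBound}. The paper also makes explicit the constant prolongation of $\edgeresid$ in the direction normal to the face and the observation $\elresid\in\PE{\k+q}$, both of which you touch on; your treatment of the polynomial-degree issue is precisely what the paper intends.
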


\begin{proof}
	First observe that $\elresid \in \PE{\k+q}$ for some $q\in \mathbb{N}\cup\{0\}$ representing the degree of the polynomials used for the data approximations in~\eqref{eq:dataApproximation}.
	From~\eqref{eq:residualEquation} with $\chi=0$ and the fact that $\bubEl|_{\dE} = 0$, we deduce
	\begin{align*}
		\A(\err, \bubEl \elresid) &= ( \elresid, \bubEl \elresid )_\E + ( \eldataresid, \bubEl \elresid )_\E + \nonPolyResid^{\E}(\uh, \bubEl \elresid).
	\end{align*}
	Arguing as in~\eqref{eq:nonPolyResidUpperBound}, with $\bubEl \elresid$ in place of $\err-\errClem$, we find that
	\begin{align*}
		\nonPolyResid^{\E}(\uh, \bubEl \elresid) \leq C (\stabest^{\E})^{\frac{1}{2}}\norm{\bubEl \elresid}_{1, \E},
	\end{align*}
  	and consequently, using the properties of the interior bubble functions given in Lemma~\ref{lem:bubEl},
	\begin{align}
		C\norm{\elresid}^2_{0,\E} &\leq ( \elresid, \bubEl \elresid )_{\E} \notag\\
		&= \AE(\err, \bubEl \elresid)
		- ( \eldataresid, \bubEl \elresid )_{\E}
		- \nonPolyResid^{\E}(\uh, \bubEl \elresid) \notag \\
			&\leq C \left(\triplenorm{\err}_{\E}
			+ (\stabest^{\E})^{\frac{1}{2}}\right) \norm{\bubEl \elresid}_{1, \E}
			+\norm{\eldataresid}_{0,\E} \norm{\bubEl \elresid}_{0,\E}. \notag
	\end{align}
	Using Lemma~\ref{lem:bubEl} again this becomes
	\begin{align*}
		C \norm{\elresid}^2_{0,\E} &\leq h_{\E}^{-1}
		 \left( \triplenorm{\err}_{\E}
		+ (\stabest^{\E})^{\frac{1}{2}} \right) \norm{\elresid}_{0,\E}
		+ \norm{\eldataresid}_{0,\E} \norm{\elresid}_{0,\E},
	\end{align*}
	and therefore we arrive at
	\begin{align*}
		Ch_{\E}^2 \norm{\elresid}_{0,\E}^2 &\leq
		\triplenorm{\err}_{\E}^2
		+ \stabest^{\E}
		+ h_{\E}^2\norm{\eldataresid}_{0,\E}^2.
	\end{align*}

For the face residual, we start by extending $\edgeresid$ into $\edgepatch$ through a constant prolongation in the direction normal to the face $s$, yielding $\edgeresid \in \P{\edgepatch}{\k} \subset V_h^{\edgepatch} := V_h^{\E^+} \cup V_h^{\E^-}$ with $\E^+ \cap \E^- = \edge$.
	Then,~\eqref{eq:residualEquation} gives
	\begin{align*}
		\A(\err, \bubEdge \edgeresid ) =& \sum_{\E'\in\edgepatch} \left[
		( \elresidd, \bubEdge \edgeresid )_{\E'} + ( \eldataresidd, \bubEdge \edgeresid )_{\E'} + \nonPolyResid^{\E}(\uh, \bubEdge \edgeresid) \right]
		- \left( \edgeresid, \bubEdge \edgeresid \right)_{0,\s} - \left( \edgedataresid, \bubEdge \edgeresid \right)_{0,\s}.
	\end{align*}
	Arguing as before and using Lemma~\ref{lem:bubEdge}, we find that
	\begin{align*}
		C\norm{\edgeresid}_{0,\edge}^2 &\leq \norm{\edgedataresid}_{0,\edge} \norm{\bubEdge \edgeresid}_{0,\edge} + \sum_{\E' \in \edgepatch} \Big[ \left( \triplenorm{\err}_{\E'}
			 + (\stabest^{\E'})^{\frac{1}{2}} \right) \norm{\bubEdge \edgeresid}_{1,\E'} +  \left( \norm{\elresid}_{0,\E'} + \norm{\eldataresid}_{0,\E'} \right) \norm{\bubEdge \edgeresid}_{0,\E'}  \Big].
	\end{align*}
	Applying Lemma~\ref{lem:bubEdge} again, using the bound for the element residual, and multiplying by $h_s^{1/2}$, we obtain
	\begin{align*}
		C h_s\norm{\edgeresid}_{0,\edge}^2
					&\leq h_s\norm{\edgedataresid}_{0,\edge}^2 + \sum_{\E'\in\edgepatch} \left[ (h_s/h_{\E'})^{1/2} \left( \triplenorm{\err}_{\E'}^2 + \stabest^{\E'} \right) + (h_s h_{\E'})^{1/2}  \norm{\eldataresidd}_{0,\E'}^2 \right] .
	\end{align*}
	Using Assumption~\ref{ass:meshReg} and putting these bounds together completes the proof.
\end{proof}

This local lower bound then immediately provides a corresponding
global lower bound, by simply summing the local estimates over the
whole of $\Th$.
Furthermore, Theorem~\ref{thm:localLowerBound} and triangle inequality also provide us with the following lower bound on the error between the solution $\u$ and the projected virtual element solution $\Po{\k} \uh$.
\begin{corollary}[Lower bound for the projected solution]
	Let $\estimator^{\E}, \stabest^{\E}$ and $\dataest^{\E}$ be defined as in Theorem~\ref{thm:reliability}. Then,
	\begin{align*}
		\estimator^{\E} \leq C\sum_{\E' \in \elpatch} \left(\triplenorm{\u - \Po{\k} \uh}_{\E'}^2 + \stabest^{\E'} + \dataest^{\E'} \right),
	\end{align*}
	where $\elpatch := \{\E' \in \Th : \mu_{\spacedim-1}(\dE' \cap \dE) \neq 0\}$ is the patch consisting of the element $\E$ and its neighbours, and $\mu_{\spacedim-1}$ denotes the $(\spacedim-1)$-dimensional measure.
\end{corollary}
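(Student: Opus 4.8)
The plan is to obtain this estimate as an immediate consequence of the local lower bound of Theorem~\ref{thm:localLowerBound}, simply exchanging the error $\triplenorm{\u - \uh}_{\E'}$ for $\triplenorm{\u - \Po{\k}\uh}_{\E'}$ on each element of the patch $\elpatch$. Theorem~\ref{thm:localLowerBound} already provides
\[
  \estimator^{\E} \leq C\sum_{\E' \in \elpatch} \left(\triplenorm{\u - \uh}_{\E'}^2 + \stabest^{\E'} + \dataest^{\E'} \right),
\]
so it remains only to control the genuine error by the projected error plus the stabilisation term already present on the right-hand side.

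First I would apply the triangle inequality in the energy norm on each $\E'\in\elpatch$ and square, using $(a+b)^2\le 2a^2+2b^2$, to obtain
\[
  \triplenorm{\u - \uh}_{\E'}^2 \le 2\triplenorm{\u - \Po{\k}\uh}_{\E'}^2 + 2\triplenorm{(\Id - \Po{\k})\uh}_{\E'}^2.
\]
The only point requiring a short argument is the bound on the remainder term $\triplenorm{(\Id - \Po{\k})\uh}_{\E'}^2$ by the stabiliser $\stabest^{\E'}$. This is exactly the estimate already employed in Corollary~\ref{cor:projectedSolutionBound}: recalling the definition~\eqref{eq:energyNorm} of the energy norm together with $\StaE = \stabConst_1 \StaE_{1} + \stabConst_0 \StaE_{0}$, the lower bounds of Definition~\ref{def:admissibleStabilisingTerm} applied to $(\Id - \Po{\k})\uh \in \VhE/\PE{\k}$ yield
\[
  \triplenorm{(\Id - \Po{\k})\uh}_{\E'}^2 \le \frac{\Cstab}{\min\{\stabConst_1,\stabConst_0\}}\,\StaE((\Id - \Po{\k})\uh,(\Id - \Po{\k})\uh) = \frac{\Cstab}{\min\{\stabConst_1,\stabConst_0\}}\,\stabest^{\E'}.
\]

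Finally I would substitute these two bounds back into the inequality of Theorem~\ref{thm:localLowerBound}. Since $\stabest^{\E'}$ already appears on its right-hand side, the additional contribution of $\triplenorm{(\Id - \Po{\k})\uh}_{\E'}^2$ is simply absorbed into the constant $C$, giving the claimed estimate. There is no substantive obstacle here, as the statement is a genuine corollary; the only care needed is to invoke the stabilisation estimate with the correct scaling $\Cstab/\min\{\stabConst_1,\stabConst_0\}$, precisely as in Corollary~\ref{cor:projectedSolutionBound}.
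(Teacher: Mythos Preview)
Your proposal is correct and matches the paper's own approach exactly: the paper states only that the corollary follows from Theorem~\ref{thm:localLowerBound} and the triangle inequality, and your argument supplies precisely those details, including the same stabilisation bound $\triplenorm{(\Id - \Po{\k})\uh}_{\E'}^2 \le \tfrac{\Cstab}{\min\{\stabConst_1,\stabConst_0\}}\,\stabest^{\E'}$ used in Corollary~\ref{cor:projectedSolutionBound}.
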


In addition to a lower bound for the residual part of the estimator,
$\estimator^\E$, we have the following control on the virtual inconsistency terms $\virtualosc^E$ indicating that these are also of optimal order up to data oscillation.

\begin{lemma}[Lower bound for the inconsistency terms]\label{lemma_lower_bound_incon}
 We have
  \begin{equation*}
    \begin{split}
      \virtualosc^{\E}
      &\leq
      C \bigg(
      \triplenorm{\u - \uh}^2_{\E}
      +
      \stabest^{\E}      +
      \norm{(\Po{\k -1} - \Id)\diff \nabla u}^2_\E
      +
      h_E^2 \norm{(\Po{\k} - \Id)\conv \cdot \nabla u}^2_\E
      \\
      &\qquad\qquad
      +
      \norm{(\Po{\k-1} - \Id)\conv u}^2_\E
      +
      h_E^2 \norm{(\Po{\k} - \Id)\reacSym  u}^2_\E
      \bigg).
  \end{split}
  \end{equation*}
\end{lemma}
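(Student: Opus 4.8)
The plan is to bound each of the four inconsistency terms $\virtualosc_i^{\E}$ separately, in every case by adding and subtracting the corresponding \emph{exact} quantity (built from $\u$) in place of the projected discrete quantity (built from $\uh$). The continuous part of each decomposition will produce precisely one of the four data-oscillation terms on the right-hand side, while the remainder will be absorbed into $\triplenorm{\u-\uh}_{\E}^2$ and $\stabest^{\E}$. Three ingredients will be used throughout: the $L^2$-stability of $\Id-\Po{\ell}$, whose operator norm is at most $1$ as $\Po{\ell}$ is an orthogonal projection; the identity~\eqref{eq:externalToInternalProjection}, which lets me pass from $(\Id-\Po{\k-1})\nabla\uh$ to $\nabla(\Id-\Po{\k})\uh$; and the admissibility bounds~\eqref{eq:bounddiff}, combined with $\stabConst_1\StaE_1\le\StaE$ and $\stabConst_0\StaE_0\le\StaE$, so that both $\norm{\nabla(\Id-\Po{\k})\uh}_{0,\E}^2$ and $\norm{(\Id-\Po{\k})\uh}_{0,\E}^2$ are controlled by $\stabest^{\E}$.

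First I would establish the key auxiliary estimate for the gradient discrepancy. Writing
\[
\Po{\k-1}\nabla\uh-\nabla\u = -(\Id-\Po{\k-1})\nabla\uh - \nabla(\u-\uh),
\]
the first term is bounded, via~\eqref{eq:externalToInternalProjection} and then~\eqref{eq:bounddiff}, by a constant multiple of $(\stabest^{\E})^{1/2}$, while the ellipticity~\eqref{eq:diffEllipticity} bounds $\norm{\nabla(\u-\uh)}_{0,\E}$ by $\elipLowerE^{-1/2}\triplenorm{\u-\uh}_{\E}$. An entirely analogous estimate holds for the value discrepancy
\[
\Po{\k}\uh-\u = -(\Id-\Po{\k})\uh - (\u-\uh),
\]
where the stabilisation is now controlled through the reaction part of~\eqref{eq:bounddiff} and the energy norm dominates $\norm{\u-\uh}_{0,\E}$ via $\reacLowerE^{-1/2}$.

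Each inconsistency term then needs a single decomposition. For $\virtualosc_1^{\E}$ I would write $\diff\Po{\k-1}\nabla\uh=\diff\nabla\u+\diff(\Po{\k-1}\nabla\uh-\nabla\u)$; the $L^2$-stability of $\Id-\Po{\k-1}$ leaves $\norm{(\Po{\k-1}-\Id)\diff\nabla\u}_{0,\E}^2$ together with a remainder bounded by $\elipUpperE^2\norm{\Po{\k-1}\nabla\uh-\nabla\u}_{0,\E}^2$, which is precisely the gradient discrepancy above. I would treat $\virtualosc_2^{\E}$ identically, decomposing $\conv\cdot\Po{\k-1}\nabla\uh$ around $\conv\cdot\nabla\u$ and estimating with $\norm{\conv}_{\infty,\E}$, the factor $h_{\E}^2$ carrying through. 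For $\virtualosc_3^{\E}$ and $\virtualosc_4^{\E}$ I would decompose $\conv\Po{\k}\uh$ and $\reacSym\Po{\k}\uh$ around $\conv\u$ and $\reacSym\u$, invoking the value discrepancy instead of the gradient one. Summing the four bounds and absorbing the (bounded) powers of $h_{\E}$ along with the coefficient and stabilisation constants into a single $C$ would yield the claim.

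I expect the main obstacle to lie not in any individual estimate but in guaranteeing that every remainder falls into only $\triplenorm{\u-\uh}_{\E}^2$ and the computable $\stabest^{\E}$, rather than into further non-computable quantities. This is exactly where~\eqref{eq:externalToInternalProjection} is indispensable: it converts the projection defect of $\nabla\uh$ into the projection defect of $\uh$ itself, which is what $\stabest^{\E}$ measures through the admissible forms. Some care with the powers of $h_{\E}$ will also be required, so that the $h_{\E}^2$ factors in $\virtualosc_2^{\E}$ and $\virtualosc_4^{\E}$ match those multiplying the convection and reaction oscillation terms on the right-hand side.
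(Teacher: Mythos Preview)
Your proposal is correct and follows essentially the same approach as the paper: decompose each term around the corresponding exact quantity, use the $L^2$-stability of $\Id-\Po{\ell}$ to isolate the data-oscillation term, and control the remainder via~\eqref{eq:externalToInternalProjection} and~\eqref{eq:bounddiff}. The paper only spells out $\virtualosc_1^{\E}$ (splitting it into three pieces rather than two, but with the same net effect) and declares the other three ``completely analogous''; your write-up is, if anything, slightly more explicit about how those analogues go.
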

\begin{proof}
  We have, respectively,
  \begin{equation*}
    \begin{split}
      \virtualosc_1^E
      &\leq
      2
      \bigg(
      \norm{\Po{\k-1} (\diff (\Po{\k-1} \nabla \uh - \nabla u))}_{0,\E}^2
      +
      \norm{\Po{\k-1} (\diff \nabla u) - \diff \nabla u}_{0,\E}^2
      +
      \norm{\diff  \nabla u - \diff \Po{\k-1} \nabla u_h}_{0,\E}^2
      \bigg)
      \\
      &\leq
      2\norm{\Po{\k-1} (\diff \nabla u) - \diff \nabla u}_{0,\E}^2
      +
      4\norm{\diff ( \nabla u -  \Po{\k-1} \nabla u_h)}_{0,\E}^2\\
      &\leq
      2\norm{\Po{\k-1} (\diff \nabla u) - \diff \nabla u}_{0,\E}^2
      +
      8{\elipUpper}^2 \norm{\nabla u - \nabla \uh}_{0,\E}^2
      +
      8{\elipUpper}^2 \norm{\nabla \uh - \Po{\k-1} \nabla \uh}_{0,\E}^2,
    \end{split}
  \end{equation*}
  using the stability of the $\LTWO$ projection operator and~\eqref{eq:diffEllipticity}.
  Using~\eqref{eq:externalToInternalProjection} and~\eqref{eq:bounddiff} the
  final term can be controlled by the stabilising term, resulting in the required bound.
  A completely analogous argument can be applied to each of the remaining
  terms of $\virtualosc^{\E}$.
\end{proof}

\section{Numerical Results}
\label{sec:numerics}

\pgfplotsset{width=11cm}

\input{figures/plot_data}
\pgfplotstableread{
Cells1	nDofs1	val1	Rate1	Cells2	nDofs2	val2	Rate2	Cells3	nDofs3	val3	Rate3	Cells4	nDofs4	val4	Rate4
25		96	0.5970825683600859	0.000	25		241	0.22803898240329123	0.000	25		411	0.06071527467766062	0.000	25		606	0.014027429025538742	0.000
100		341	0.2980074283745808	-0.548	100		881	0.06414104480749161	-0.979	100		1521	0.008914257507766936	-1.466	100		2261	0.0009081701459616874	-2.079
400		1281	0.14869039962568673	-0.525	400		3361	0.016719356076655884	-1.004	400		5841	0.0011678487533517995	-1.511	400		8721	5.7274609608352595e-5	-2.047
1600		4961	0.07425016473262251	-0.513	1600		13121	0.004247146533771301	-1.006	1600		22881	0.00014787039819526821	-1.514	1600		34241	3.5882645282711764e-6	-2.025
6400		19521	0.03709807265650263	-0.507	6400		51841	0.0010690061703284312	-1.004	6400		90561	1.855516845182898e-5	-1.509	6400		135681	2.2441751525824653e-7	-2.013
25600		77441	0.018541748225475073	-0.503	25600		206081	0.0002680797907016505	-1.002	25600		360321	2.3224601497147242e-6	-1.505	25600		540161	1.4211831839495209e-8	-1.997
}{\UniformHOneErrorPlotData}

\pgfplotstableread{
Cells1	nDofs1	val1	Rate1	Cells2	nDofs2	val2	Rate2	Cells3	nDofs3	val3	Rate3	Cells4	nDofs4	val4	Rate4
25		96	3.307487067977175	0.000	25		241	0.7976643983434503	0.000	25		411	0.13706555672384996	0.000	25		606	0.018150049420494654	0.000
100		341	1.6673579024409115	-0.540	100		881	0.20187842255684177	-1.060	100		1521	0.017330767640168627	-1.580	100		2261	0.0011517387955884142	-2.094
400		1281	0.836232259242347	-0.521	400		3361	0.050556816869279195	-1.034	400		5841	0.002181246127144656	-1.540	400		8721	7.29447150250526e-5	-2.044
1600		4961	0.4186545281664705	-0.511	1600		13121	0.012639508273383452	-1.018	1600		22881	0.00027335496872372294	-1.521	1600		34241	4.5933240896582405e-6	-2.022
6400		19521	0.20944927940058664	-0.506	6400		51841	0.00315937949922911	-1.009	6400		90561	3.4198969883811136e-5	-1.511	6400		135681	2.8811178052558463e-7	-2.011
25600		77441	0.10475354153104582	-0.503	25600		206081	0.0007897536606941648	-1.005	25600		360321	4.27618691078763e-6	-1.506	25600		540161	1.8216565867854837e-8	-1.998
}{\UniformEstimatorConvergencePlotData}

\input{figures/jumping_coeff_plot_data}

We present a series of numerical experiments aimed at
testing the practical behaviour of the estimator derived in Theorem
\ref{thm:reliability}. In addition, we propose an adaptive algorithm
based on the estimator which is applied to a variety of test problems.

The above analysis is valid by only requiring a set of abstract assumptions for the stabilisation forms $\StaE_{1}$, $\StaE_{0}$ and for the projector $\spaceProj{\k}$ (which in turn defines the space $\VhE$ through~\eqref{eq:localVEMSpace}), giving rise to a number of possibilities.
Here, we focus on a specific scheme by providing precise choices for $\spaceProj{\k}$, $\StaE_{1}$ and $\StaE_{0}$.
Define the bilinear form $\altStaE: \VhE \times \VhE \rightarrow \Re$ by
\begin{equation*}
	\altStaE(\vh,\wh) := \sum_{r = 1}^{\NE} \dof_r(\vh) \dof_r(\wh),
\end{equation*}
with $\dof_r(\wh)$ denoting the value of the $r^{\text{th}}$ local degree of freedom of $\wh$ with respect to an arbitrary but fixed ordering of the degrees of freedom on the element $\E$.
This bilinear form corresponds to the Euclidean inner product on the space $\Re^{\NE}$ consisting of vectors of degrees of freedom.
Following~\cite[Section 4.1]{UnifiedVEM}, we define $\spaceProj{\k}$ to be the orthogonal projection onto the polynomial space $\PE{\k}$ with respect to $\altStaE(\cdot, \cdot)$, and
we fix
\begin{equation}\label{eq:scaling}
\begin{aligned}
\StaE_{1}(\uh,\vh) := \overline{\diff}_\E h_{\E}^{\spacedim - 2} \altStaE(\uh,\vh), \qquad
\StaE_{0}(\uh,\vh) := \overline{\reacSym}_\E h_{\E}^{\spacedim} \altStaE(\uh,\vh),
\end{aligned}
\end{equation}
where $\overline{\diff}_\E$, and $\overline{\reacSym}_\E$ are some constant approximations of $\diff$, and $\reacSym$ over $\E$ (e.g., local averages), respectively, resulting in
\begin{equation*}
\StaE(\uh,\vh):=h^{\spacedim-2}_\E \left(\stabConst_0 \overline{\diff}_\E + h_{E}^2 \stabConst_1 \overline{\reacSym}_\E \right)
\,
\altStaE(\uh,\vh).
\end{equation*}
\begin{remark}Note that the internal degrees of freedom of $(\Id - \Po{\k}) \vh$
  are equal to zero, and hence the above stablilising term reduces to a term active
  only on the mesh skeleton.
\end{remark}

\subsection{Uniformly generated meshes}
\label{sec:ugm}
As a first test to verify the asymptotic behaviour of the estimator, we consider the test problem
\begin{equation}\label{example_smooth}
  \begin{split}
  -\Delta \u = \force \text{ in } \D,
  \qquad
  u = 0 \text{ on } \partial\D,
  \end{split}
\end{equation}
for $\D = (0,1)^2$, and fix $\force$ such
that the exact solution is given by
$
	u(x,y) = \sin(\pi x) \sin(\pi y),
$
on a uniformly generated sequence of meshes consisting of \emph{non-convex}
polygonal elements.
The first two meshes in the uniform sequence are shown in
Fig.~\ref{fig:num:uniform:meshes}.
Fig.~\ref{fig:num:uniform:errorPlots}
depicts the convergence history of the $H^1(\D)$-seminorm error and of the estimator on this sequence of meshes, indicating that both converge at the optimal rate for polynomial degrees $\k=1,2$ and $3$.
The effectivity of the estimator is defined by
\begin{align}
\text{effectivity}:=	\frac{
          \left(
            \sum_{\E \in \Th}
            \estimator^{\E}
            +
            \dataest^{\E}
            +
            \virtualosc^{\E}
            +
            \stabest^{\E}
          \right)^{\frac{1}{2}}}
        {\norm{\nabla(\u - \Po{\k}\uh)}_{0,\D}},
	\label{eq:num:effectivity}
\end{align}
with $\estimator^\E, \dataest^\E, \stabest^{\E}$ and $\virtualosc^\E$ as in Theorem~\ref{thm:reliability}. Asymptotically the effectivity becomes constant throughout the mesh sequence, tending to approximately 5.7 for $\k=1$, 3 for $\k=2$, and 1.84 for $\k=3$.

\begin{figure}[h!]
\subcaptionbox{The first mesh, with 25 elements. \label{fig:num:uniform:nonconvex5}}[.45\textwidth]{%
	\includegraphics[width=.3\textwidth]{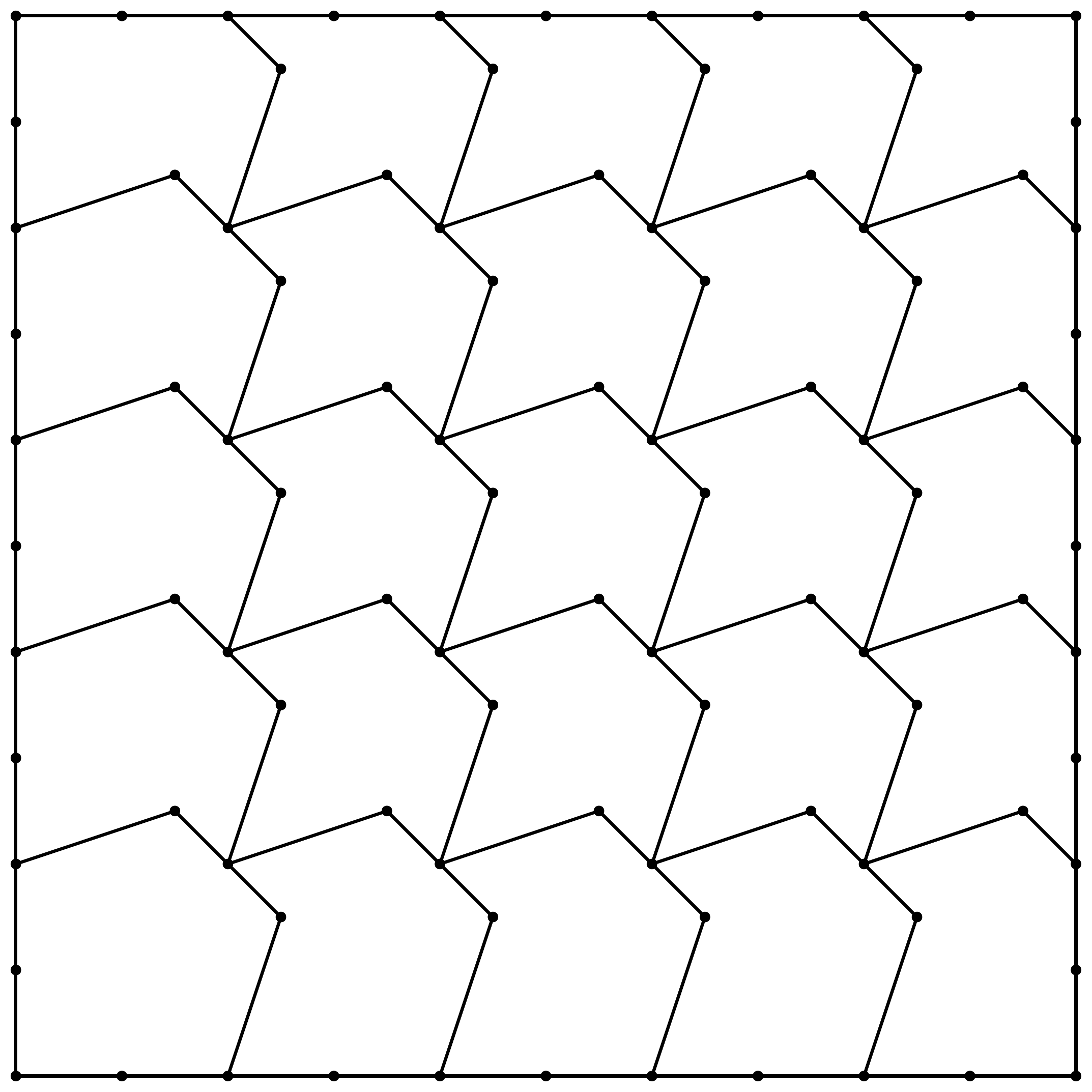}%
}\hfill%
\subcaptionbox{The second mesh with 100 elements. \label{fig:num:uniform:nonconvex10}}[.45\textwidth]{%
\includegraphics[width=.3\textwidth]{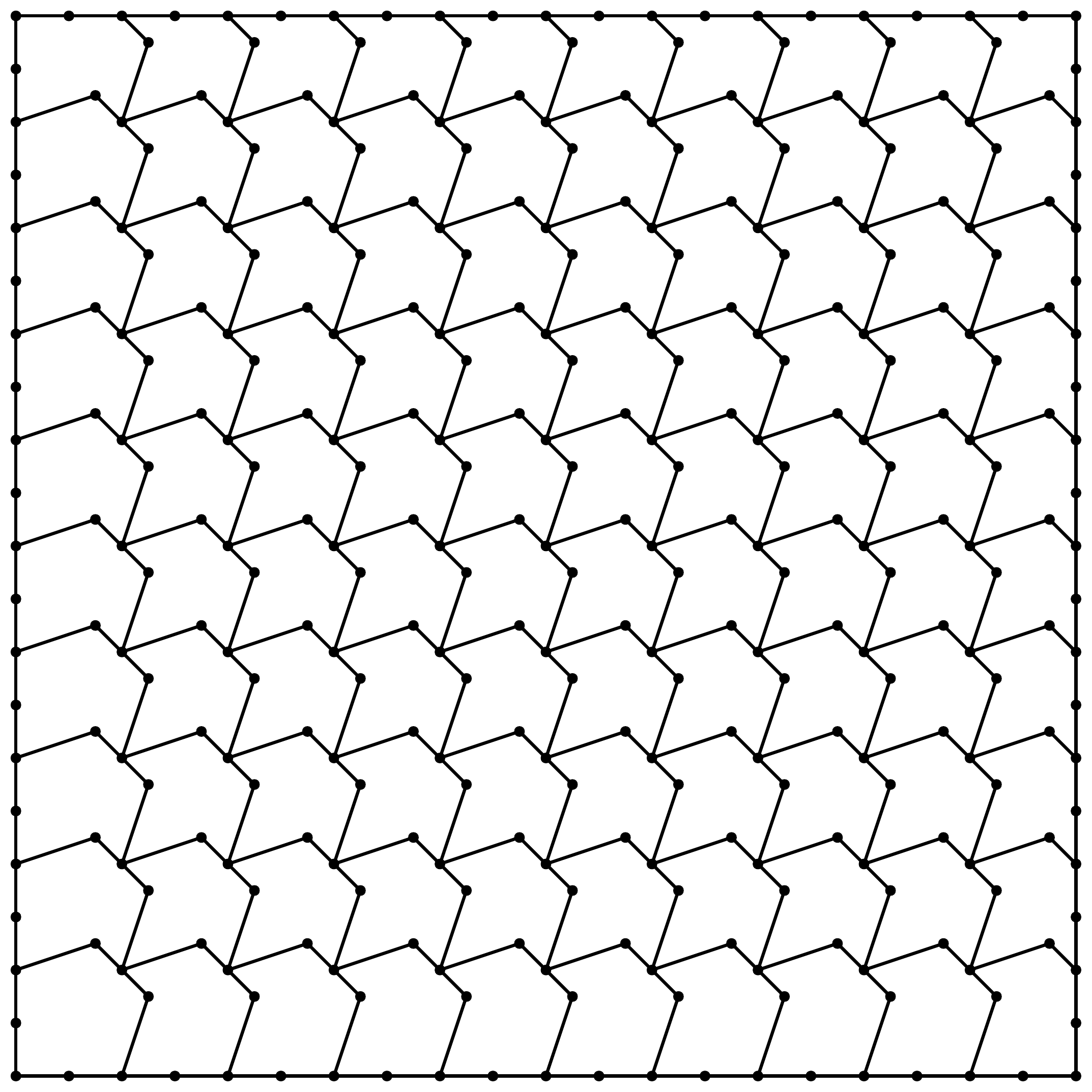}%
}
\caption{
  The first two non-convex, in general, meshes used in the uniform sequence described in \S\ref{sec:ugm}. Vertices are marked with a dot, and may separate coplanar
  edges.}
  \label{fig:num:uniform:meshes}
\end{figure}

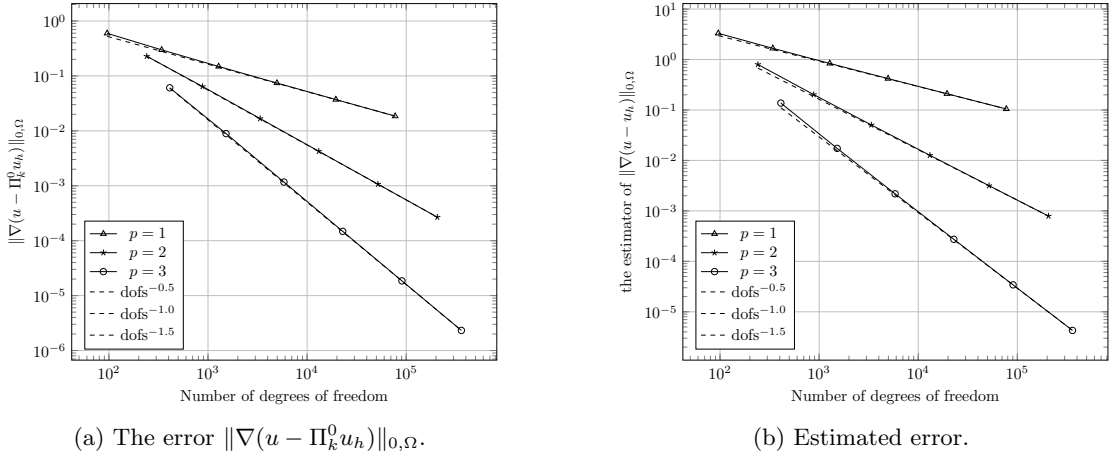
\begin{figure}[h!]
  \subcaptionbox{The error $\|\nabla(u - \Pi^0_k u_h ) \|_{0,\Omega}$.\label{fig:num:uniform:HOneError}}[.5\textwidth]{%
\begin{tikzpicture}[scale=0.6]
	\begin{loglogaxis}[xlabel={Number of degrees of freedom},ylabel={$\|\nabla(u - \Pi^0_k u_h ) \|_{0,\Omega}$},grid=major,legend entries={$\k = 1$, $\k = 2$, $\k = 3$, dofs$^{-0.5}$, dofs$^{-1.0}$, dofs$^{-1.5}$, }, legend pos=south west]
		\addplot[mark=triangle] table[x index = 1, y index = 2] from \UniformHOneErrorPlotData;
		\addplot[mark=star] table[x index = 5, y index = 6] from \UniformHOneErrorPlotData;
		\addplot[mark=o] table[x index = 9, y index = 10] from \UniformHOneErrorPlotData;
		\addplot[dashed] coordinates {(96, 0.5266238688096305) (77441.0, 0.018541748225475073)};
		\addplot[dashed] coordinates {(241, 0.22923714252110736) (206081.0, 0.0002680797907016505)};
		\addplot[dashed] coordinates {(411, 0.060286476132828566) (360321.0, 2.3224601497147242e-6)};
\end{loglogaxis}
\end{tikzpicture}}%
  \hfill
\subcaptionbox{Estimated error.}[.5\textwidth]{%
\begin{tikzpicture}[scale=0.6]
	\begin{loglogaxis}[xlabel={Number of degrees of freedom},ylabel={the estimator of $\|\nabla(u - u_h ) \|_{0,\Omega}$},grid=major,legend entries={$\k = 1$, $\k = 2$, $\k = 3$, dofs$^{-0.5}$, dofs$^{-1.0}$, dofs$^{-1.5}$}, legend pos=south west]
		\addplot[mark=triangle] table[x index = 1, y index = 2] from \UniformEstimatorConvergencePlotData;
		\addplot[mark=star] table[x index = 5, y index = 6] from \UniformEstimatorConvergencePlotData;
		\addplot[mark=o] table[x index = 9, y index = 10] from \UniformEstimatorConvergencePlotData;
		\addplot[dashed] coordinates {(96, 2.9752165028751603) (77441.0, 0.10475354153104582)};
		\addplot[dashed] coordinates {(241, 0.6753245815332544) (206081.0, 0.0007897536606941648)};
		\addplot[dashed] coordinates {(411, 0.11100136214108064) (360321.0, 4.27618691078763e-6)};
\end{loglogaxis}
\end{tikzpicture}}
  \caption{\label{fig:num:uniform:errorPlots}
    Convergence history of the $H^1(\D)$-seminorm error and estimator for the example~\eqref{example_smooth} on the meshes shown in Fig.~\ref{fig:num:uniform:meshes}.}
\end{figure}

\subsection{Adaptive refinement}
We shall use a typical adaptive algorithm for elliptic problems reading: \emph{solve$\to$estimate$\to$mark$\to$refine}. In this context, given a polygonal subdivision of $\Omega$, one \emph{solves} the VEM problem, \emph{estimates} the error using the a posteriori error bound (Theorem~\ref{thm:reliability}), \emph{marks} a subset of elements for refinement and, subsequently, \emph{refines}. The  D\"orfler/bulk marking strategy is used below for the \emph{mark} step,
marking the subset of mesh elements $\mathcal{M} \subset \Th$ with the largest estimated errors such that
\begin{equation}
	\Big( \sum_{\E \in \mathcal{M}} \estimator^\E + \dataest^\E + \virtualosc^{\E} + \stabest^{\E} \Big)^{\frac{1}{2}} \leq \theta \Big( \sum_{\E \in \Th} \estimator^\E + \dataest^\E + \virtualosc^{\E} + \stabest^{\E} \Big)^{\frac{1}{2}},
	\label{eq:dorflermarking}
\end{equation}
for some $\theta \in (0,1)$. Here, we pick $\theta = 0.4$.

To refine a polygonal element we divide elements by connecting the midpoint of each planar element face to its barycentre; see  Fig.~\ref{fig:num:polygonRefinement} for an illustration for a hexagon. Note that this strategy simply reduces to the standard refinement strategy for a square element.
\begin{figure}[h!]
  \centering
  \begin{tikzpicture}[rotate=-45, scale=0.7]
    \node[shape=coordinate] (barycentre) at (1,3.3) {};
    % Inner square vertices
		\node[circle, fill, scale=0.4, label=right:{$P$}, rotate=-45] (v1) at (0.5,0.5) {};
		\node[circle, fill, scale=0.4, label=right:{$Q$}, rotate=-45] (v2) at (1.75,1.35) {};
		\node[circle, fill, scale=0.4, label=right:{$R$}, rotate=-45] (v3) at (3,2.2) {};
		\node[circle, fill, scale=0.4] (v4) at (4,5) {};
		\node[circle, fill, scale=0.4] (v5) at (1,6) {};
		\node[circle, fill, scale=0.4] (v6) at (-1,4) {};
		\node[circle, fill, scale=0.4] (v7) at (-1.3,2) {};
		\node[shape=coordinate] (o1) at (0.5,0) {};
		\node[shape=coordinate] (o2) at (1.95,0.95) {};
		\node[shape=coordinate] (o3) at (3.4,1.9) {};
		\node[shape=coordinate] (o4) at (4.4,5.2) {};
		\node[shape=coordinate] (o5) at (0.9,6.4) {};
		\node[shape=coordinate] (o6) at (-1.4,4.2) {};
		\node[shape=coordinate] (o7) at (-1.7,1.8) {};
		%
		% Draw the element
		\draw (v1) -- (v2) -- (v3) -- (v4) -- (v5) -- (v6) -- (v7) -- (v1);
		%
		% Draw the edges hinting that there is more of the mesh
		\draw (v1) -- (o1);
		\draw (v2) -- (o2);
		\draw (v3) -- (o3);
		\draw (v4) -- (o4);
		\draw (v5) -- (o5);
		\draw (v6) -- (o6);
		\draw (v7) -- (o7);
		%
		% Draw the lines from the midpoint of each side to the barycentre
		\draw[dashed] (v2) -- (barycentre);
		\draw[dashed] ($0.5*(v3)+0.5*(v4)$) -- (barycentre);
		\draw[dashed] ($0.5*(v4)+0.5*(v5)$) -- (barycentre);
		\draw[dashed] ($0.5*(v5)+0.5*(v6)$) -- (barycentre);
		\draw[dashed] ($0.5*(v6)+0.5*(v7)$) -- (barycentre);
		\draw[dashed] ($0.5*(v7)+0.5*(v1)$) -- (barycentre);
	\end{tikzpicture}
	\caption{An illustration of the refinement strategy used for polygonal elements.
	Edges and vertices of the original element are shown by solid lines and points.
	To refine the element, the midpoint of each \emph{planar face} of its boundary is connected to its barycentre.
	Note that the two edges $PQ$ and $QR$ are not bisected as the refinement treats $PR$ as a single planar face, adding only a new edge from $Q$ to the barycentre.
	Consequently, the result of refining two neighbours in the mesh is independent of the order in which they are refined.}
	\label{fig:num:polygonRefinement}
\end{figure}
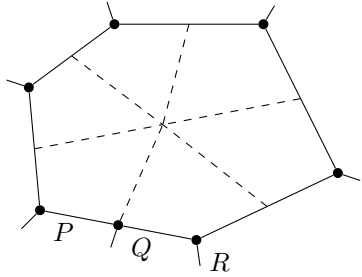
By refining in this fashion, hanging nodes may be introduced. Nevertheless, this is trivially accounted for in the VEM setting as the method is able to handle polygonal elements with an arbitrary number of faces.
This is a flexibility which we take advantage of in these examples by imposing no restriction on the number of hanging nodes allowed on each face.
In this extreme mesh flexibility, more exotic refinement strategies are certainly possible, but we leave the development of these for future work.

\begin{remark}[On the mesh assumptions]
	By imposing no restriction on the number of hanging nodes per face, we are at risk of violating Assumption~\ref{ass:meshReg} by producing meshes which contain very small faces.
	However, this requirement does not seem to be necessary for the virtual element method to remain accurate and stable in practice.
	This is demonstrated in Section~\ref{sec:num:kellogg}, where the effect of limiting the number of hanging nodes allowed per edge is also studied, and the results in either case are found to be very similar. %See the recent work~\cite{VEM-stability} for a study of the impact of small faces on the stability properties of the VEM in the a priori setting.
\end{remark}

We consider the general convection-reaction-diffusion problem
\begin{equation*}
	-\nabla \cdot (\diff \nabla \u) + \conv \cdot \nabla \u + \reac \u = \force,
\end{equation*}
with coefficients
\begin{align*}
	\diff &= \begin{bmatrix} 1 & 0 \\ 0 & 1 \end{bmatrix}, \quad
	\conv = \begin{bmatrix}\cos(x) \exp(y) \\ \exp(x) \sin(y) \end{bmatrix}, \quad
	\reac = \sin(2\pi x) \sin(2\pi y),
\end{align*}
and forcing function $\force$ chosen in accordance with two different benchmark solutions:
\begin{enumerate}[Problem 1:]
\item posed over an L-shaped domain contained within $[-1,1]\times[-1,1]$ (depicted in Fig.~\ref{fig:num:LDomainGaussian:initialMesh}) and exhibiting low regularity at the reentrant corner located at the origin, along with a sharp Gaussian at the point $(0.5,0.5)$ which initially is not resolved by the mesh.
  This problem has the solution
  \begin{equation}
    \u(x, y) = r^{2/3} \sin(2\theta/3) + \exp(-(1000(x-0.5)^2+1000(y-0.5)^2)),
    \label{eq:num:LDomainGaussian}
  \end{equation}
  where $(r,\theta)$ are the usual polar coordinates centred around the point $(x,y) = (0,0)$, depicted in Fig.~\ref{fig:num:LDomainGaussian:solution}; \label{enum:num:LDomainGaussianProblem}
\item posed over $\D = (0,1)^2$ with a sharp layer in the interior of the domain and solution
  \begin{equation}
    u(x, y) = 16x(1-x)y(1-y)\arctan(25x-100y+50),
    \label{eq:num:InternalLayer}
  \end{equation}
  depicted in Fig.~\ref{fig:num:InternalLayer:solution}.
  \label{enum:num:InternalLayerProblem}
\end{enumerate}

The behaviour of the error and estimator under adaptive refinement for Problem~\ref{enum:num:LDomainGaussianProblem} and a representative set of the meshes obtained are shown in Fig.~\ref{fig:num:LDomainGaussian:errorPlots} and~\ref{fig:num:LDomainGaussianMeshes}, respectively. The same results are shown for Problem~\ref{enum:num:InternalLayerProblem} in Fig.~\ref{fig:num:InternalLayer:errorPlots} and~\ref{fig:num:InternalLayer:meshes}.
We first observe that, once the asymptotic regime is reached in each case, the error measured in the $H^1(\D)$-seminorm (shown in Fig.~\ref{fig:num:LDomainGaussian:HOneError} for Problem~\ref{enum:num:LDomainGaussianProblem} and in Fig.~\ref{fig:num:InternalLayer:HOneError} for Problem~\ref{enum:num:InternalLayerProblem} converges with the theoretical optimal rate of $N^{-\k/2}$, despite the low regularity of the true solution around the reentrant corner for Problem~\ref{enum:num:LDomainGaussianProblem}.

The initial rapid drop-off in error for Problem~\ref{enum:num:LDomainGaussianProblem} is explained by examining the magnitudes of the various components of the estimator for $\k=1$, given in Fig.~\ref{fig:num:LDomainGaussian:estimatorComponents}.
In particular, it is clear that the data oscillation term initially dominates the estimator and, comparing with the mesh after 28 iterations, shown in Fig.~\ref{fig:num:LDomainGaussian:mesh25}, it appears to be driving the refinement around the Gaussian centred at $(0.5, 0.5)$.
Once this is sufficiently resolved, the element and face residual terms begin to dominate, resulting in the expected refinement around the singularity at the reentrant corner. This is shown in Fig.~\ref{fig:num:LDomainGaussian:mesh39}, after 40 iterations.

The key difficulty of Problem~\ref{enum:num:InternalLayerProblem} is the presence of an interior sharp layer which is completely unresolved by the initial mesh. To test the resilience of the estimator in this challenging context, the initial mesh is chosen to consist of warped hexagons which are not aligned with the interior layer; see Fig.~\ref{fig:num:InternalLayer:meshOriginal} for an illustration.
As with Problem~\ref{enum:num:LDomainGaussianProblem}, the data oscillation terms initially dominate the estimator until the the mesh starts to resolve the layer.
After this point, the element and edge residuals become the dominant terms of the estimator.

For both problems, the effectivity plots in Fig.~\ref{fig:num:LDomainGaussian:efficiency} and~\ref{fig:num:InternalLayer:efficiency}, calculated as in~\eqref{eq:num:effectivity}, indicate a good level of agreement between the estimated and calculated error.

\subsection{Jumping diffusion coefficient}
\label{sec:num:kellogg}

We now consider the Kellogg problem~\cite{Kellogg:1974},
in which the diffusion coefficient $\diff$ is piecewise constant across the domain $\D = (0,1)^2$, such that
\begin{equation*}
	\diff(x,y) = \begin{cases} b \qquad \text{ for } (x - a)(y - a) \geq 0, \\ 1 \qquad \text{ otherwise,} \end{cases}
\end{equation*}
for fixed $0 < a < 1$ and $b > 0$, and no reaction or convection terms. This problem has weak solution
	$u(r, \theta) = r^{\alpha} g(\theta)$,
where $(r,\theta)$ denote the polar coordinates centred at the point $(a,a)$, and
\begin{align*}
	g(\theta) := \begin{cases}
					\cos((\frac{\pi}{2} - \sigma) \alpha) \cos((\theta - \frac{\pi}{4})\alpha) \qquad &\text{ for } 0 \leq \theta < \frac{\pi}{2}, \\
					\cos(\frac{\pi}{4}\alpha) \cos((\theta - \pi + \sigma) \alpha) \qquad &\text{ for } \frac{\pi}{2} \leq \theta < \pi, \\
					\cos(\sigma\alpha) \cos((\theta - \frac{5\pi}{4})\alpha) \qquad &\text{ for } \pi \leq \theta < \frac{3\pi}{2}, \\
					\cos(\frac{\pi}{4}\alpha) \cos((\theta - \frac{3\pi}{2} - \sigma)\alpha) \qquad &\text{ for } \frac{3\pi}{2} \leq \theta < 2\pi.
				\end{cases}
\end{align*}
The parameters $\sigma, \alpha$ and $b$ are required to satisfy a certain set of nonlinear relations~\cite{Kellogg:1974}, and following~\cite{Bonito-Devore-Nochetto:2013} we take the approximate values
$\sigma = −5.49778714378214$, $\alpha = 0.25$, $b = 25.27414236908818$.

The Kellogg problem is a common example used to test a posteriori estimators on a problem with pathological coefficients and a known weak solution.
Typically, this problem is studied in the case when $\kappa$ is piecewise constant with respect to the initial mesh, see, e.g.~\cite{Morin-Nochetto-Siebert:2002,Mekchay-Nochetto:2005,Chen-Shibin:2002,Bernardi-Verfurth:2000}. Recently, the case in which the diffusion jumps are not aligned with the initial mesh has been studied in~\cite{Bonito-Devore-Nochetto:2013} in the context of adaptive FEM.
To really test the applicability of our estimator, we consider both cases here on a variety of different meshes.

Whether the mesh is aligned with the problem or not is dictated by the parameter $a$.
We first consider $a=\frac{2}{5}$ on a square mesh, so the discontinuities of $\diff$ are matched by the initial mesh.
The behaviour of the error and estimator for this problem are shown in Fig.~\ref{fig:num:AlignedLimitedUnlimited:errorPlots}. Moreover, for this problem, we also compare the effect of limiting the mesh to have just one hanging node per edge, or allowing an unlimited number of hanging nodes to be produced.
In both cases, we use the D\"orfler strategy from~\eqref{eq:dorflermarking} with $\theta = 0.6$ to select the subset of elements to be refined.
For either a limited or unlimited number of hanging nodes per edge, the error under adaptive refinement eventually decays at the theoretical optimal rate of $N^{-1/2}$, where $N$ is the number of degrees of freedom.
It may also be seen that the $H^1$-seminorm error is slightly lower for the case of a limited number of hanging nodes, although the estimated error is approximately the same for both cases.
Consequently, the effectivity of the estimator is slightly better for the method with no limit on the number of hanging nodes.

Next, we consider $a = \frac{2 \sqrt{2}}{5}$.
In this case, it is not possible for the discontinuities of $\diff$ to align with any mesh in the sequence.
In the spirit of keeping the mesh fully unfitted from the discontinuities in $\diff$, we also test the method on a Voronoi mesh and a randomised quadrilateral mesh alongside a more standard square mesh.
For brevity, we only report here the results when an unlimited number of hanging nodes were allowed in the mesh, as limiting the number of hanging nodes leads to almost identical results in terms of convergence.
There are, however, differences in the final meshes obtained in each case: illustrations of the initial meshes and the final meshes for both limited and unlimited hanging nodes are given in Fig.~\ref{fig:num:Unaligned:mesh}. Fig.~\ref{fig:num:UnalignedUnlimited:errorPlots} shows the behaviour of the error and estimator under adaptive refinement on the three sequences of meshes: the error in the $H^1(\D)$-seminorm (Fig.~\ref{fig:num:UnalignedUnlimited:HOneError}) appears to reach the theoretical optimal convergence rate of $N^{-1/2}$ on the square and randomised quadrilateral meshes, and maintains a near-optimal rate of approximately $N^{-0.35}$ on the Voronoi mesh. These rates are also reflected by the convergence of the estimator, shown in Fig.~\ref{fig:num:UnalignedUnlimited:estimator}, resulting in good effectivities (Fig.~\ref{fig:num:UnalignedUnlimited:efficiency}) which remain roughly constant on the Voronoi and randomised quadrilateral meshes.

We note the sudden jump in the magnitude of the estimated error after $7$ iterations of adaptive refinement starting from the square mesh in Fig.~\ref{fig:num:Unaligned:mesh:initialSquares}.
Comparing with Figure~\ref{fig:num:UnalignedUnlimited:estimatorComponents}, which shows the relative magnitudes of the various terms comprising the estimator on the square mesh, it is apparent that this jump is caused by a jump in the value of the data oscillation term $\dataest$.
Noting that for $\k=1$ the coefficient approximation $\diffh$ is piecewise constant, we conclude that it is in fact only the edge data oscillation term which is non-zero and thus responsible for this effect.
Further investigation indicates that this jump occurs in situations such as that illustrated in Fig.~\ref{fig:num:kellogg:refiningSquare}, and is due to the fact that although the mesh cannot exactly align with the discontinuities of $\diff$, it is possible for it to get arbitrarily close.
\begin{figure}
	\centering
	\subcaptionbox{The lines along which $\diff$ is discontinuous pass close to the centre of the element. \label{fig:num:kellogg:refiningSquare:before}}[0.45\textwidth]{%
	\begin{tikzpicture}
		\node[shape=coordinate] (origin) at (0,0) {};
		% Inner square vertices
		\node[shape=coordinate] (isv1) at (-1,-1) {};
		\node[shape=coordinate] (isv2) at (1,-1) {};
		\node[shape=coordinate] (isv3) at (1,1) {};
		\node[shape=coordinate] (isv4) at (-1,1) {};
		% Midpoints of edges of inner square
		\node[shape=coordinate] (ism12) at (0,-1) {};
		\node[shape=coordinate] (ism23) at (1,0) {};
		\node[shape=coordinate] (ism34) at (0,1) {};
		\node[shape=coordinate] (ism41) at (1,0) {};
		% Lines for outer squares
		\coordinate (outerEdgeOffsetX) at (0.5,0);
		\coordinate (outerEdgeOffsetY) at (0,0.5);
		\node[shape=coordinate] (osv1) at ($(isv1) - (outerEdgeOffsetY)$) {};
		\node[shape=coordinate] (osv2) at ($(ism12) - (outerEdgeOffsetY)$) {};
		\node[shape=coordinate] (osv3) at ($(isv2) - (outerEdgeOffsetY)$) {};
		\node[shape=coordinate] (osv4) at ($(isv2) + (outerEdgeOffsetX)$) {};
		\node[shape=coordinate] (osv5) at ($(ism23) + (outerEdgeOffsetX)$) {};
		\node[shape=coordinate] (osv6) at ($(isv3) + (outerEdgeOffsetX)$) {};
		\node[shape=coordinate] (osv7) at ($(isv3) + (outerEdgeOffsetY)$) {};
		\node[shape=coordinate] (osv8) at ($(ism34) + (outerEdgeOffsetY)$) {};
		\node[shape=coordinate] (osv9) at ($(isv4) + (outerEdgeOffsetY)$) {};
		\node[shape=coordinate] (osv10) at ($(isv4) - (outerEdgeOffsetX)$) {};
		\node[shape=coordinate] (osv11) at ($(ism41) - (outerEdgeOffsetX)$) {};
		\node[shape=coordinate] (osv12) at ($(isv1) - (outerEdgeOffsetX)$) {};
		% Points for lines showing where kappa is discontinuous
		\coordinate (kappaoffsetx) at (0.1,0);
		\coordinate (kappaoffsety) at (0,0.1);
		\node[shape=coordinate] (kd1) at ($(0,-1.5) + (kappaoffsetx)$) {};
		\node[shape=coordinate] (kd2) at ($(0,1.5) + (kappaoffsetx)$) {};
		\node[shape=coordinate] (kd3) at ($(-1.5,0) + (kappaoffsety)$) {};
		\node[shape=coordinate] (kd4) at ($(1.5,0) + (kappaoffsety)$) {};
		\draw (isv1) -- (isv2) -- (isv3) -- (isv4) -- (isv1);
		\draw (osv1) -- (isv1) -- (osv12);
		\draw (osv3) -- (isv2) -- (osv4);
		\draw (osv6) -- (isv3) -- (osv7);
		\draw (osv9) -- (isv4) -- (osv10);
		\draw[dotted] (kd1) -- (kd2);
		\draw[dotted] (kd3) -- (kd4);
	\end{tikzpicture}}\hfill%
	\subcaptionbox{After refinement, the discontinuities of $\diff$ are suddenly very close to mesh edges. \label{fig:num:kellogg:refiningSquare:after}}[0.45\textwidth]{%
	\begin{tikzpicture}
		\node[shape=coordinate] (origin) at (0,0) {};
		% Inner square vertices
		\node[shape=coordinate] (isv1) at (-1,-1) {};
		\node[shape=coordinate] (isv2) at (1,-1) {};
		\node[shape=coordinate] (isv3) at (1,1) {};
		\node[shape=coordinate] (isv4) at (-1,1) {};
		% Midpoints of edges of inner square
		\node[shape=coordinate] (ism12) at (0,-1) {};
		\node[shape=coordinate] (ism23) at (1,0) {};
		\node[shape=coordinate] (ism34) at (0,1) {};
		\node[shape=coordinate] (ism41) at (-1,0) {};
		% Lines for outer squares
		\coordinate (outerEdgeOffsetX) at (0.5,0);
		\coordinate (outerEdgeOffsetY) at (0,0.5);
		\node[shape=coordinate] (osv1) at ($(isv1) - (outerEdgeOffsetY)$) {};
		\node[shape=coordinate] (osv2) at ($(ism12) - (outerEdgeOffsetY)$) {};
		\node[shape=coordinate] (osv3) at ($(isv2) - (outerEdgeOffsetY)$) {};
		\node[shape=coordinate] (osv4) at ($(isv2) + (outerEdgeOffsetX)$) {};
		\node[shape=coordinate] (osv5) at ($(ism23) + (outerEdgeOffsetX)$) {};
		\node[shape=coordinate] (osv6) at ($(isv3) + (outerEdgeOffsetX)$) {};
		\node[shape=coordinate] (osv7) at ($(isv3) + (outerEdgeOffsetY)$) {};
		\node[shape=coordinate] (osv8) at ($(ism34) + (outerEdgeOffsetY)$) {};
		\node[shape=coordinate] (osv9) at ($(isv4) + (outerEdgeOffsetY)$) {};
		\node[shape=coordinate] (osv10) at ($(isv4) - (outerEdgeOffsetX)$) {};
		\node[shape=coordinate] (osv11) at ($(ism41) - (outerEdgeOffsetX)$) {};
		\node[shape=coordinate] (osv12) at ($(isv1) - (outerEdgeOffsetX)$) {};
		% Points for lines showing where kappa is discontinuous
		\coordinate (kappaoffsetx) at (0.05,0);
		\coordinate (kappaoffsety) at (0,0.05);
		\node[shape=coordinate] (kd1) at ($(0,-1.5) + (kappaoffsetx)$) {};
		\node[shape=coordinate] (kd2) at ($(0,1.5) + (kappaoffsetx)$) {};
		\node[shape=coordinate] (kd3) at ($(-1.5,0) + (kappaoffsety)$) {};
		\node[shape=coordinate] (kd4) at ($(1.5,0) + (kappaoffsety)$) {};
		\draw (isv1) -- (isv2) -- (isv3) -- (isv4) -- (isv1);
		\draw (osv1) -- (isv1) -- (osv12);
		\draw (osv3) -- (isv2) -- (osv4);
		\draw (osv6) -- (isv3) -- (osv7);
		\draw (osv9) -- (isv4) -- (osv10);
		\draw (ism12) -- (ism34);
		\draw (ism23) -- (ism41);
		\draw (osv2) -- (ism12);
		\draw (osv5) -- (ism23);
		\draw (osv8) -- (ism34);
		\draw (osv11) -- (ism41);
		\draw[dotted] (kd1) -- (kd2);
		\draw[dotted] (kd3) -- (kd4);
	\end{tikzpicture}}
	\caption{The element of the mesh containing the intersection of the lines along which $\diff$ is discontinuous, before and after refinement. Solid lines indicate edges in the mesh, dotted lines indicate the lines along which $\diff$ jumps.}
	\label{fig:num:kellogg:refiningSquare}
\end{figure}
This is a highly desirable trait from the point of view of generating a well-adapted mesh. Nonetheless, the standard (isotropic) refinement strategy used on squares produces a mesh with edges close to the diffusion discontinuity, such as the ones depicted in Fig.~\ref{fig:num:kellogg:refiningSquare:after}, only if the previous iteration contains elements as in Fig.~\ref{fig:num:kellogg:refiningSquare:before} with the lines of discontinuity of $\diff$ passing close to its centre.
This is problematic because the roughly equal distribution of the central element in Figure~\ref{fig:num:kellogg:refiningSquare:before} and its four neighbours among the different zones of $\diff$ mean that the approximation $\diffh$ will be very similar on each of the five elements, and thus the edge term of the data oscillation indicator will be very small.
However, once this parent is refined, each child is almost entirely in a single zone of $\diff$, so the approximations $\diffh$ will be very different on each of the children.
This will, then, cause the reported error to dramatically increase.
Moreover, since the discontinuities of $\diff$ lie along lines with irrational coordinates, it is clear that this situation could occur an arbitrary number of times in the refinement sequence, causing problems with the effectivity of the estimator.
Clearly the real culprit here is the symmetry of the situation and, consequently, a way to prevent such problems occurring is to use unstructured meshes.
This claim can be substantiated by the fact that the same difficulty does not occur with the randomised quadrilateral or Voronoi meshes.

\begin{figure}[p]
\subcaptionbox{The error $\|\nabla(u - \Pi^0_k u_h ) \|_{0,\Omega}$.\label{fig:num:LDomainGaussian:HOneError}}[.3\textwidth]{%
\begin{tikzpicture}[scale=0.5]
  \begin{loglogaxis}[xlabel={Number of degrees of freedom},grid=major,legend entries={$\k = 1$, $\k = 2$, $\k = 3$, dofs$^{-0.5}$, dofs$^{-1.0}$, dofs$^{-1.5}$, },]
\addplot[mark=triangle] table[x index = 1, y index = 2] from \LDomainGaussianHOneErrorPlotData;
\addplot[mark=star] table[x index = 5, y index = 6] from \LDomainGaussianHOneErrorPlotData;
\addplot[mark=o] table[x index = 9, y index = 10] from \LDomainGaussianHOneErrorPlotData;
\addplot[dashed] coordinates {(21, 1.338260275343705) (875.0, 0.2073223903718849)};
\addplot[dashed] coordinates {(65, 2.3493773532399134) (1837.0, 0.08312984646738944)};
\addplot[dashed] coordinates {(121, 6.67500879129396) (2008.0, 0.09873800198297408)};
\end{loglogaxis}
\end{tikzpicture}}%
\hfill%
\subcaptionbox{Estimated error.}[.3\textwidth]{%
\begin{tikzpicture}[scale=0.5]
	\begin{loglogaxis}[xlabel={Number of degrees of freedom},grid=major,legend entries={$\k = 1$, $\k = 2$, $\k = 3$, dofs$^{-0.5}$, dofs$^{-1.0}$, dofs$^{-1.5}$}]
		\addplot[mark=triangle] table[x index = 1, y index = 2] from \LDomainGaussianEstimatorConvergencePlotData;
		\addplot[mark=star] table[x index = 5, y index = 6] from \LDomainGaussianEstimatorConvergencePlotData;
		\addplot[mark=o] table[x index = 9, y index = 10] from \LDomainGaussianEstimatorConvergencePlotData;
		\addplot[dashed] coordinates {(21, 6.218465102783021) (875.0, 0.9633604712820257)};
		\addplot[dashed] coordinates {(65, 7.912198067205785) (1837.0, 0.2799634591009124)};
		\addplot[dashed] coordinates {(121, 13.292411507802909) (2008.0, 0.19662388393072483)};
\end{loglogaxis}
\end{tikzpicture}}%
\hfill%
\subcaptionbox{Effectivity of the estimator.\label{fig:num:LDomainGaussian:efficiency}}[.3\textwidth]{%
\begin{tikzpicture}[scale=0.5]%,ylabel={Effectivity}
	\begin{semilogxaxis}[xlabel={Number of degrees of freedom},grid=major,ymin=-10,ymax=70,legend entries={$\k = 1$, $\k = 2$, $\k = 3$, },]
		\addplot[mark=triangle] table[x index = 1, y index = 2] from \LDomainGaussianEstimatorEfficiencyPlotData;
		\addplot[mark=star] table[x index = 5, y index = 6] from \LDomainGaussianEstimatorEfficiencyPlotData;
		\addplot[mark=o] table[x index = 9, y index = 10] from \LDomainGaussianEstimatorEfficiencyPlotData;
\end{semilogxaxis}
\end{tikzpicture}}

\subcaptionbox{The estimator components for $\k=1$.\label{fig:num:LDomainGaussian:estimatorComponents}}[.5\textwidth]{%
\begin{tikzpicture}[scale=0.6]
	\begin{loglogaxis}[xlabel={Number of degrees of freedom},grid=major,legend entries={$\left(\sum \estimator^{\E}\right)^{1/2}$, $\left(\sum \virtualosc^E\right)^{1/2}$, $\left(\sum \dataest^\E\right)^{1/2}$, $\left(\sum \stabest^\E\right)^{1/2}$},]
		\addplot[mark=triangle] table[x index = 1, y index = 2] from \LDomainGaussianEstimatorComponentsPlotData;
		\addplot[mark=star] table[x index = 5, y index = 6] from \LDomainGaussianEstimatorComponentsPlotData;
		\addplot[mark=o] table[x index = 9, y index = 10] from \LDomainGaussianEstimatorComponentsPlotData;
		\addplot[mark=10-pointed star] table[x index = 13, y index = 14] from \LDomainGaussianEstimatorComponentsPlotData;
\end{loglogaxis}
\end{tikzpicture}}%
\hfill%
\subcaptionbox{Adaptive approximation.\label{fig:num:LDomainGaussian:solution}}[.5\textwidth]{%
\includegraphics[width=.5\textwidth]{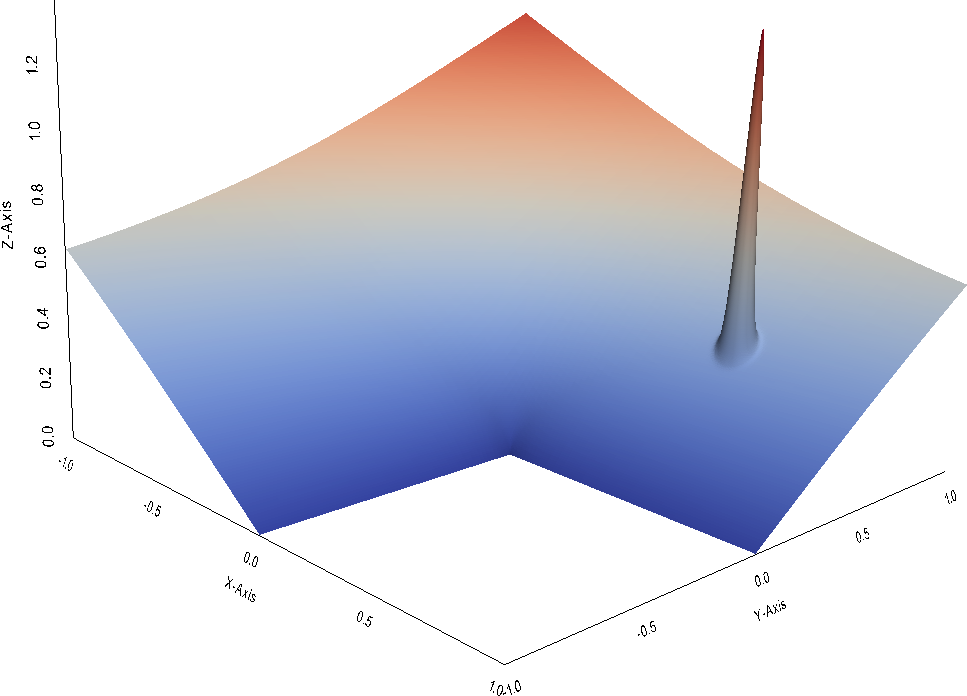}
}
\caption{The behaviour of the error and estimator when applied to Problem~\ref{enum:num:LDomainGaussianProblem} with solution~\eqref{eq:num:LDomainGaussian} under adaptive refinement, each plotted against the number of degrees of freedom.}
\label{fig:num:LDomainGaussian:errorPlots}
\end{figure}

\begin{figure}[p]
\subcaptionbox{The initial mesh.\label{fig:num:LDomainGaussian:initialMesh}}[.3\textwidth]{%
	\includegraphics[width=.3\textwidth]{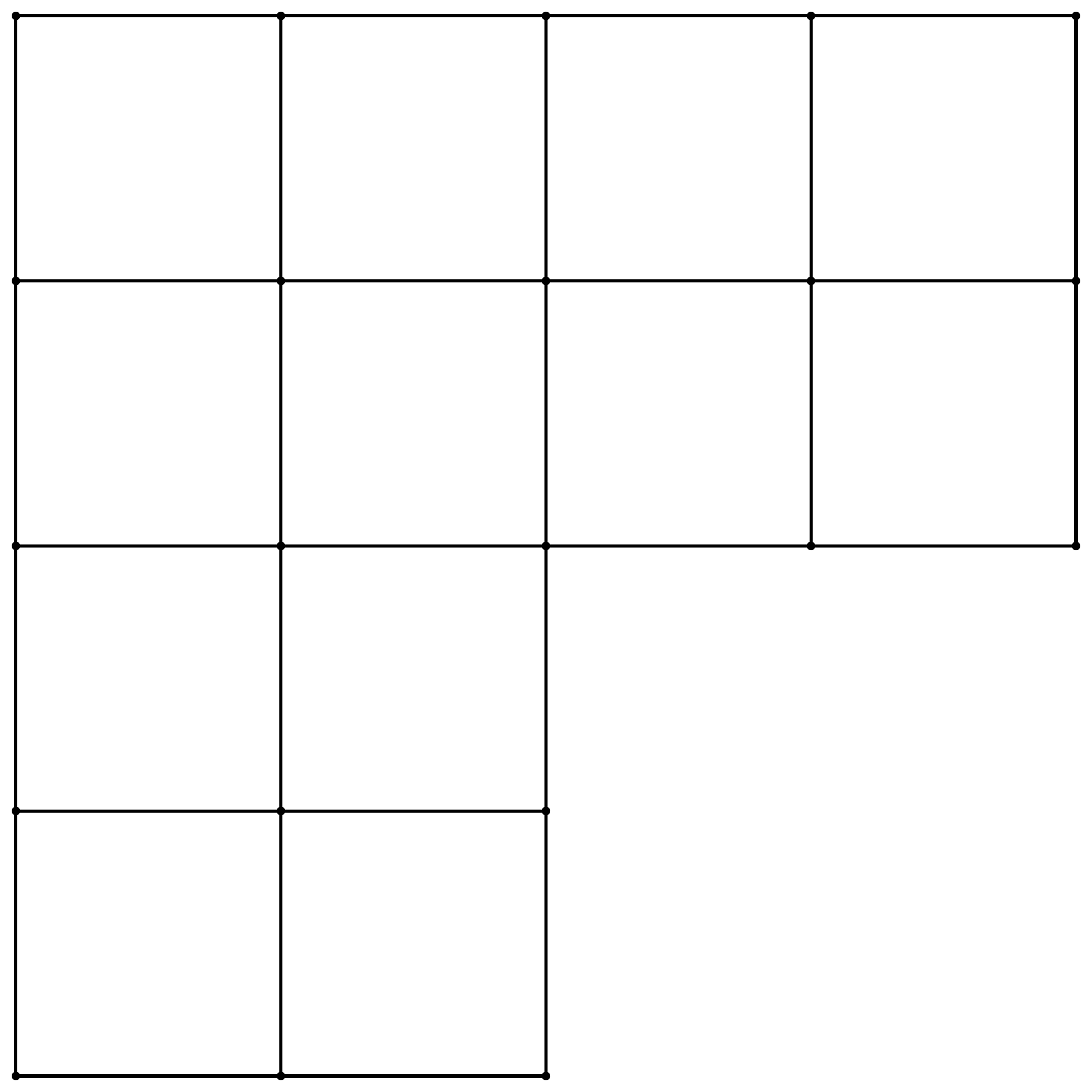}%
}\hfill%
\subcaptionbox{After 28 refinement steps.\label{fig:num:LDomainGaussian:mesh25}}[.3\textwidth]{%
\includegraphics[width=.3\textwidth]{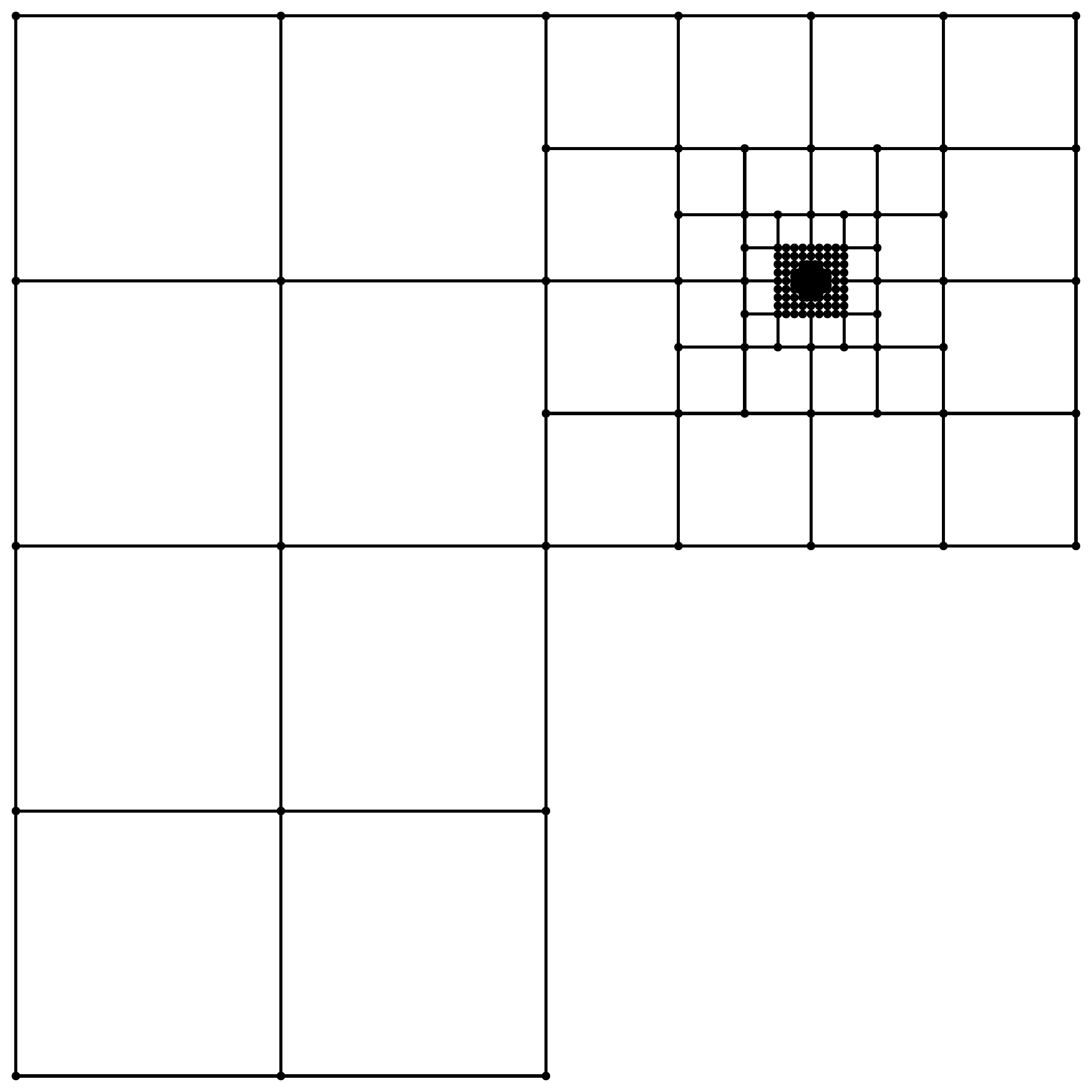}%
}%
\hfill%
\subcaptionbox{After 40 refinement steps.\label{fig:num:LDomainGaussian:mesh39}}[.3\textwidth]
{%
\includegraphics[width=.3\textwidth]{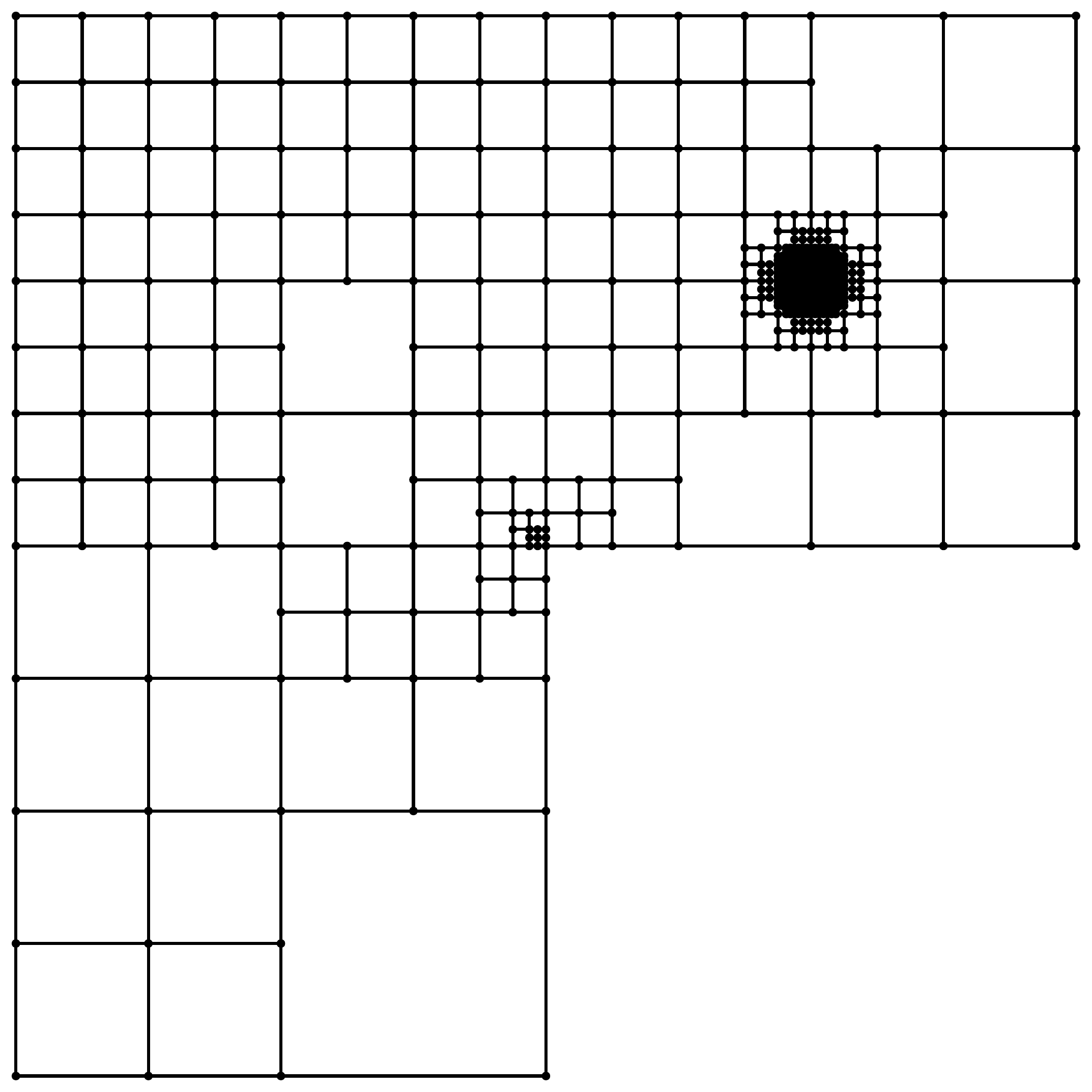}%
}
\caption{Some representative meshes from the adaptive refinement sequence for $\k = 1$ when solving Problem~\ref{enum:num:LDomainGaussianProblem} with solution~\eqref{eq:num:LDomainGaussian} together with the adaptive approximation on the final mesh.}
\label{fig:num:LDomainGaussianMeshes}
\end{figure}

\begin{figure}
\subcaptionbox{The error $\|\nabla(u - \Pi^0_k u_h ) \|_{0,\Omega}$.\label{fig:num:InternalLayer:HOneError}}[.3\textwidth]{%
\begin{tikzpicture}[scale=0.5]
	\begin{loglogaxis}[xlabel={Number of degrees of freedom},grid=major,legend entries={$\k = 1$, $\k = 2$, $\k = 3$, dofs$^{-0.5}$, dofs$^{-1.0}$, dofs$^{-1.5}$, },]
		\addplot[mark=triangle] table[x index = 1, y index = 2] from \InternalLayerHOneErrorPlotData;
		\addplot[mark=star] table[x index = 5, y index = 6] from \InternalLayerHOneErrorPlotData;
		\addplot[mark=o] table[x index = 9, y index = 10] from \InternalLayerHOneErrorPlotData;
		\addplot[dashed] coordinates {(280, 3.6894471073148543) (3479.0, 1.0466779855555364)};
		\addplot[dashed] coordinates {(801, 5.193354226522696) (4939.0, 0.8422508069335257)};
		\addplot[dashed] coordinates {(1443, 12.39881122269069) (6729.0, 1.2312712869593867)};
\end{loglogaxis}
\end{tikzpicture}}%
\hfill%
\subcaptionbox{Estimated error.}[.3\textwidth]{%
\begin{tikzpicture}[scale=0.5]
	\begin{loglogaxis}[xlabel={Number of degrees of freedom},grid=major,legend entries={$\k = 1$, $\k = 2$, $\k = 3$, dofs$^{-0.5}$, dofs$^{-1.0}$, dofs$^{-1.5}$}]
		\addplot[mark=triangle] table[x index = 1, y index = 2] from \InternalLayerEstimatorConvergencePlotData;
		\addplot[mark=star] table[x index = 5, y index = 6] from \InternalLayerEstimatorConvergencePlotData;
		\addplot[mark=o] table[x index = 9, y index = 10] from \InternalLayerEstimatorConvergencePlotData;
		\addplot[dashed] coordinates {(280, 25.320748448910212) (3479.0, 7.183371710823147)};
		\addplot[dashed] coordinates {(801, 27.244730811880945) (4939.0, 4.418511719035566)};
		\addplot[dashed] coordinates {(1443, 34.79074871460481) (6729.0, 3.4549158927202366)};
\end{loglogaxis}
\end{tikzpicture}}%
\hfill%
\subcaptionbox{Effectivity of the estimator.\label{fig:num:InternalLayer:efficiency}}[.3\textwidth]{%
\begin{tikzpicture}[scale=0.5]
	\begin{semilogxaxis}[xlabel={Number of degrees of freedom},grid=major,ymin=1,ymax=25,legend entries={$\k = 1$, $\k = 2$, $\k = 3$, },]
		\addplot[mark=triangle] table[x index = 1, y index = 2] from \InternalLayerEfficiencyPlotData;
		\addplot[mark=star] table[x index = 5, y index = 6] from \InternalLayerEfficiencyPlotData;
		\addplot[mark=o] table[x index = 9, y index = 10] from \InternalLayerEfficiencyPlotData;
\end{semilogxaxis}
\end{tikzpicture}}%

\subcaptionbox{The estimator components for $\k=1$.}[.5\textwidth]{%
\begin{tikzpicture}[scale=0.6]
	\begin{loglogaxis}[xlabel={Number of degrees of freedom}, grid=major, ymin=0.01, legend entries={$\left( \sum \estimator^{\E}\right)^{1/2}$, $\left( \sum \virtualosc^E\right)^{1/2}$, $\left( \sum \dataest^\E\right)^{1/2}$, $\left( \sum \stabest^\E\right)^{1/2}$},]
		\addplot[mark=triangle] table[x index = 1, y index = 2] from \InternalLayerEstimatorComponentsPlotData;
		\addplot[mark=star] table[x index = 5, y index = 6] from \InternalLayerEstimatorComponentsPlotData;
		\addplot[mark=o] table[x index = 9, y index = 10] from \InternalLayerEstimatorComponentsPlotData;
		\addplot[mark=10-pointed star] table[x index = 13, y index = 14] from \InternalLayerEstimatorComponentsPlotData;
\end{loglogaxis}
\end{tikzpicture}}%
\hfill%
\subcaptionbox{Adaptive approximation after 40 refinement steps.\label{fig:num:InternalLayer:solution}}[.5\textwidth]{%
  \includegraphics[width=.2\textwidth]{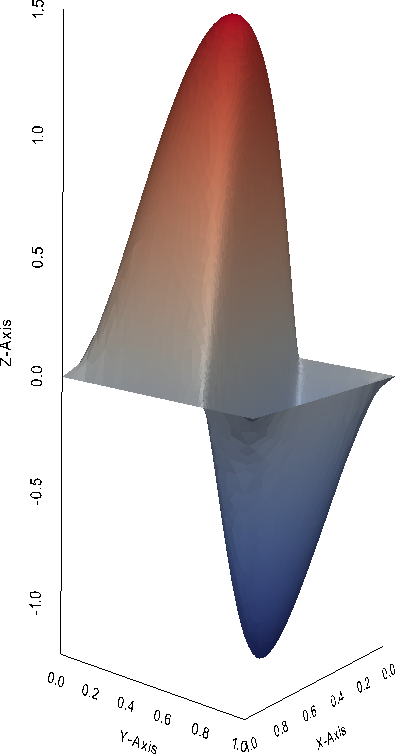}
}
\hfill

\caption{The behaviour of the error and estimator when applied to Problem~\ref{enum:num:InternalLayerProblem} with solution~\eqref{eq:num:InternalLayer} under adaptive refinement.}
\label{fig:num:InternalLayer:errorPlots}
\end{figure}

\begin{figure}
\subcaptionbox{The initial mesh.\label{fig:num:InternalLayer:meshOriginal}}[.3\textwidth]{%
\includegraphics[width=.3\textwidth]{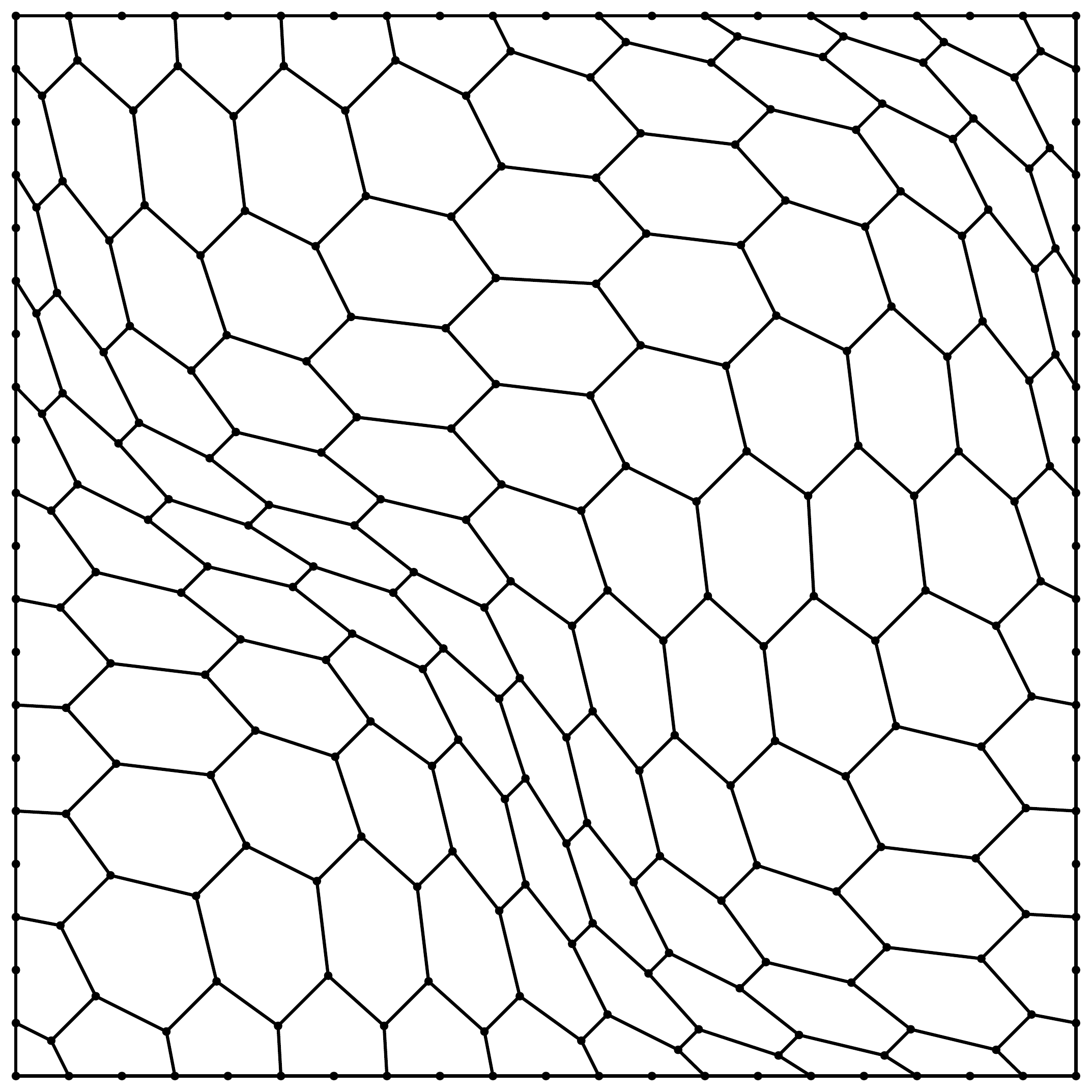}%
}\hfill%
\subcaptionbox{After 25 refinement steps.}[.3\textwidth]{%
\includegraphics[width=.3\textwidth]{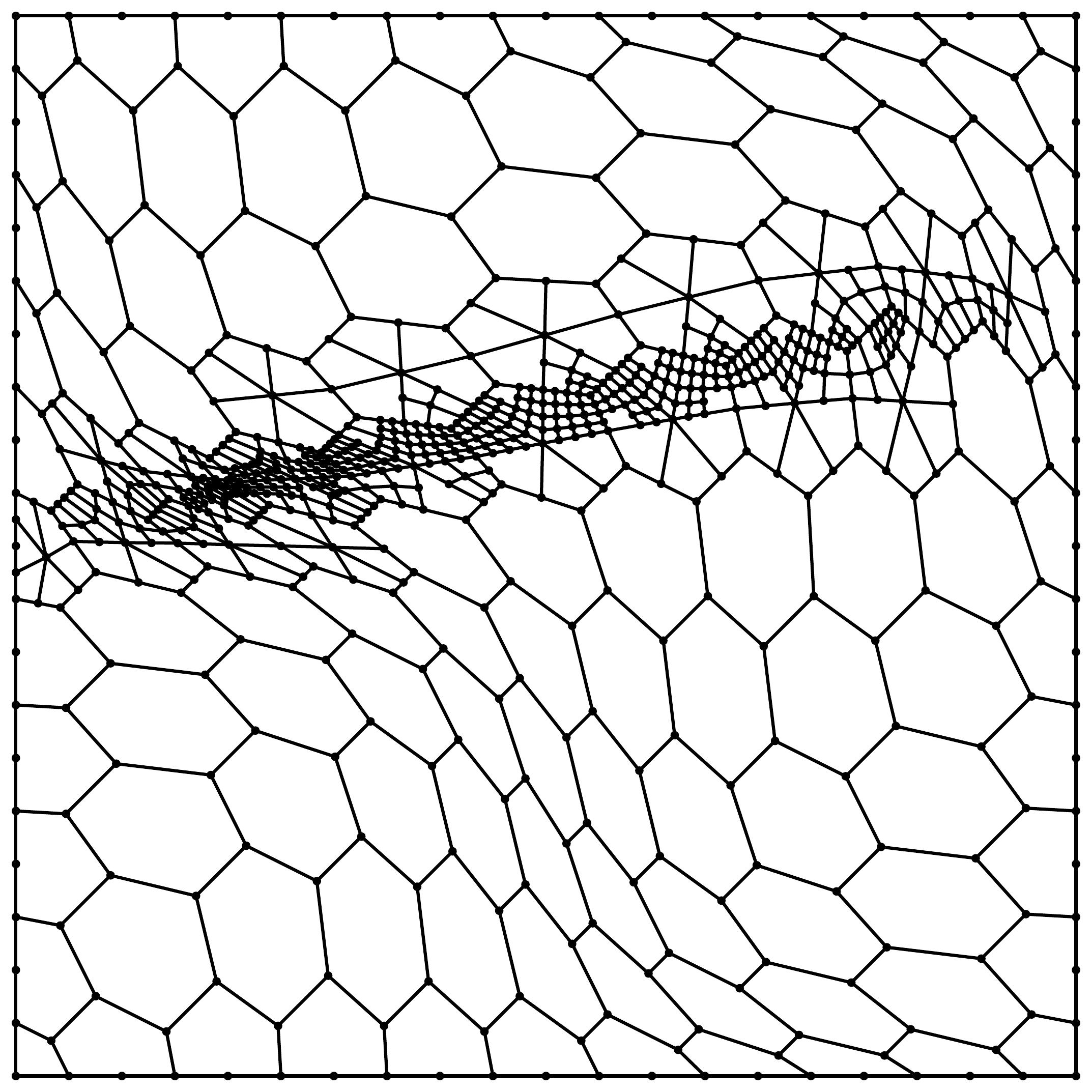}%
}\hfill%
\subcaptionbox{After 40 refinement steps.}[.3\textwidth]{%
\includegraphics[width=.3\textwidth]{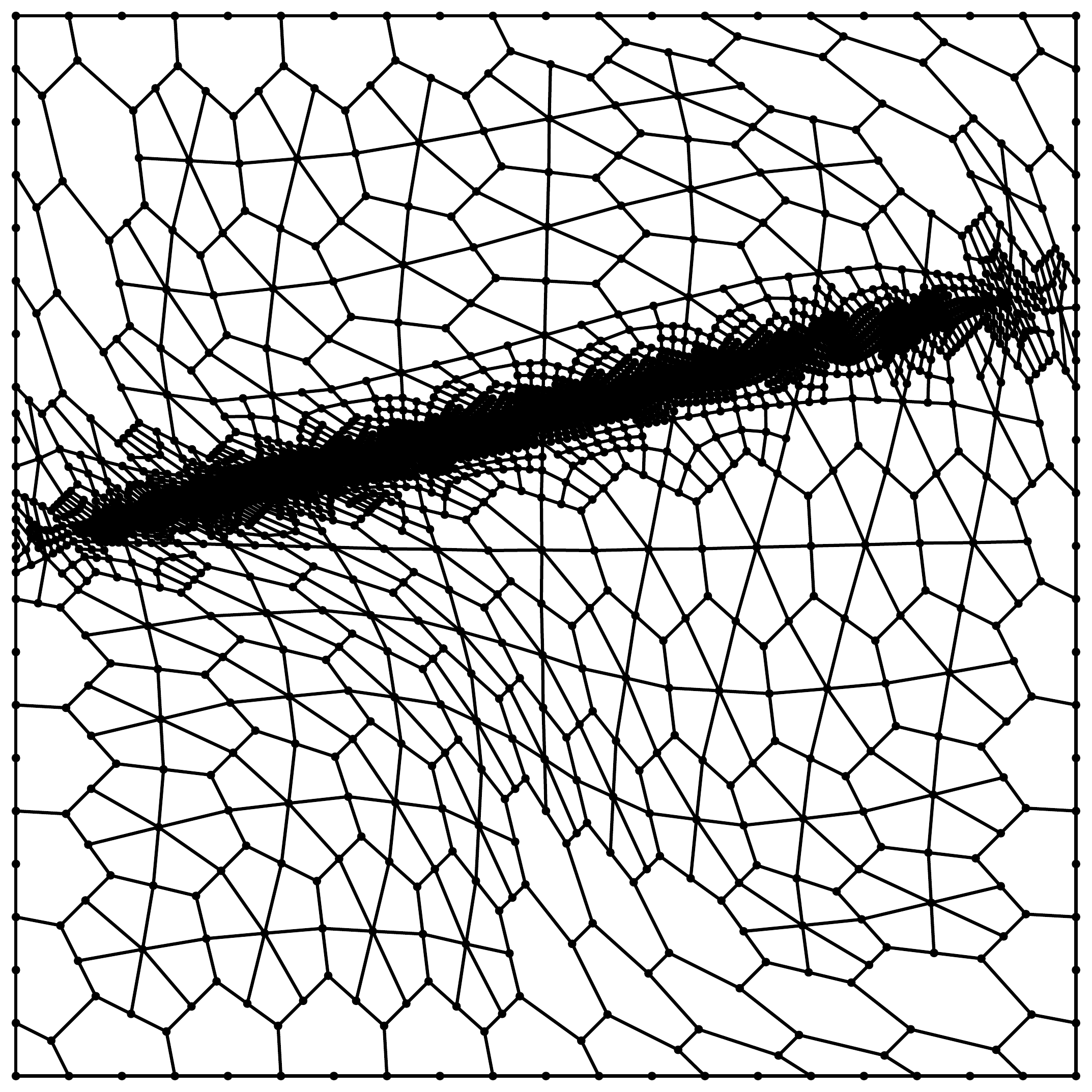}%
}
\caption{Some representative meshes from the adaptive refinement sequence for $\k = 1$ when solving Problem~\ref{enum:num:InternalLayerProblem} with solution~\eqref{eq:num:InternalLayer}.}
\label{fig:num:InternalLayer:meshes}
\end{figure}

\begin{figure}
\subcaptionbox{The error $\|\nabla(u - \Pi^0_k u_h ) \|_{0,\Omega}$.\label{fig:num:AlignedLimitedUnlimited:HOneError}}[.3\textwidth]{%
\begin{tikzpicture}[scale=0.45]
	\begin{loglogaxis}[xlabel={Number of degrees of freedom},grid=major,legend entries={limited, unlimited, dofs$^{-0.5}$ },]
		\addplot[mark=triangle] table[x index = 1, y index = 2] from \AlignedLimitedUnlimitedHOneErrorPlotData;
		\addplot[mark=star] table[x index = 5, y index = 6] from \AlignedLimitedUnlimitedHOneErrorPlotData;
		\addplot[dashed] coordinates {(36, 0.3606099826683983) (1338.0, 0.059150843031971184)};
\end{loglogaxis}
\end{tikzpicture}}%
\hfill%
\subcaptionbox{Estimated error.}[.3\textwidth]{%
\begin{tikzpicture}[scale=0.45]
	\begin{loglogaxis}[xlabel={Number of degrees of freedom},grid=major,legend entries={limited, unlimited, dofs$^{-0.5}$}]
		\addplot[mark=triangle] table[x index = 1, y index = 2] from \AlignedLimitedUnlimitedEstimatorConvergencePlotData;
		\addplot[mark=star] table[x index = 5, y index = 6] from \AlignedLimitedUnlimitedEstimatorConvergencePlotData;
		\addplot[dashed] coordinates {(36, 3.327349055826757) (1338.0, 0.545784950980598)};
\end{loglogaxis}
\end{tikzpicture}}%
\hfill%
\subcaptionbox{Effectivity of the estimator. \label{fig:num:AlignedLimitedUnlimited:efficiency}}[.3\textwidth]{%
\begin{tikzpicture}[scale=0.45]
	\begin{semilogxaxis}[xlabel={Number of degrees of freedom},grid=major,ymin=3.981071706,ymax=20,legend entries={limited, unlimited, },]
		\addplot[mark=triangle] table[x index = 1, y index = 2] from \AlignedLimitedUnlimitedEfficiencyPlotData;
		\addplot[mark=star] table[x index = 5, y index = 6] from \AlignedLimitedUnlimitedEfficiencyPlotData;
	\end{semilogxaxis}
\end{tikzpicture}}

\subcaptionbox{The non-zero components of the estimator. \label{fig:num:AlignedLimitedUnlimited:estimatorComponents}}[.5\textwidth]{%
\begin{tikzpicture}[scale=0.6]
	\begin{loglogaxis}[xlabel={Number of degrees of freedom},grid=major,ymax=20,legend entries={$\left( \sum \norm{\edgeresid}_{0,\s}^2 \right)^{1/2}$ (limited), $\left( \sum \norm{\edgeresid}_{0,\s}^2 \right)^{1/2}$ (unlimited), $\left( \sum \stabest^\E \right)^{1/2}$ (limited), $\left( \sum \stabest^\E \right)^{1/2}$ (unlimited) },]
		\addplot[mark=star] table[x index = 1, y index = 2] from \AlignedLimitedUnlimitedEstimatorComponentsPlotData;
		\addplot[mark=o] table[x index = 17, y index = 18] from \AlignedLimitedUnlimitedEstimatorComponentsPlotData;
		\addplot[mark=diamond] table[x index = 13, y index = 14] from \AlignedLimitedUnlimitedEstimatorComponentsPlotData;
		\addplot[mark=+] table[x index = 29, y index = 30] from \AlignedLimitedUnlimitedEstimatorComponentsPlotData;
\end{loglogaxis}
\end{tikzpicture}}%
\hfill%
\subcaptionbox{Adaptive approximation.}[.5\textwidth]{%
	\includegraphics[width=.5\textwidth]{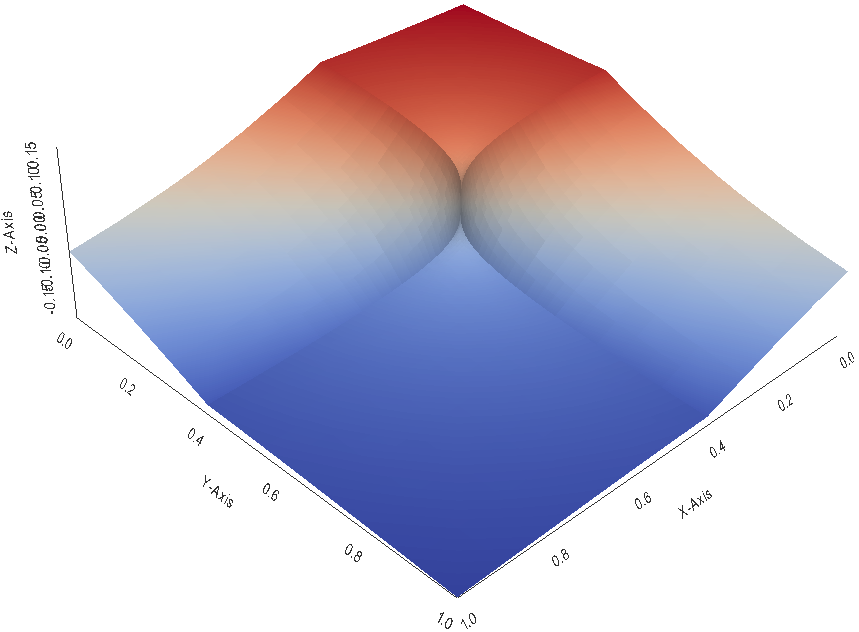}
}
\caption{The behaviour of the estimator for the Kellogg problem (\S\ref{sec:num:kellogg}) when the discontinuities of $\diff$ are matched by the initial mesh, with $\k=1$ and either one (`limited') or an unlimited number of hanging nodes per edge (`unlimited').}
\label{fig:num:AlignedLimitedUnlimited:errorPlots}
\end{figure}

\begin{figure}
\subcaptionbox{The initial mesh.\label{fig:num:AlignedLimitedUnlimited:initialMesh}}[.3\textwidth]{%
	\includegraphics[width=.3\textwidth]{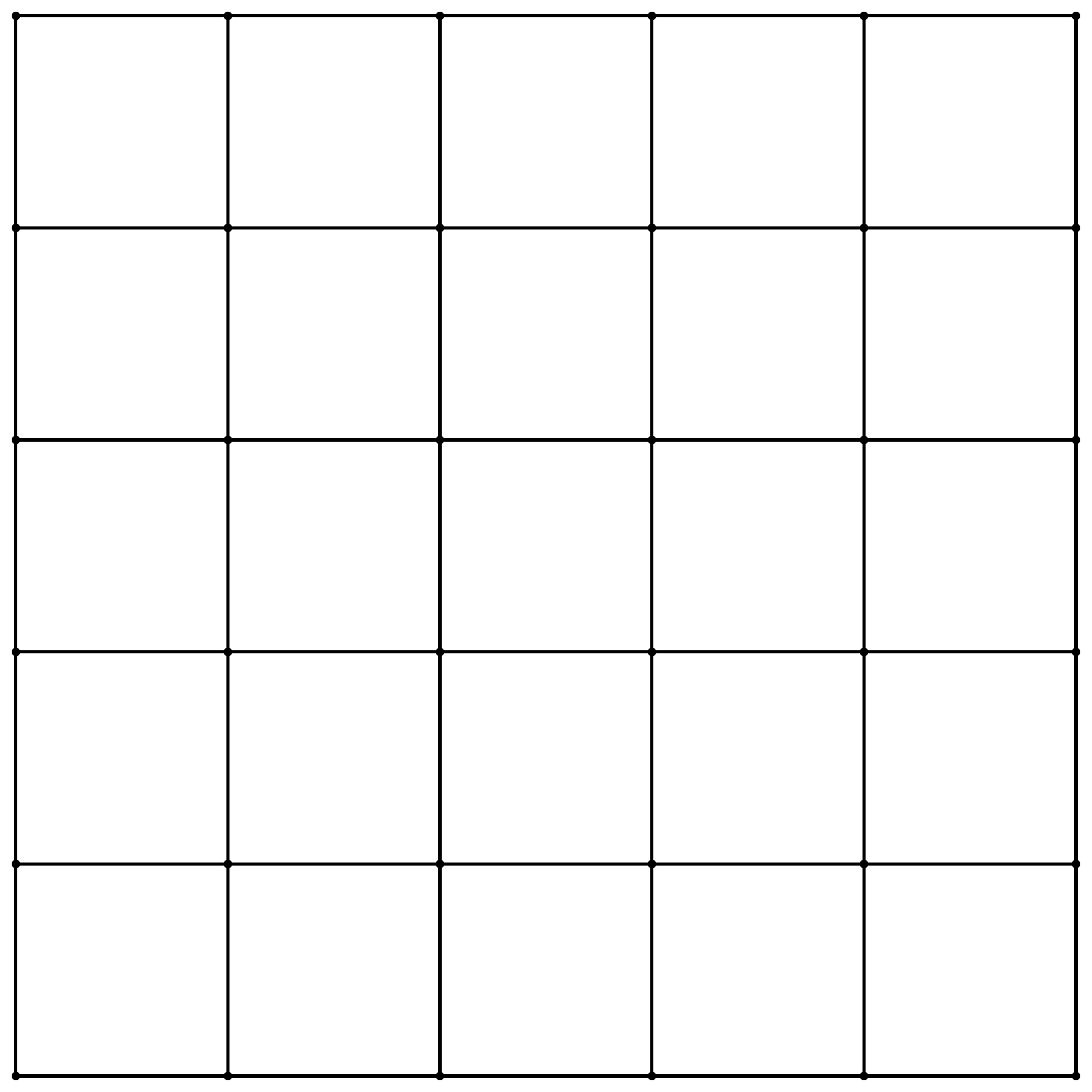}%
}\hfill%
\subcaptionbox{The final mesh with at most one hanging node per edge. \label{fig:num:AlignedLimitedUnlimited:finalLimitedMesh}}[.3\textwidth]{%
\includegraphics[width=.3\textwidth]{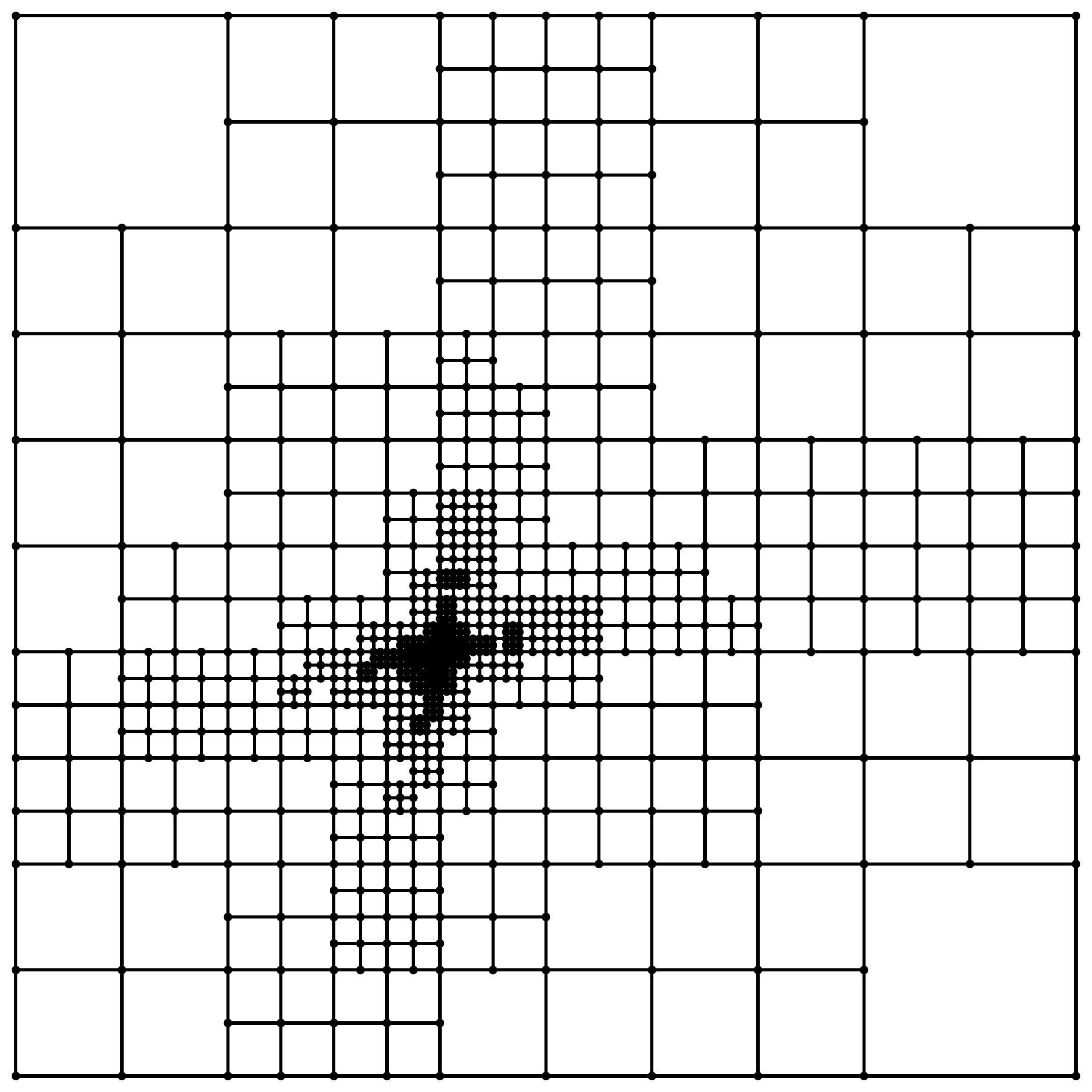}%
}\hfill%
\subcaptionbox{The final mesh with no limit on the number of hanging nodes. \label{fig:num:AlignedLimitedUnlimited:finalUnlimitedMesh}}[.3\textwidth]{%
\includegraphics[width=.3\textwidth]{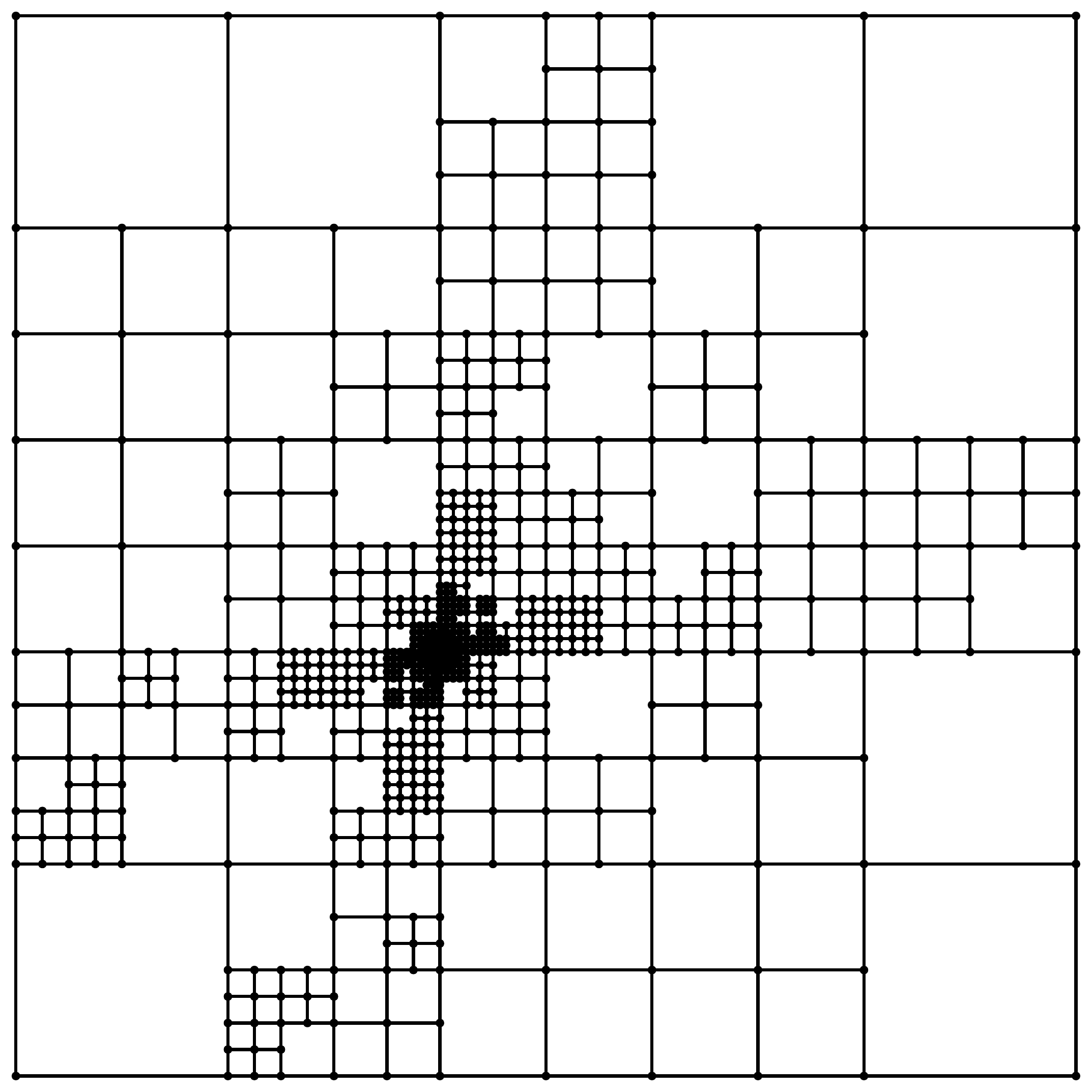}%
}
\caption{The initial and final adapted meshes when solving the Kellogg problem (see \S\ref{sec:num:kellogg}) when the discontinuity in $\diff$ is aligned with the initial mesh.}
\label{fig:num:AlignedLimitedUnlimited:meshes}
\end{figure}

\begin{figure}
\subcaptionbox{The error $\|\nabla(u - \Pi^0_k u_h ) \|_{0,\Omega}$.\label{fig:num:UnalignedUnlimited:HOneError}}[.5\textwidth]{%
\begin{tikzpicture}[scale=0.67]
	\begin{loglogaxis}[xlabel={Number of degrees of freedom},ylabel={$\|\nabla(u - \Pi^0_k u_h ) \|_{0,\Omega}$},grid=major,legend entries={squares, Voronoi, random quadrilaterals, dofs$^{-0.5}$ },]
		\addplot[mark=triangle] table[x index = 1, y index = 2] from \UnlimitedUnalignedHOneErrorPlotData;
		\addplot[mark=star] table[x index = 5, y index = 6] from \UnlimitedUnalignedHOneErrorPlotData;
		\addplot[mark=o] table[x index = 9, y index = 10] from \UnlimitedUnalignedHOneErrorPlotData;
		\addplot[dashed] coordinates {(36, 0.9218806977217697) (1142.0, 0.16367898387859667)};
\end{loglogaxis}
\end{tikzpicture}}%
\hfill%
\subcaptionbox{Convergence of the estimator. \label{fig:num:UnalignedUnlimited:estimator}}[.5\textwidth]{%
\begin{tikzpicture}[scale=0.7]
	\begin{loglogaxis}[xlabel={Number of degrees of freedom},ylabel={the estimator of $\|\nabla(u - u_h ) \|_{0,\Omega}$},grid=major,legend entries={squares, Voronoi, random quadrilaterals, dofs$^{-0.5}$ }]
		\addplot[mark=triangle] table[x index = 1, y index = 2] from \UnlimitedUnalignedEstimatorConvergencePlotData;
		\addplot[mark=star] table[x index = 5, y index = 6] from \UnlimitedUnalignedEstimatorConvergencePlotData;
		\addplot[mark=o] table[x index = 9, y index = 10] from \UnlimitedUnalignedEstimatorConvergencePlotData;
		\addplot[dashed] coordinates {(36, 8.132170558859686) (1142.0, 1.443858643630391)};
\end{loglogaxis}
\end{tikzpicture}}
\vspace{0.1cm}

\subcaptionbox{Effectivity of the estimator. \label{fig:num:UnalignedUnlimited:efficiency}}[.5\textwidth]{%
\begin{tikzpicture}[scale=0.67]
	\begin{semilogxaxis}[xlabel={Number of degrees of freedom},ylabel={Effectivity},grid=major,ymin=3.981071706,ymax=30,legend entries={squares, Voronoi, random quadrilaterals},]
		\addplot[mark=triangle] table[x index = 1, y index = 2] from \UnlimitedUnalignedEfficiencyPlotData;
		\addplot[mark=star] table[x index = 5, y index = 6] from \UnlimitedUnalignedEfficiencyPlotData;
		\addplot[mark=o] table[x index = 9, y index = 10] from \UnlimitedUnalignedEfficiencyPlotData;
	\end{semilogxaxis}
\end{tikzpicture}}%
\hfill%
\subcaptionbox{The non-zero components of the estimator on the square mesh. \label{fig:num:UnalignedUnlimited:estimatorComponents}}[.5\textwidth]{%
\begin{tikzpicture}[scale=0.67]
	\begin{loglogaxis}[xlabel={Number of degrees of freedom},ylabel={Estimator components},grid=major,ymax=20,legend entries={$\left( \sum \norm{\edgeresid}_{0,\s}^2 \right)^{1/2}$, $\left( \sum \virtualosc^\E \right)^{1/2}$ , $\left( \sum \dataest^E \right)^{1/2}$ , $\left( \sum \stabest^\E \right)^{1/2}$},]
		\addplot[mark=triangle] table[x index = 1, y index = 2] from \UnlimitedUnalignedEstimatorComponentsPlotData;
		\addplot[mark=star] table[x index = 5, y index = 6] from \UnlimitedUnalignedEstimatorComponentsPlotData;
		\addplot[mark=o] table[x index = 9, y index = 10] from \UnlimitedUnalignedEstimatorComponentsPlotData;
		\addplot[mark=10-pointed star] table[x index = 13, y index = 14] from \UnlimitedUnalignedEstimatorComponentsPlotData;
\end{loglogaxis}
\end{tikzpicture}}
\caption{The behaviour of the estimator for the Kellogg problem (see \S\ref{sec:num:kellogg}) when the discontinuities in $\diff$ cannot align with any mesh in the sequence, using $\k=1$ and three different types of mesh, depicted in Figure~\ref{fig:num:Unaligned:mesh}.}
\label{fig:num:UnalignedUnlimited:errorPlots}
\end{figure}

\begin{figure}
	\subcaptionbox{The initial square mesh \label{fig:num:Unaligned:mesh:initialSquares}}[.3\textwidth]{%
		\includegraphics[width=.3\textwidth]{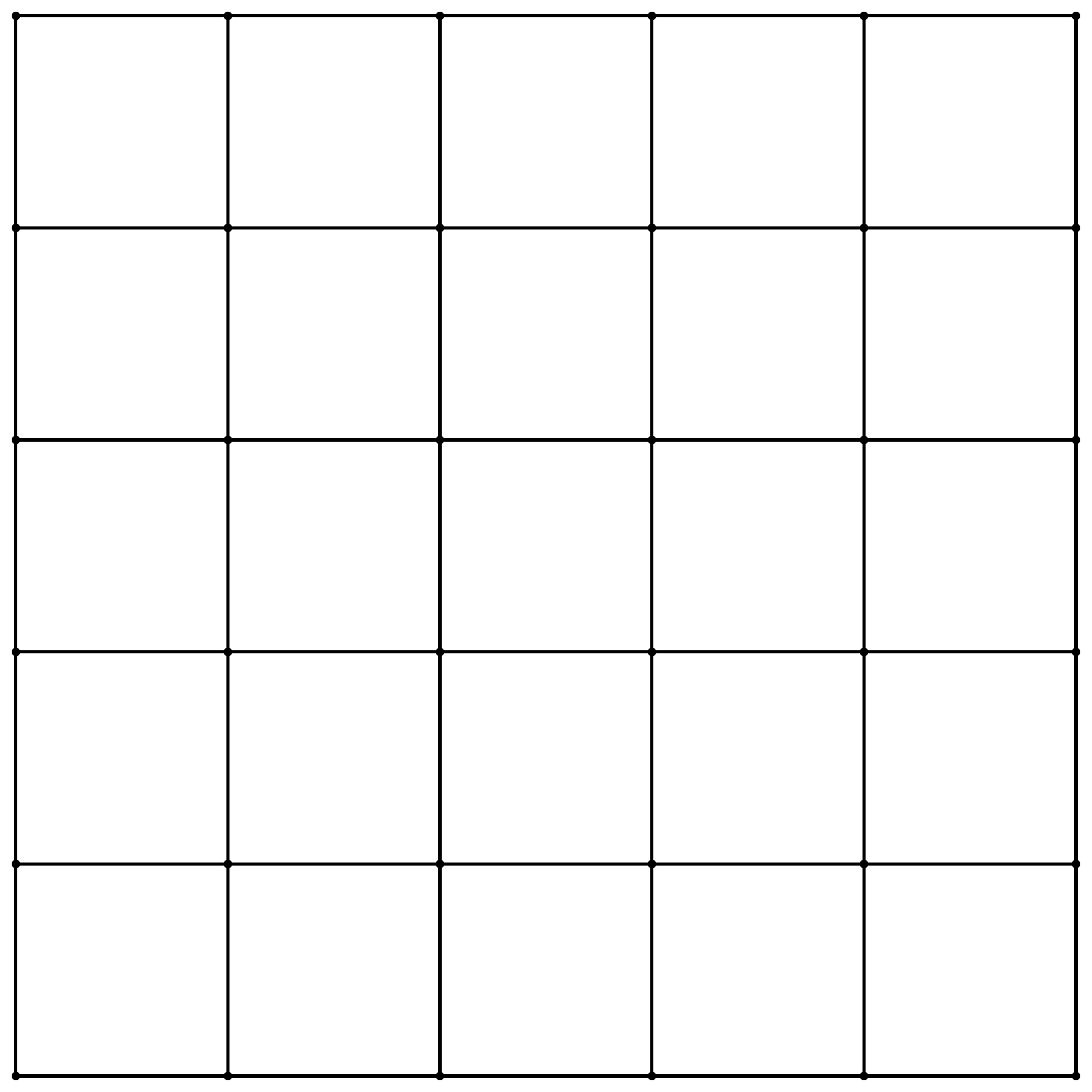}%
	}\hfill%
	\subcaptionbox{The initial Voronoi mesh \label{fig:num:Unaligned:mesh:initialVoronoi}}[.3\textwidth]{%
		\includegraphics[width=.3\textwidth]{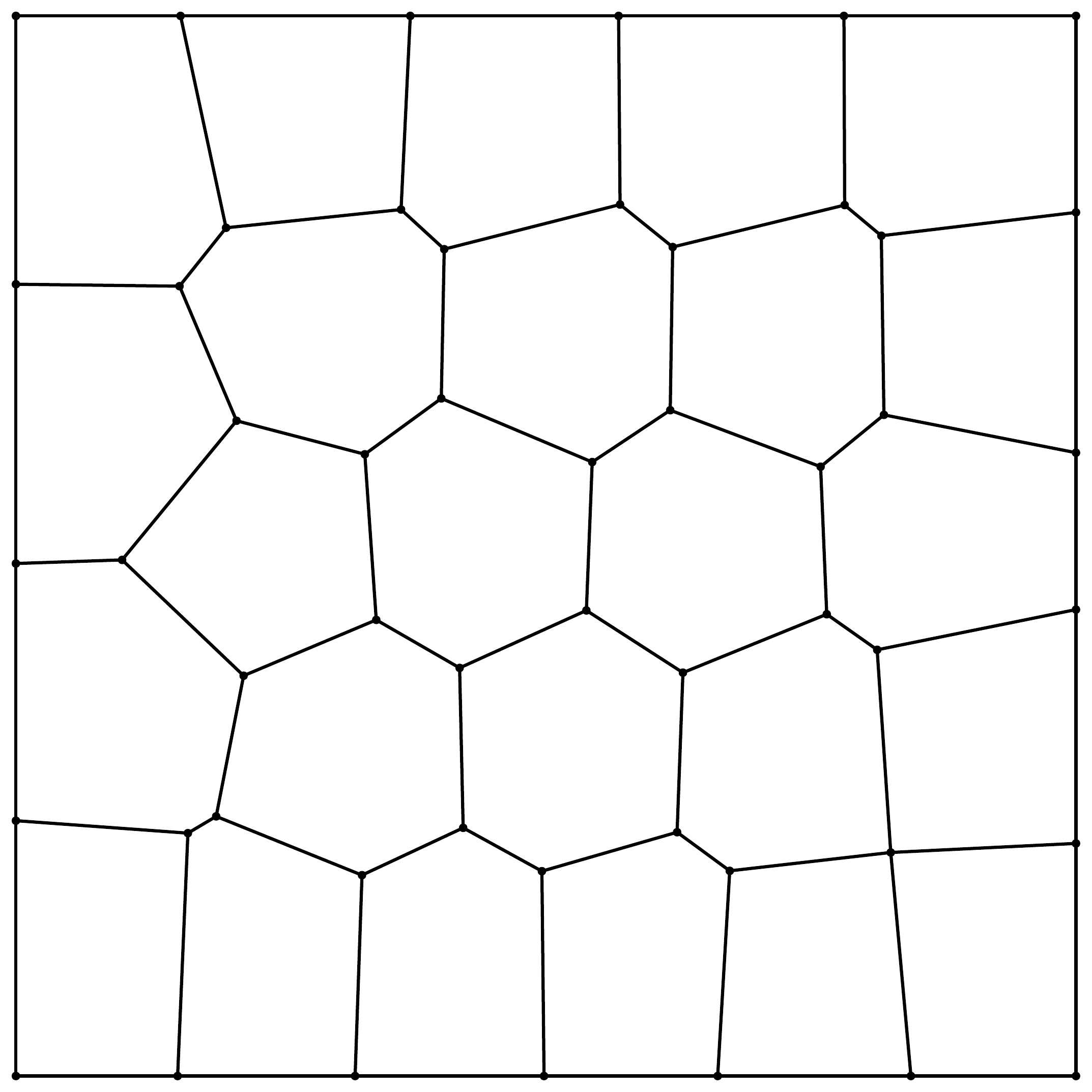}%
	}\hfill%
	\subcaptionbox{The initial randomised quadrilateral mesh \label{fig:num:Unaligned:mesh:initialRandomQuads}}[.3\textwidth]{%
		\includegraphics[width=.3\textwidth]{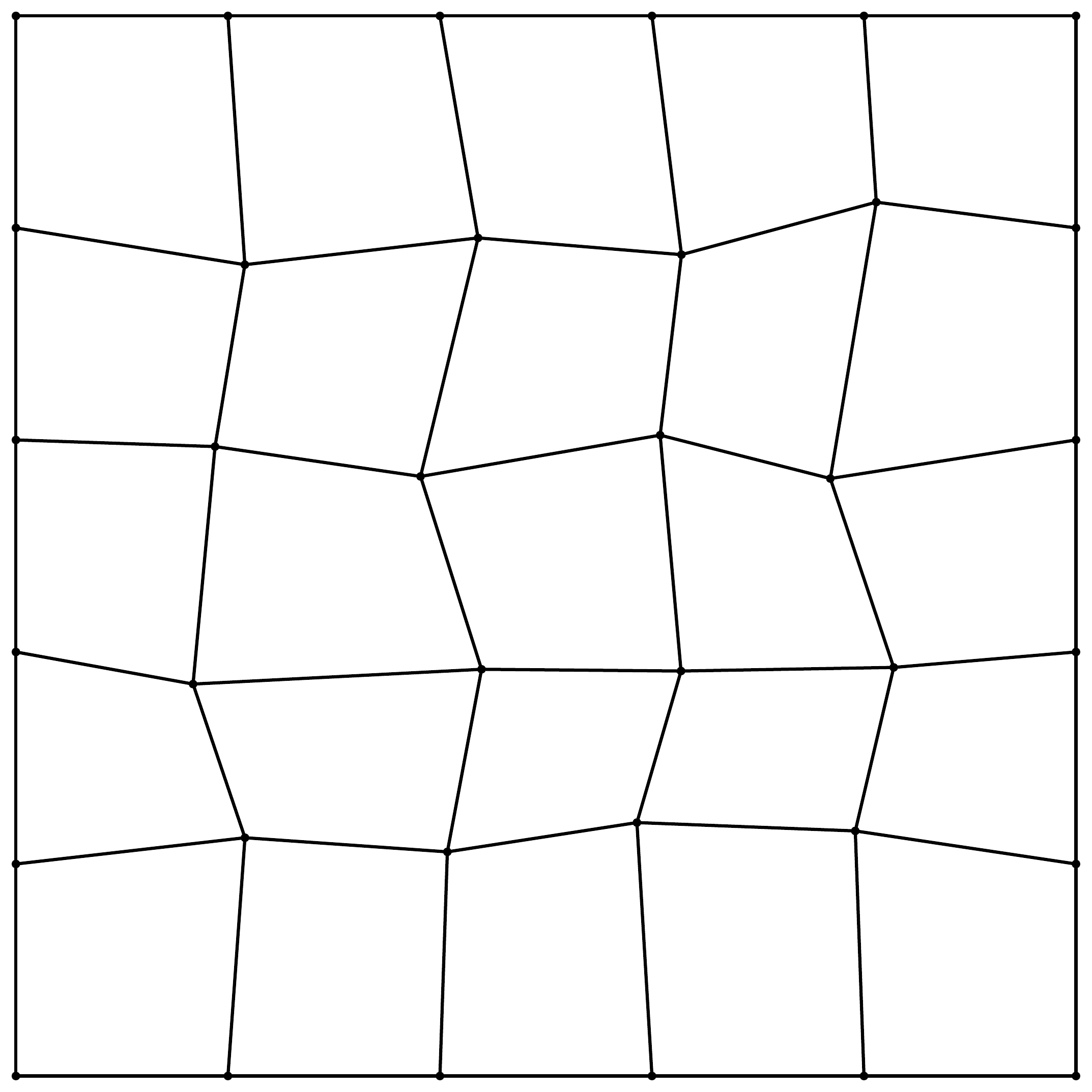}%
		}

	\subcaptionbox{The final square mesh with no limit on the number of hanging nodes \label{fig:num:Unaligned:unlimited:mesh:finalSquares}}[.3\textwidth]{%
		\includegraphics[width=.3\textwidth]{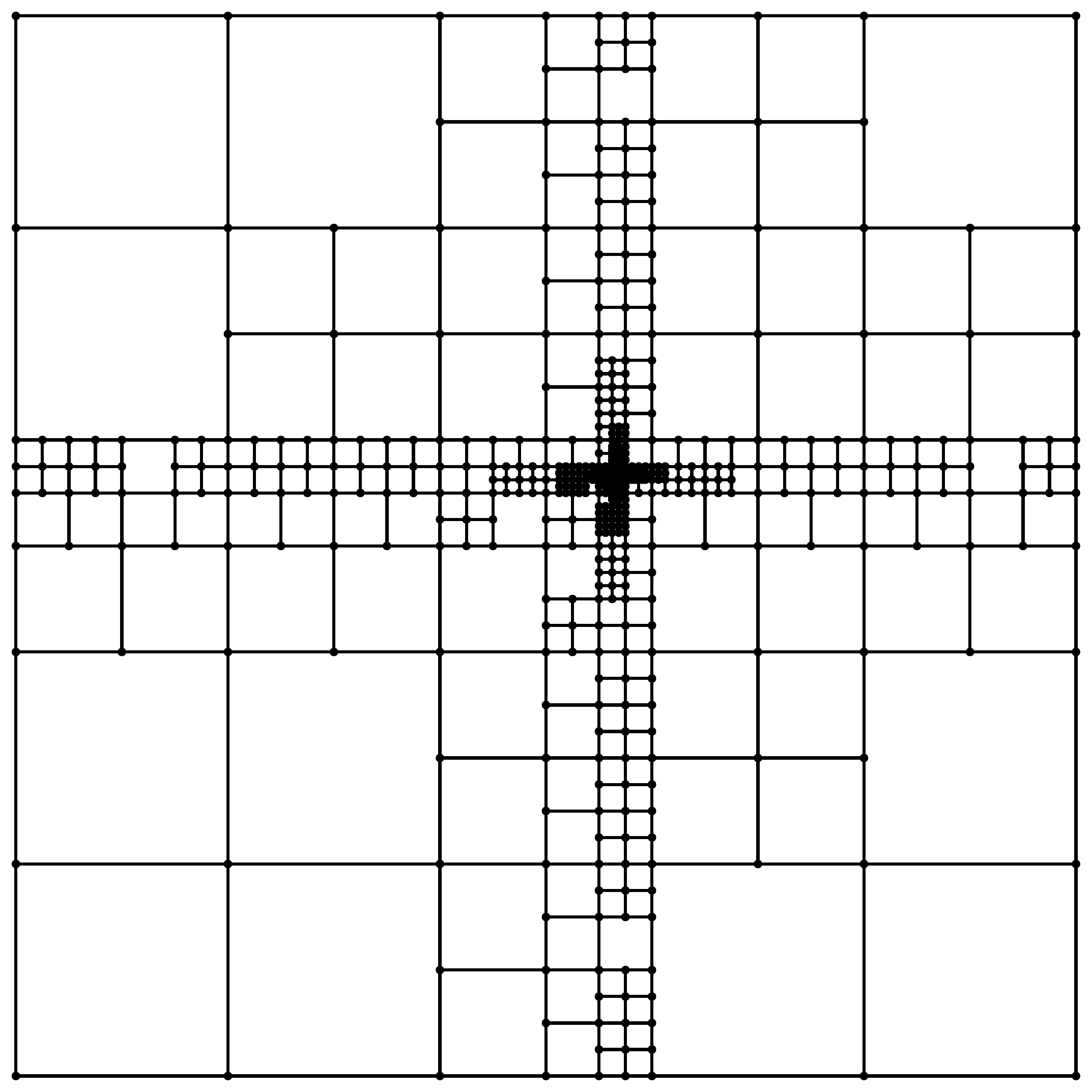}%
	}\hfill%
	\subcaptionbox{The final Voronoi mesh with no limit on the number of hanging nodes \label{fig:num:Unaligned:unlimited:mesh:finalVoronoi}}[.3\textwidth]{%
		\includegraphics[width=.3\textwidth]{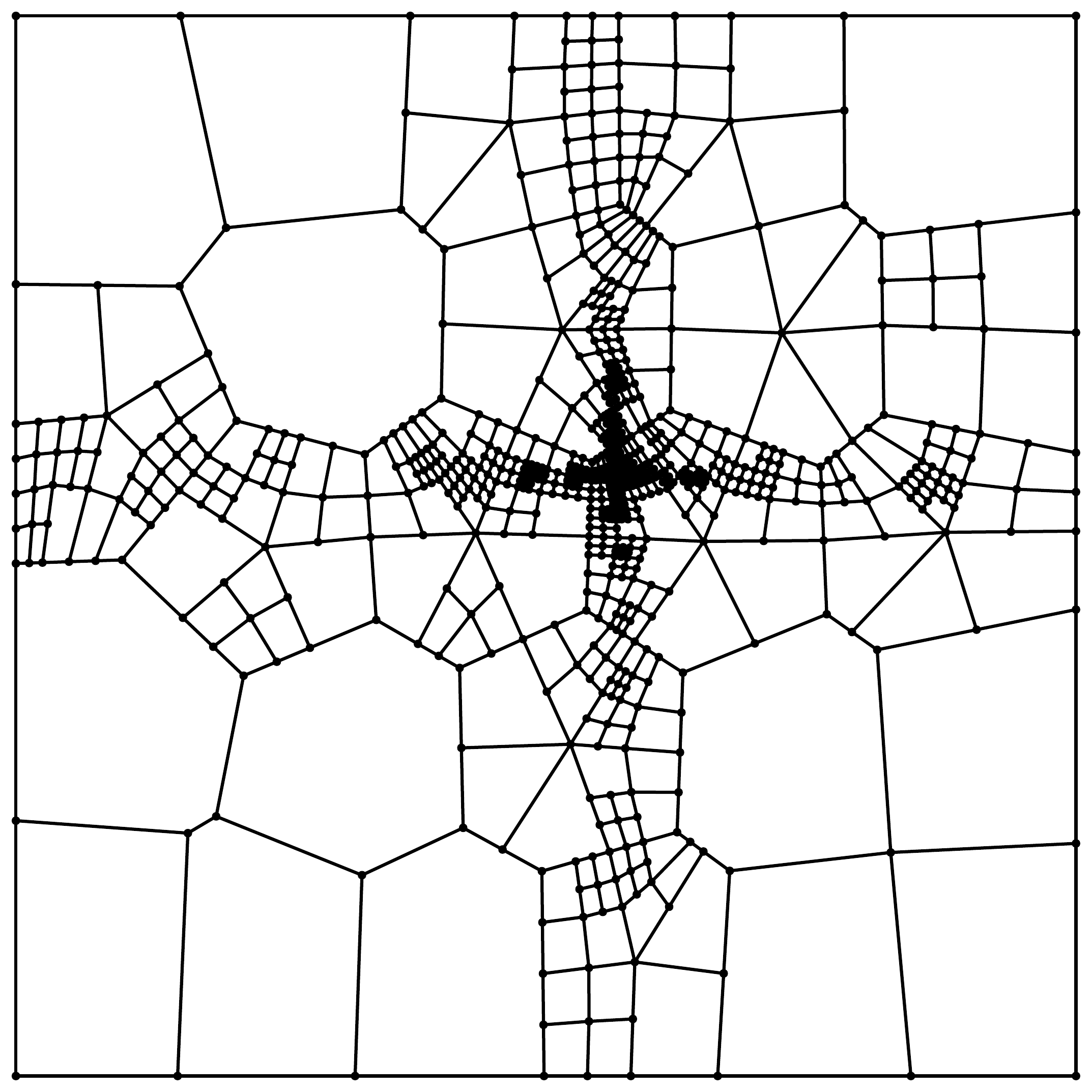}%
	}\hfill%
	\subcaptionbox{The final randomised quadrilateral mesh with no limit on the number of hanging nodes \label{fig:num:Unaligned:unlimited:mesh:finalRandomQuads}}[.3\textwidth]{%
		\includegraphics[width=.3\textwidth]{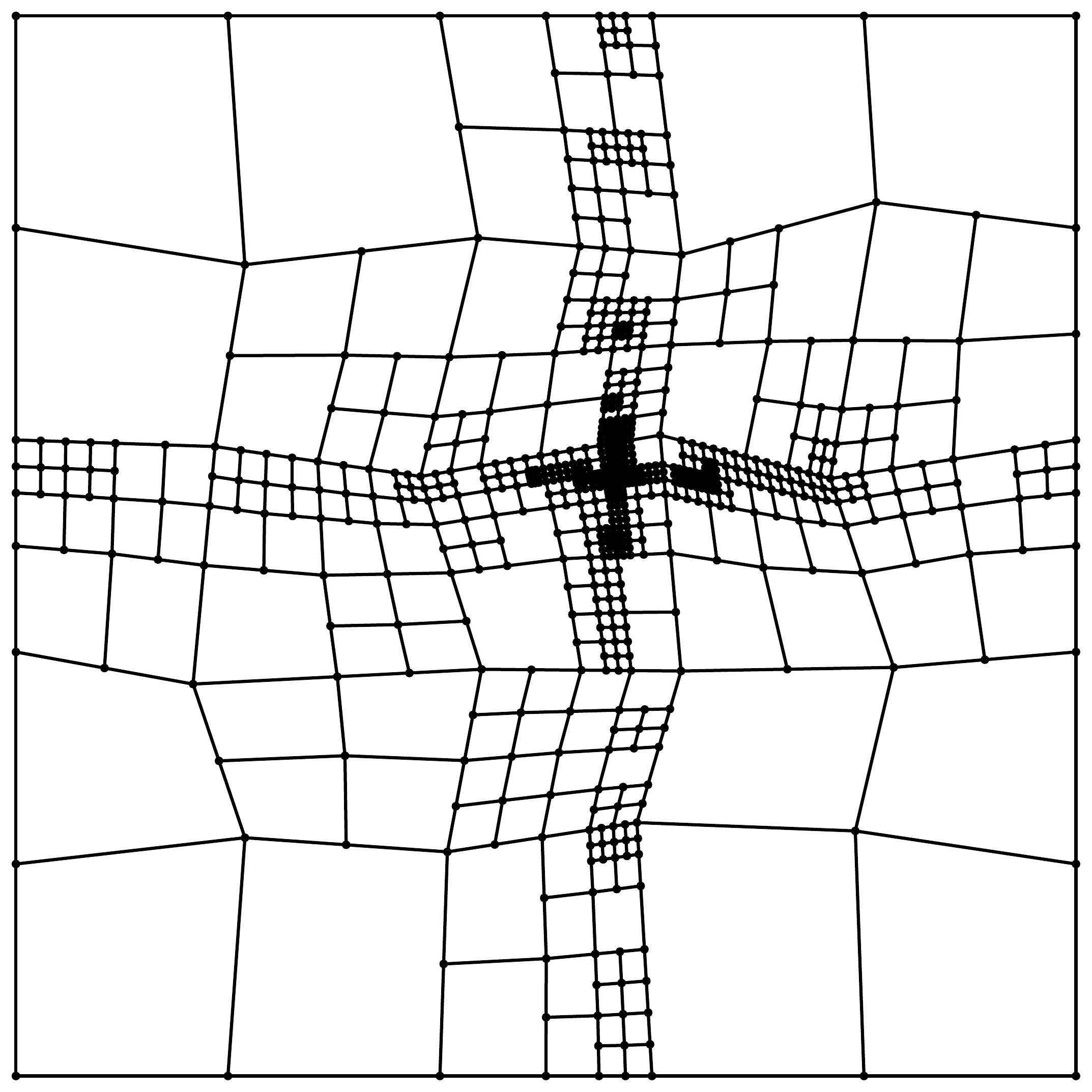}%
		}

	\subcaptionbox{The final square mesh with one hanging node per edge \label{fig:num:Unaligned:limited:mesh:finalSquares}}[.3\textwidth]{%
		\includegraphics[width=.3\textwidth]{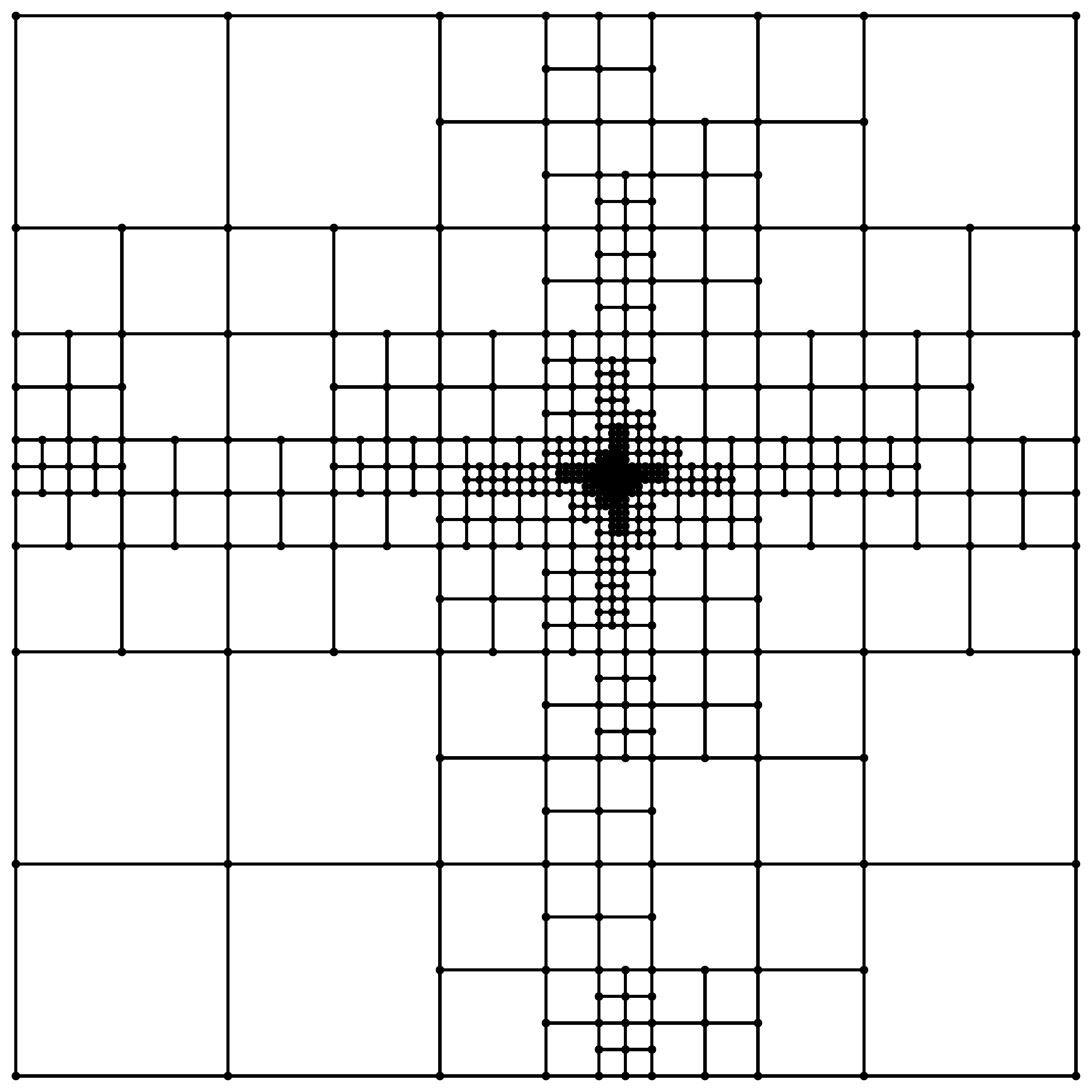}%
	}\hfill%
	\subcaptionbox{The final Voronoi mesh with one hanging node per edge \label{fig:num:Unaligned:limited:mesh:finalVoronoi}}[.3\textwidth]{%
		\includegraphics[width=.3\textwidth]{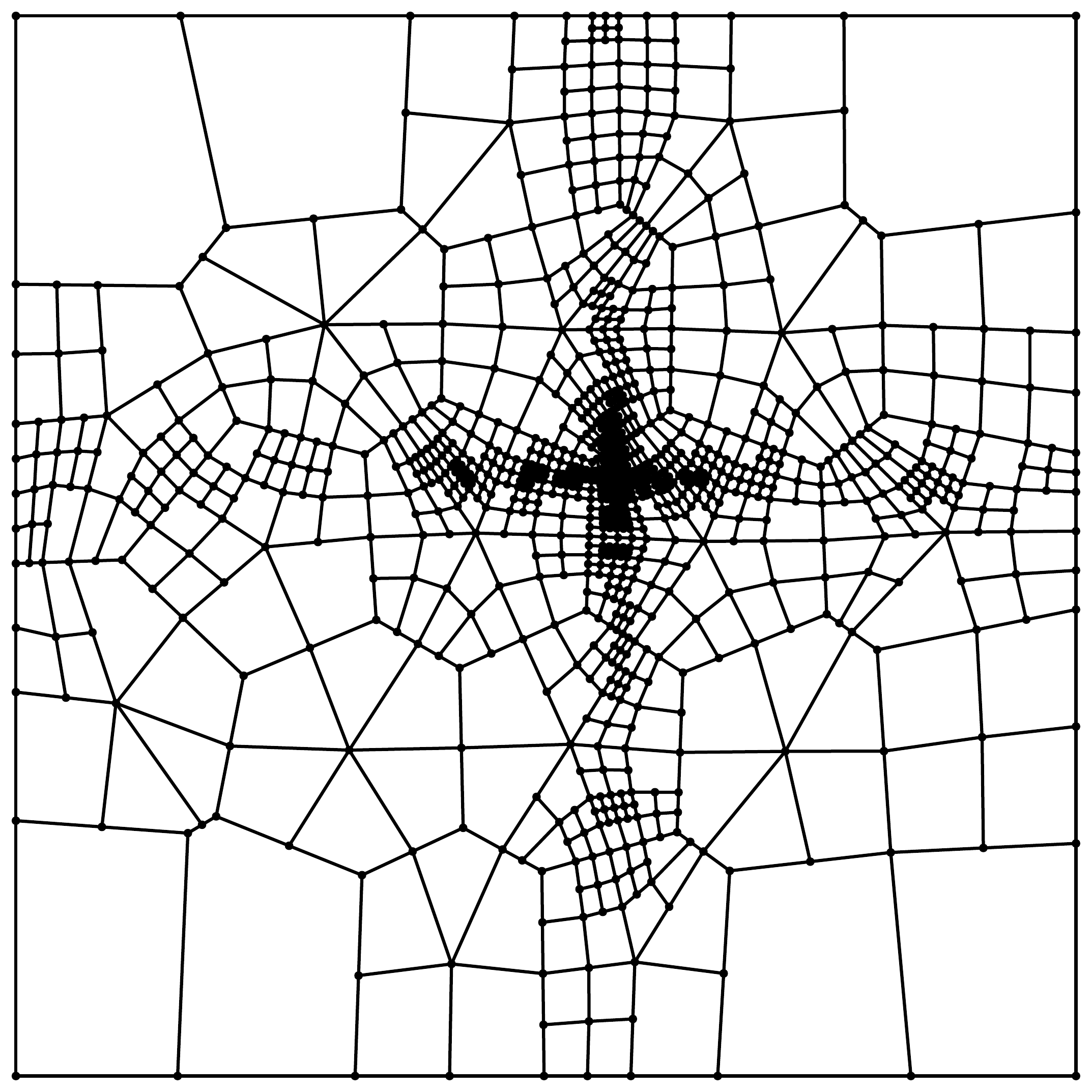}%
	}\hfill%
	\subcaptionbox{The final randomised quadrilateral mesh with one hanging node per edge \label{fig:num:Unaligned:limited:mesh:finalRandomQuads}}[.3\textwidth]{%
		\includegraphics[width=.3\textwidth]{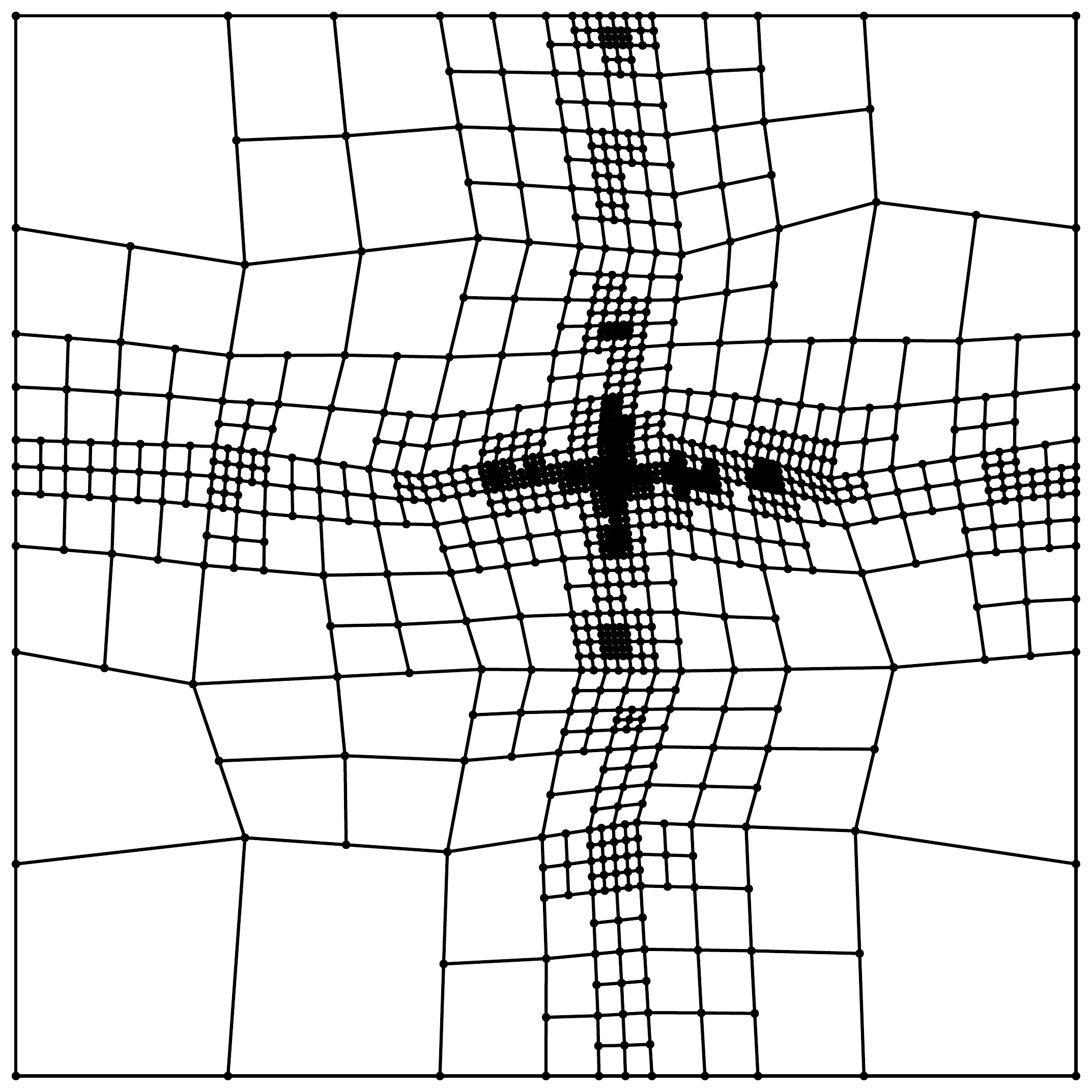}%
		}
\caption{The initial and final adapted meshes when solving the Kellogg problem (see \S\ref{sec:num:kellogg}) when the discontinuity in $\diff$ cannot align with any mesh in the sequence.}
\label{fig:num:Unaligned:mesh}
\end{figure}

\section{Conclusions and extensions}
\label{sec:conclusion}
We have derived and analysed a residual a posteriori error estimate for the $C^0$-conforming virtual element method of~\cite{UnifiedVEM} applied to general second order elliptic problems with nonconstant coefficients.
This analysis has given rise to a fully computable a posteriori error estimator which we have shown to be equivalent to the error between the true solution and the virtual element approximation, measured in the energy norm.
The analysis rests crucially on a new  Cl\'ement-type interpolation result.
We have also presented an extensive set of numerical results to demonstrate the behaviour of this estimator when used to drive an adaptive algorithm on a variety of problems using several families of meshes, consisting of general polygonal elements.

We stress that the analysis above can also be applied to other related virtual element formulations of the same problem subject to only minor modifications.
For instance, the same a posteriori analysis can be applied to the corresponding VEM obtained by discretising problem~\eqref{eq:origVariationalForm} directly, without splitting the differential operator into its symmetric and skew-symmetric parts.
The resulting local discrete bilinear form would take the form
\begin{align*}
	\AhE(\uh, \vh) := &(\diff \Po{\k-1} \nabla \uh, \Po{\k-1} \nabla \vh)_{\E} + (\conv \Po{\k-1} \nabla \uh, \Po{\k} \vh)_{\E} + \\ &+ (\reac \Po{\k} \uh, \Po{\k} \vh)_{\E} + \StaE((\Id - \Po{\k}) \uh, (\Id - \Po{\k}) \vh),
\end{align*}
(see~\cite{General} for a similar approach). The analysis would provide the same a posteriori error estimator presented in Theorem~\ref{thm:reliability}, but without the term $\virtualosc^{\E}_3$.

\section*{Acknowledgements}

AC was partially supported by the EPSRC (Grant EP/L022745/1).
OS was supported by an EPSRC Doctoral Training Grant.
All this support is gratefully acknowledged.

\bibliographystyle{acm}
\bibliography{references}

\end{document}